\newcommand{\Dlasso}{\mathcal{D}_{\epsilon, \delta}}
\newcommand{\PP}{{\mathbb{P}}}
\newcommand{\EE}{{\mathbb{E}}}
\newcommand{\FDP}{{\textnormal{FDP}}}
\newcommand{\TPP}{{\textnormal{TPP}}}
\newcommand{\fdp}{{\textnormal{fdp}^{\infty}}}
\newcommand{\tpp}{{\textnormal{tpp}^{\infty}}}
\newcommand{\fdpp}{{\textnormal{fdp}}}
\newcommand{\tppp}{{\textnormal{tpp}}}
\newcommand{\ts}{t^*}
\title{The Complete Lasso Tradeoff Diagram\footnote{To appear in the 34th Conference on Neural Information Processing Systems (NeurIPS 2020).}}
\author{%
	Hua Wang \and Yachong Yang \and Zhiqi Bu \and Weijie J.~Su\\[0.5em]
	University of Pennsylvania\\[0.4em]
	\texttt{\{wanghua, yachong, zbu, suw\}@upenn.edu} \\}
\date{October 21, 2020}
\begin{document}
	
	\maketitle

\begin{abstract}
	A fundamental problem in the high-dimensional regression is to understand the tradeoff between type I and type II errors or, equivalently, false discovery rate (FDR) and power in variable selection. To address this important problem, we offer the first \textit{complete} tradeoff diagram that distinguishes all pairs of FDR and power that can be asymptotically realized by the Lasso with some choice of its penalty parameter from the remaining pairs, in a regime of linear sparsity under random designs. The tradeoff between the FDR and power characterized by our diagram holds no matter how strong the signals are. In particular, our results improve on the earlier Lasso tradeoff diagram of \cite{su2017false} by recognizing two simple but fundamental constraints on the pairs of FDR and power. The improvement is more substantial when the regression problem is above the Donoho--Tanner phase transition. Finally, we present extensive simulation studies to confirm the sharpness of the complete Lasso tradeoff diagram.
\end{abstract}

\section{Introduction}
Consider a data matrix $\bX \in \mathbb{R}^{n\times p}$ with $p$ features and $n$ rows, and a response $\bm{y}\in\mathbb{R}^n$ from the standard linear model
\begin{equation}\label{eq:basic_model}
\bm{y}=\bm{X\beta}+\bm{z},
\end{equation}
where $\bm{z}\in\mathbb{R}^n$ is the noise term. In this paper, we study the Lasso, which, given a penalty parameter $\lambda>0$, finds the solution to the convex optimization problem \cite{lasso}:  
\begin{equation}
\label{eq:Lasso}
\widehat{\bm{\beta}}(\lambda)=\argmin_{\bm{b} \in \R^p} \frac{1}{2}\|\bm{y}-\bm{Xb} \|_2^2+\lambda \|\bm{b}\|_1.
\end{equation}
A variable $j$ is selected by the Lasso if $\widehat{\beta}_j(\lambda) \ne 0$ and the selection is false if the variable is a noise variable in the sense that $\beta_j = 0$. 

The Lasso is perhaps the most popular method in a high-dimensional setting where the number of features $p$ is very large and therefore sparse solutions are wished for. Here the sparsity, defined as $k = \#\{j:\beta_j\neq 0\}$, indicates that only $k<p$ out of the sea of all the explanatory variables are in effect and have non-zero regression coefficients. Owing to the sparsity of its solution, the Lasso is also widely recognized as a \textit{feature selection} tool in practice.

Therefore, one particularly interesting and important question is to understand how well the Lasso performs as a feature selector. The best-case scenario is that one can find a well-tuned $\lambda$ such that the Lasso estimator can discover all the true variables and only the true variables, resulting in zero type I error and full power. This is known as the \text{feature selection consistency}. In theory, when the signals are strong enough, and when the sparsity is small enough, consistency can be guaranteed asymptotically \cite{wainwright2009information}. In practice, nevertheless, the model selection consistency is usually a pipe dream: for example, even in a moderately sparse regime, consistency is impossible \cite{su2017false}, and thus the tradeoff between the type I error and the statistical power is unavoidable. Therefore, it is much more sensible to evaluate the performance of the Lasso not at some single "best" point (since there is no such point), but rather to focus on the entire Lasso path $\{\widehat{\bm\beta}(\lambda): \lambda\in(0,\infty)\}$ and evaluate the overall tradeoff between the type I error and the statistical power. To assess the quality of each selected model $\{1 \le j \le p: \widehat{\beta}_j(\lambda)\ne 0\}$ at some $\lambda$,  we use the false discovery proportion (FDP) and the true positive proportion (TPP) as measures. Formally, the FDP and TPP are defined as:
\begin{align}\label{eq:fdp_tpp_def}
\FDP(\lambda) = \frac{\#\{i:\widehat{\beta}_i(\lambda)\ne 0, \beta_i = 0\}}{\#\{i:\widehat{\beta}_i(\lambda)\ne 0\}},\quad
\TPP(\lambda) = \frac{\#\{i:\widehat{\beta}_i(\lambda)\ne 0, \beta_i \ne 0\}}{\#\{i:{\beta}_i\ne 0\}}.
\end{align}

In this paper, we characterize the exact region in the TPP--FDP diagram where the Lasso tradeoff curves locate. A complete theoretical study of such a diagram will surely enhance our understanding of the advantages and limitations of Lasso. It can be used to theoretically guide the data analysis procedure and explain why Lasso has good empirical performance in certain scenarios.

\subsection{Prior Art and Our Contribution}
In \cite{su2017false}, Su et al. proved that under the \textit{linear sparsity} regime, where the ratio $n/p$ is roughly constant, it is impossible for the Lasso to achieve the feature selection consistency and a tight lower boundary of the tradeoff curves was established.
Moreover, they showed that when $n/p$ is less than 1 (i.e., in the high-dimensional setting) and when the sparsity $k$ is large enough, the TPP of Lasso is always bounded away from 1, regardless of $\lambda$. This phenomenon is closely related to the Donoho--Tanner (DT) phase transition \cite{donoho2009observed,donoho2009counting,donoho2005neighborliness,donoho2006high,donoho2011noise,bayati2015universality}. The results in \cite{su2017false} can be visualized in the schematic plots in Figure \ref{fig:su2017}. However, the dichotomy offered by their lower boundary does not give a complete picture: it is clear that the red region is unachievable, yet the lower boundary says little about the entire ``Possibly Achievable'' region above it. This ambiguity is resolved in this paper.
\begin{figure}[ht]
\centering
\begin{subfigure}{0.33\textwidth}
\includegraphics[width=\linewidth,height=0.7\linewidth]{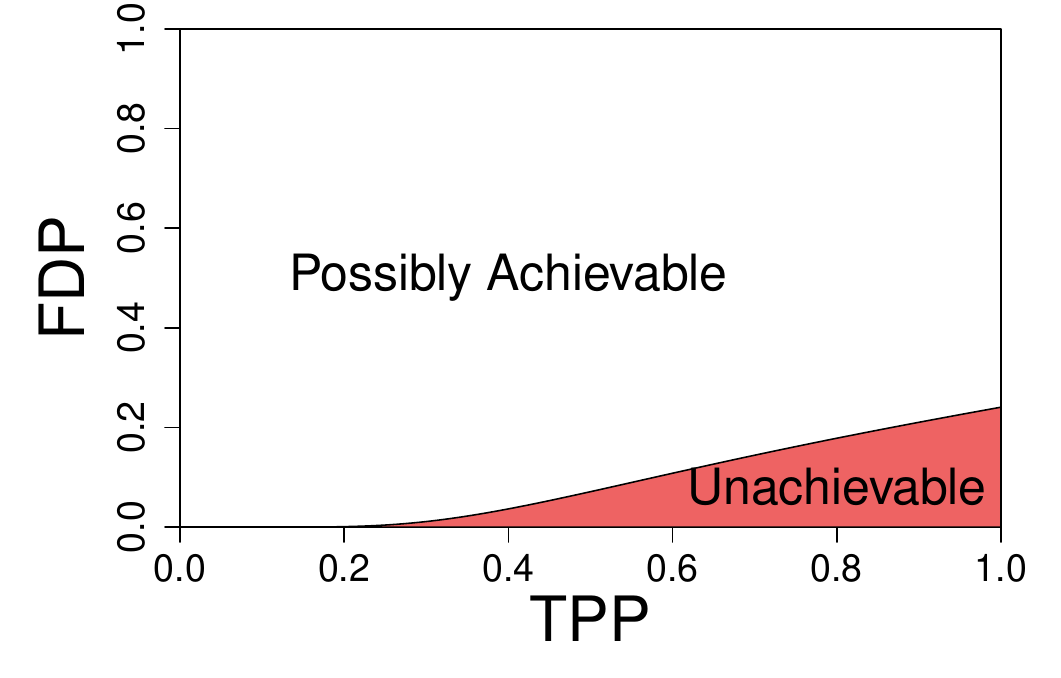}
\end{subfigure}
\hspace{-0.3cm}
\begin{subfigure}{0.33\textwidth}
\includegraphics[width=\linewidth,height=0.7\linewidth]{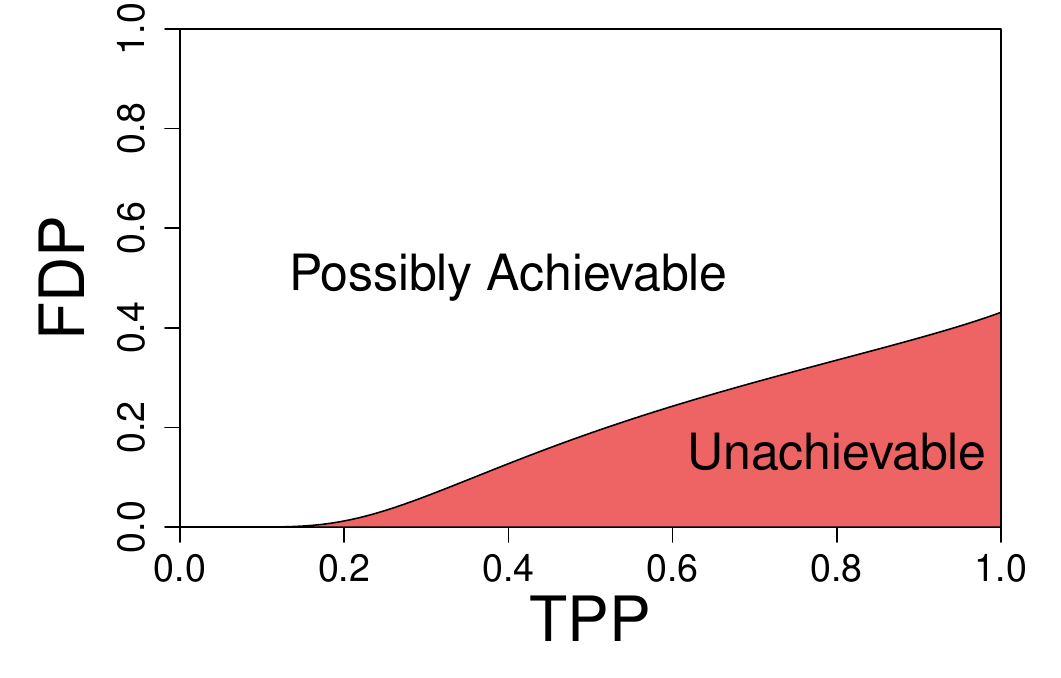}
\end{subfigure}
\hspace{-0.3cm}
\begin{subfigure}{0.33\textwidth}
\includegraphics[width=\linewidth,height=0.7\linewidth]{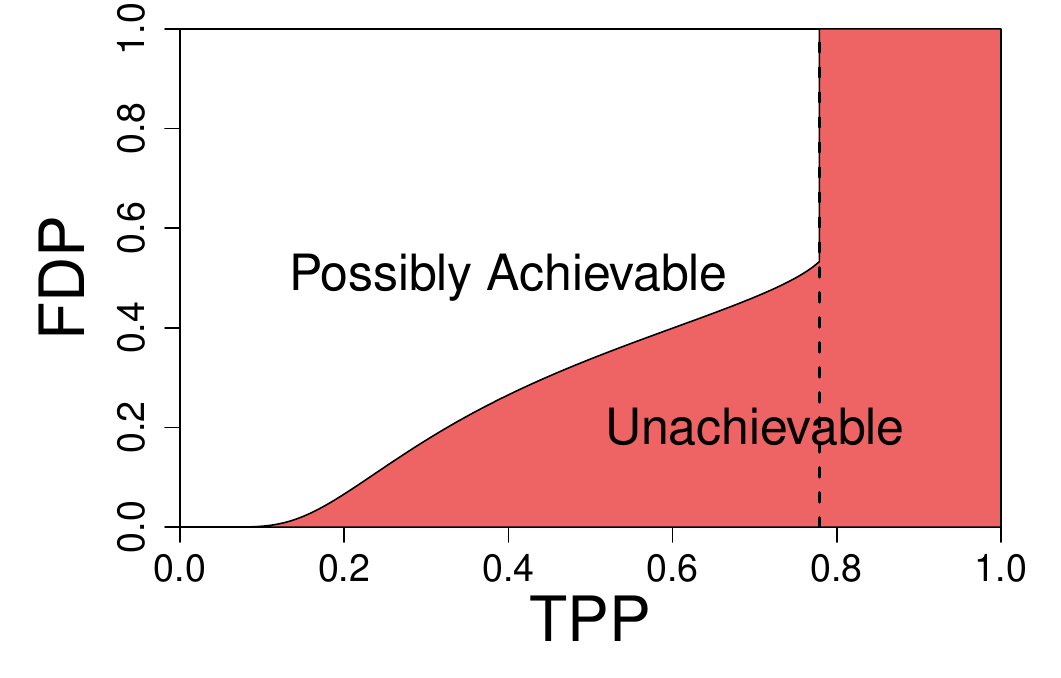}
\end{subfigure}
\caption{The dichotomy of achievability by the Lasso \cite{su2017false}. The sparsity ratio is fixed to $k/p=0.3$. Left: sampling ratio ($n/p$) is $1$ (Left), $0.7$ (Middle), and $0.5$ (Right). The right panel depicts the case above the DT phase transition, and the dashed line represents the upper limit of TPP.}
	\label{fig:su2017}
\end{figure}

In a more recent paper \cite{wang2020price}, the authors recognized a region in the TPP--FDP diagram termed as the ``Lasso Crescent'' in the \textit{noiseless} case, which is enclosed by sharp upper boundary and lower boundary. To gain more insights into those two boundaries, a new notion term as the ``effect size heterogeneity'' is proposed: while all the other conditions
remain unchanged, the more heterogeneous the magnitudes of the effect sizes are, the better the Lasso performs. As a result, the upper boundary (or the worst tradeoff curve) in the noiseless case is given by the homogeneous effect sizes, while the lower boundary (or the best tradeoff of TPP--FDP) is achieved asymptotically by the most heterogeneous effect sizes. Though the achievability is not the primary focus in \cite{wang2020price}, they partially refine the achievable region from the entire white region in Figure \ref{fig:su2017} to the Lasso Crescent. But their scenario is limited to the case without taking the DT phase transition into account, and it is still unclear whether the entire region enclosed in the Lasso Crescent is achievable or not.

In this paper, we study the exact achievable region, taking into account the cases below and above the DT phase transition. We depict the complete Lasso achievability diagrams in terms of the TPP--FDP tradeoff in all possible scenarios. On top of \cite{su2017false}, we specifically find three inherently different tradeoff diagrams as shown in Figure \ref{fig:dt}: We enclose the achievable region by exact boundaries, and notably, we identify two distinct sub-cases (Left and Middle panels of Figure \ref{fig:dt}) when it is below the DT phase transition, as opposed to the corresponding panels in Figure \ref{fig:su2017}. Our work provides a worthwhile understanding of the DT phase transition, in the sense that we consider not only the possible region of the TPP (i.e., power), but also the possible region of (TPP, FDP) jointly. 

To establish the theoretical results of our work, we leverage the powerful Approximate Message Passing (AMP) theory that has been developed recently \cite{donoho2013high, bayati2011dynamics,bayati2011lasso,donoho2009message}. We also use a homotopy argument to show the asymptotic achievability of every point within the claimed region.
\begin{figure}[ht]
	\centering
	\begin{subfigure}{.33\textwidth}
	\centering\includegraphics[width=\linewidth,height=0.7\linewidth]{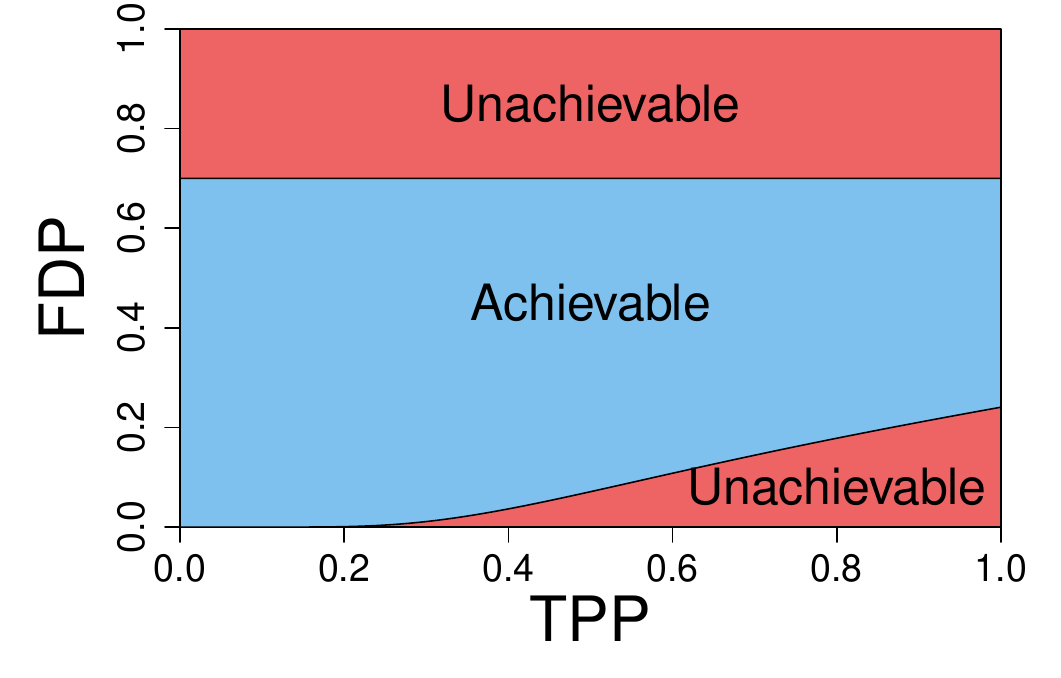}
	\end{subfigure}
	\hspace{-0.3cm}
	\begin{subfigure}{.33\textwidth}
	\centering\includegraphics[width=\linewidth,height=0.7\linewidth]{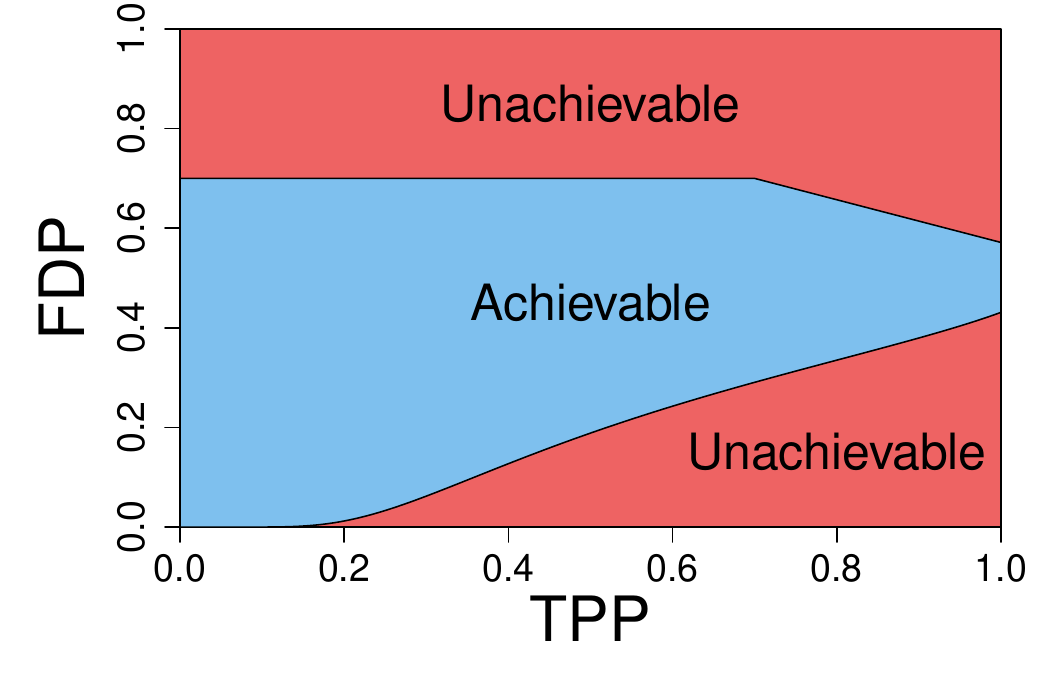}
	\end{subfigure}
	\hspace{-0.3cm}
    \begin{subfigure}{.33\textwidth}
	\centering\includegraphics[width=\linewidth,height=0.7\linewidth]{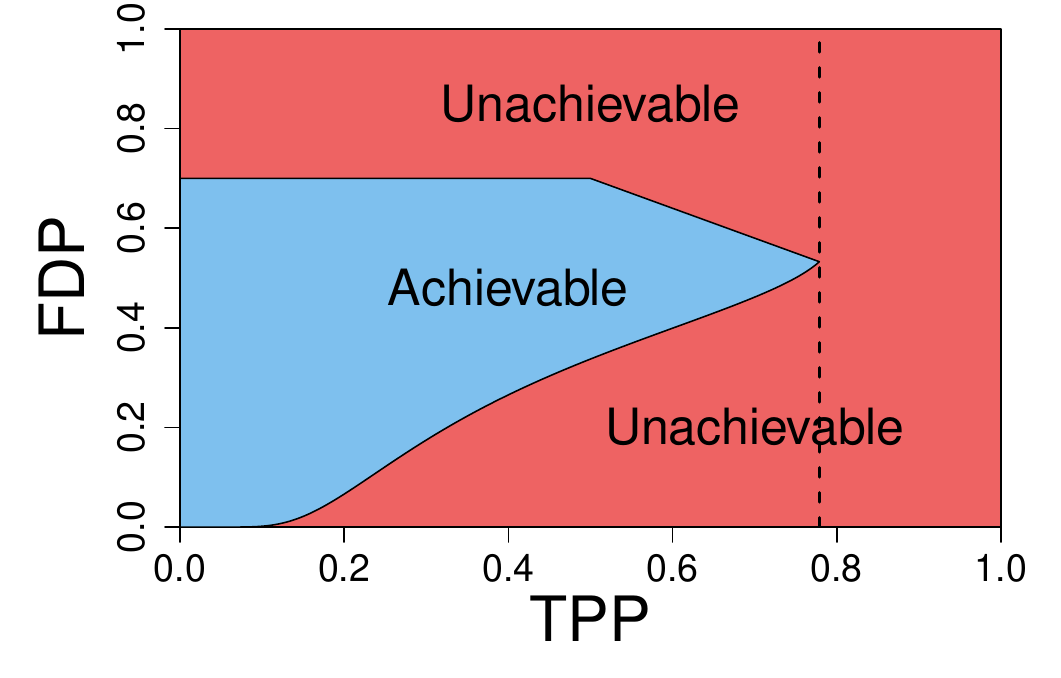}
	\end{subfigure}
    \caption{The complete Lasso tradeoff diagrams for the same setting as in Figure \ref{fig:su2017}. The sparsity ratio is fixed to $k/p=0.3$. The sampling ratio $n/p$ is $1$ (Left), $0.7$ (Middle), and $0.5$ (Right). The parameters are set exactly as in Figure \ref{fig:su2017}, but the diagram is now complete.}
	\label{fig:dt}
\end{figure}

\section{Main Results}\label{sec:Main}
We start presenting our results by laying out the working assumptions as follows. We assume that $\bX$ consists of i.i.d.~$\N(0,1/n)$ entries so that each column has approximately unit $\ell_2$ norm. As the index $l \goto \infty$, we assume $n_l,p_l \goto \infty$ with $n_l/p_l \rightarrow \delta$ in the limit for some positive constant $\delta \in(0,\infty)$. The index $l$ is often omitted for the sake of simplicity. 
As for the regression coefficients in the model (\ref{eq:basic_model}), we assume $\beta_1, \ldots, \beta_p$ are i.i.d.~copies of a random variable $\Pi$ that satisfies $\E \Pi^2 < \infty$. To model the sparse signals, we explicitly express $\Pi$ as a general mixture distribution, such that $\PP(\Pi\neq0)=\epsilon$ for some constant $\epsilon \in(0,1)$, and  $\Pi^\star$ is the distribution of $\Pi$ conditional on $\Pi\ne0$, which satisfies $\E \Pi^{\star2} <\infty$. 
The noise $z_i$ is i.i.d.~drawn from $\N(0, \sigma^2)$, where $\sigma \ge 0$ is any fixed constant. For the completeness of our definition, $\bX, \bbeta$, and $\bz$ are jointly independent.

From now on, we consider the tradeoff between the TPP and FDP as $\lambda$ varies in $(0, \infty)$, i.e., along the Lasso path. Notation-wise, we use $\fdpp$ and $\tppp$ to denote values in $[0,1]$, and reserve TPP and FDP for the corresponding random variables.

\subsection{The Complete Lasso tradeoff Diagram}
In this subsection, we characterize the feasible region of the Lasso on the TPP--FDP diagram. We provide a set of constraints that any Lasso path must satisfy, and further show that any point in the region defined by the constraints is achievable asymptotically.
We term such a region as the feasible region of Lasso.
\begin{definition}\label{def:2.1}
	For any $0<\epsilon<1$ and $\delta>0$, we define the feasible region of the Lasso $\mathcal{D}_{\epsilon, \delta}$ to be the set of (tpp, fdp) pairs that satisfy the following constraints: (1) $0\le$~tpp~$\le 1$;
(2) $0\le$~fdp~$\le 1-\epsilon$; (3) fdp~$\ge q^\star($tpp$)$; (4) $\frac{\epsilon}{\delta} $~tpp~$ + $~fdp~$ \le 1$.
\end{definition}
The function $q^\star$ in the above definition is a deterministic increasing function of $\epsilon, \delta$, which is defined in \eqref{eq:q_lower} in Appendix \ref{sec:app_1} and is first proposed in \cite{su2017false}. Recall $k$ is the sparsity $k:=\#\{i:{\beta}_i\ne 0\}$. We now state our main theorem. To avoid any confusion, we say that a point (tpp, fdp) is asymptotically achievable, if there exist some $\Pi$, $\sigma$ and a sequence of (possibly adaptive) $\{\lambda_l\}$, such that $\|(\TPP_l(\lambda_l), \FDP_l(\lambda_l))- $(tpp, fdp)$\|\overset{\P}{\to} 0$.\footnote{The $\TPP_l$ and $\FDP_l$ are the TPP and FDP calculated at a realization of design matrix $X\in \R^{n_l\times p_l}$, regression coefficients $\beta_i\overset{i.i.d}{\sim} \Pi$ and noise $z_i\sim \mathcal{N}(0, \sigma^2)$.}
\begin{theorem}[The complete Lasso tradeoff diagram]\label{thm:lasso_region}
	For any $0<\epsilon<1$ and $\delta>0$ , under the working assumptions, then the following holds: (a) Any Lasso tradeoff curve lies inside the region $\Dlasso$ asymptotically.  (b) Any point in $\Dlasso$ is asymptotically achievable by the Lasso.
\end{theorem}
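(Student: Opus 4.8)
The plan is to prove (a) by translating each of the four defining inequalities of $\Dlasso$ into a property of the Lasso path, and (b) by first realizing $\partial\Dlasso$ as a limit of tradeoff curves and then filling the interior by a homotopy over priors. Constraint (1) is immediate from \eqref{eq:fdp_tpp_def}, and constraint (3), $\fdpp\ge q^\star(\tppp)$, is the lower-boundary theorem of \cite{su2017false}, whose proof already accommodates a general prior with finite second moment and Gaussian noise of arbitrary variance, so I would simply invoke it. The new structural bound is (4), which needs no AMP machinery: for a design in general position --- which holds almost surely for i.i.d.\ Gaussian entries --- the Lasso solution has at most $\min(n,p)$ nonzero coordinates. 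Writing $V$ and $T$ for the numbers of false and true selections along the path, $V+T\le\min(n_l,p_l)$, while $T=\TPP_l\cdot k=(1-\FDP_l)(V+T)$, so $\TPP_l\cdot\frac{k}{\min(n_l,p_l)}+\FDP_l\le1$; letting $l\to\infty$ with $k/n_l\to\epsilon/\delta$ gives $\frac{\epsilon}{\delta}\tppp+\fdpp\le1$ when $\delta<1$, while for $\delta\ge1$ the same computation with $\min(n_l,p_l)=p_l$ yields the stronger $\epsilon\,\tppp+\fdpp\le1$, which implies (4).

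For constraint (2), $\fdpp\le1-\epsilon$, I would use the AMP characterization of the whole Lasso path \cite{bayati2011lasso,su2017false}: there exist path quantities $\alpha=\alpha(\lambda)>0$ and $\tau=\tau(\lambda)>0$ with
\[
\fdp(\lambda)=\frac{2(1-\epsilon)\Phi(-\alpha)}{2(1-\epsilon)\Phi(-\alpha)+\epsilon\,\tpp(\lambda)},\qquad
\tpp(\lambda)=\PP\!\big(|\Pi^\star+\tau W|>\alpha\tau\big),\ \ W\sim\Normal(0,1).
\]
Since $c\mapsto\PP(|c+W|\le\alpha)$ is maximized at $c=0$ (the interval $[-\alpha,\alpha]$ is then centered at the mean of $c+W$), one has $\PP(|c+W|>\alpha)\ge2\Phi(-\alpha)$ for every $c$, and averaging over $\Pi^\star/\tau$ gives $\tpp(\lambda)\ge2\Phi(-\alpha)$; substituting into the display bounds $\fdp(\lambda)\le\frac{2(1-\epsilon)\Phi(-\alpha)}{2(1-\epsilon)\Phi(-\alpha)+2\epsilon\Phi(-\alpha)}=1-\epsilon$. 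Since (1)--(4) are closed conditions and $(\TPP_l(\lambda_l),\FDP_l(\lambda_l))$ concentrate around $(\tpp(\lambda_l),\fdp(\lambda_l))$ uniformly enough to cover adaptive sequences $\{\lambda_l\}$, part (a) follows.

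For (b), observe that $\partial\Dlasso$ consists of the top segment $\{\fdpp=1-\epsilon\}$, the graph of $q^\star$, and --- when $\delta<1$ --- a sub-arc of the line $\frac{\epsilon}{\delta}\tppp+\fdpp=1$. The graph of $q^\star$ is approached by Lasso curves as the effect sizes diverge, as shown in \cite{su2017false}, so I would borrow it. For the top segment I would let $\Pi^\star$ converge weakly to a point mass at the origin: then $\tpp(\lambda)\to2\Phi(-\alpha(\lambda))$ while $\fdp(\lambda)\to1-\epsilon$ by the display, and $2\Phi(-\alpha(\lambda))$ sweeps $(0,\min(1,\delta))$ as $\lambda$ ranges over $(0,\infty)$, covering the top segment up to its meeting point with the line. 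For the remaining sub-arc I would send $\lambda\to0^+$, so that the solution selects $\min(n,p)$ variables and $(\tppp,\fdpp)$ lands exactly on $\frac{\epsilon}{\delta}\tppp+\fdpp=1$; with a prior that is a mixture placing mass $1-s$ at a very large atom (certainly selected) and mass $s$ at a vanishingly small atom (selected at the null rate), the landing point then moves continuously along that line as $s$ varies, from $(\min(1,\delta),1-\epsilon)$ to the Donoho--Tanner endpoint.

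To reach an interior point I would connect the strong-signal priors (tracing $q^\star$) to the weak-signal priors (tracing $\fdpp=1-\epsilon$) through a one-parameter family $\Pi_s$, $s\in[0,1]$ --- for instance the two-point mixture above with the atom locations also permitted to drift --- and argue that the curves $C(\Pi_s)$ vary continuously in $s$ and together sweep out $\Dlasso$: for a fixed target abscissa $\tppp=t$, the ordinates at which the $C(\Pi_s)$ meet $\{\tppp=t\}$ move continuously from $\approx q^\star(t)$ up to the top boundary as $s$ goes from $0$ to $1$, so by the intermediate value theorem every admissible ordinate is attained. This continuity-and-coverage step is the technical heart and the place I expect the main difficulty: one must show the AMP fixed-point system admits solutions depending jointly continuously and monotonically on $(\lambda,s)$ across the full range --- including the degenerate limits ($\lambda\to0$, strong/weak signals) and the regime above the Donoho--Tanner transition, where the path terminates at a positive $\lambda_{\min}$ and the vertical slice of $\Dlasso$ pinches to a point --- after which, the boundary being covered, a topological degree (or boundary-tracing) argument certifies that the image of $(s,\lambda)\mapsto(\tppp,\fdpp)$ has no gaps. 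Everything else reduces to the uniform concentration of $(\TPP_l,\FDP_l)$ about the AMP predictions already used for part (a).
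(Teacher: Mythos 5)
Your proposal is correct and follows essentially the same architecture as the paper's proof: for (a), verify each of the four constraints on the AMP limits $(\tpp,\fdp)$; for (b), realize the boundary of $\Dlasso$ as a limit of Lasso tradeoff curves and then fill the interior by a homotopy/degree argument. Two small stylistic differences are worth noting. For constraint (4), you observe the exact identity $\TPP_l\,k/D+\FDP_l=1$ with $D$ the number of selections, so that $D\le\min(n,p)$ immediately gives the bound; the paper instead writes $\frac{\epsilon}{\delta}\TPP+\FDP$ as a function $f_{V,n}(D)$ and maximizes over $D\in[V,n]$ by checking endpoints --- your identity is arguably cleaner but yields the same conclusion. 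For (b), you propose a homotopy over a one-parameter family of priors (two-point mixtures with a drifting weight $s$), whereas the paper fixes the heterogeneous prior sequence $\Pi^{(m)}$ and sweeps the noise level $\sigma$ from $0$ to $\infty$; the paper itself remarks that $\sigma\to\infty$ is ``effectively $\Pi^\star\to 0$,'' so these are two parametrizations of the same deformation. One caution on your proposed vertical-slice intermediate-value step: it would implicitly require the ordinates at each fixed abscissa to move monotonically (or at least to sweep an interval), which is more than is strictly needed. The paper sidesteps this by invoking the Jordan--Schoenflies theorem together with the homotopy invariance of the winding number (its Lemma~\ref{lem:homotopy}), which only needs the boundary map to be continuous and to trace a simple closed curve --- and you do hedge correctly toward this degree-theoretic version in your final sentence, which is exactly the argument the paper uses.
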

Theorem \ref{thm:lasso_region} provides a complete characterization of the location of the Lasso solution on the TPP--FDP diagram. 
It can be seen from this theorem that the region $\Dlasso$ is essentially the union of \textit{all} the Lasso paths asymptotically: 
$$
\Dlasso = \Big\{(t, f): (t, f)~\text{ is asymptotically achievable by some } \Pi, \sigma \text{ and } \{\lambda_l\} \Big\}.
$$
We pause here to explain the intuition behind the constraints that define $\Dlasso$: 
constraint (1) directly comes from the definition;
constraint (2) is from the fact that the Lasso outperforms the random guess, and hence selects no more than $1-\epsilon$ fraction of false signals; 
constraint (3) is from the main result from \cite[Theorem 2.1]{su2017false};
constraint (4) comes from the fact that the number of selected variables do not exceed $n$.

In Figure \ref{fig:dt}, we plot three different cases of the Lasso tradeoff diagrams with different $\delta$ and $\epsilon$, where the region $\Dlasso$ is marked in blue. The difference between these diagrams comes from different active constraints defining $\Dlasso$. 

We rewrite the boundary lines of the constraints (2) and (4) of Definition \ref{def:2.1} as
\begin{align}
    l_1:& \quad \text{fdp} = 1-\epsilon,\quad\quad 
    l_2: \quad \text{fdp}=1-\frac{\epsilon}{\delta}\text{tpp},
    \label{eq:l}
\end{align}
and then we can describe the three tradeoff diagrams accordingly.

	Case 1: When $\delta\ge 1$ and $0\le \epsilon\le 1$, $l_2$ is always above $l_1$ in the interval $(0,1)$, and thus only the constraint from $l_1$ is active. We note that this case is below the DT phase transition. This case corresponds to the left plot in Figure \ref{fig:dt}.

	Case 2: When $\delta<1$ and $\epsilon$ is sufficiently small, $l_1$ and $l_2$ intersect. Hence both constraints from $l_1$ and $l_2$ are active. However, $l_2$ is always above  and thus never intersects with $q^\star$ on $(0,1)$. Similar to Case 1, it is below the DT phase transition. This case corresponds to the middle plot of Figure \ref{fig:dt}. 

	Case 3: When $\delta<1$ and $\epsilon$ is sufficiently large, again $l_1$ and $l_2$ intersect, and $l_2$ also intersects with $q^\star$ within $(0,1)$. We observe the DT phase transition. This case corresponds to the right plot of Figure \ref{fig:dt}. Surprisingly, the maximum TPP achievable by the Lasso is exactly where $l_2$ and $q^\star$ intersect.

In particular, when $\delta <1$ (i.e., Case 2 or 3), consider the following equation in $t$ \cite[Equation (C.5)]{su2017false},
\begin{align}\label{eq:eps_star}
2(1-\epsilon)\left[\left(1+t^{2}\right) \Phi(-t)-t \phi(t)\right]+\epsilon\left(1+t^{2}\right)=\delta.
\end{align}
There is a unique positive $\epsilon^\star$ such that when $\epsilon = \epsilon^\star$, (\ref{eq:eps_star}) has a unique positive root in $t$. When $\epsilon>\epsilon^\star$ there is no positive root $t$. This $\epsilon^\star$ is known as the DT phase transition point. With $\epsilon^\star$, we can define the maximum achievable TPP as in  \cite[Lemma C.2]{su2017false}
\begin{equation}\label{eq:u_star}
u^\star(\delta, \epsilon):=\left\{\begin{array}{ll}{1-\frac{(1-\delta)\left(\epsilon-\epsilon^{\star}\right)}{\epsilon\left(1-\epsilon^{\star}\right)},} & {\delta<1 \text { and } \epsilon>\epsilon^{\star}(\delta),} \\ {1,} & {\text { otherwise. }}\end{array}\right.
\end{equation}

When $\epsilon\le\epsilon^\star$, Lasso can have power arbitrarily close to $1$, which falls into Case 2 in our discussion above. When $\epsilon>\epsilon^\star$, the power is at most $u^\star$ and bounded away from 1, and this corresponds to Case 3. We can show in the following lemma an equivalent characterization of the DT phase transition.
\begin{lemma}\label{lem:intersection}
	The curve $q^\star$ intersects with $l_2$ in $(0, 1)$ if and only if $\epsilon> \epsilon^\star(\delta)$, and in the intersecting scenario, the intersection point is at $\TPP = u^\star$.
\end{lemma}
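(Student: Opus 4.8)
The plan is to reduce Lemma \ref{lem:intersection} to a direct analysis of the two explicit curves $q^\star$ and $l_2$, using the characterization of $\epsilon^\star$ via equation \eqref{eq:eps_star} and the formula \eqref{eq:u_star} for $u^\star$. First I would recall the precise definition of $q^\star(\mathrm{tpp})$ from \cite{su2017false}: it is built from the level sets of the state-evolution/Lasso fixed-point equations, and in particular $q^\star(u)$ is expressed through an auxiliary threshold $t = t(u)$ solving a scalar equation, with $q^\star(u) = \frac{2(1-\epsilon)\Phi(-t)}{2(1-\epsilon)\Phi(-t) + \epsilon u}$ (the false-discovery fraction at the least-favorable prior). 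The key observation is that the same scalar quantity appearing in the definition of $q^\star$ — namely the ``effective sparsity/sampling'' balance $2(1-\epsilon)[(1+t^2)\Phi(-t) - t\phi(t)] + \epsilon(1+t^2)$ — is exactly the left-hand side of \eqref{eq:eps_star}. So the strategy is to show that $q^\star$ and $l_2$ meet precisely when this balance equals $\delta$ at the relevant threshold, which is the defining condition of $\epsilon^\star$.

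Concretely, I would proceed in three steps. Step 1: write both curves in terms of the threshold parameter $t$. Along $q^\star$, as $\mathrm{tpp}$ ranges over $(0,1)$ the companion threshold $t$ ranges over an interval, and both $\mathrm{tpp}$ and $q^\star(\mathrm{tpp})$ become explicit functions of $t$; similarly $l_2$ reads $\mathrm{fdp} = 1 - \frac{\epsilon}{\delta}\mathrm{tpp}$. Step 2: set $q^\star(\mathrm{tpp}) = 1 - \frac{\epsilon}{\delta}\mathrm{tpp}$ and simplify. After clearing denominators and substituting the expression $\mathrm{tpp} = \mathrm{tpp}(t)$, I expect the equation to collapse — after cancellation — exactly to \eqref{eq:eps_star}, i.e.\ the intersection condition becomes ``$t$ is a positive root of $2(1-\epsilon)[(1+t^2)\Phi(-t)-t\phi(t)]+\epsilon(1+t^2)=\delta$''. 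Step 3: invoke the stated properties of \eqref{eq:eps_star}: the left-hand side, as a function of $\epsilon$ for fixed admissible $t$, is monotone, and a positive root in $t$ exists iff $\epsilon > \epsilon^\star(\delta)$ (with the boundary case $\epsilon = \epsilon^\star$ giving a unique root, which however corresponds to a tangency/endpoint rather than an interior crossing). This yields the ``if and only if'' in $(0,1)$. Finally, at that root $t$, plug back into $\mathrm{tpp}(t)$ and check algebraically that $\mathrm{tpp}(t) = u^\star(\delta,\epsilon)$; this should match \eqref{eq:u_star} because $u^\star$ in \cite[Lemma C.2]{su2017false} is derived from exactly the same fixed-point being pushed to the power ceiling, so the two expressions coincide.

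The main obstacle I anticipate is the bookkeeping in Step 2: one must correctly unwind the definition of $q^\star$ (which is a minimum over admissible thresholds, so one has to argue that on the relevant branch the minimizer is interior and smooth, and that $\mathrm{tpp}(t)$ is strictly monotone so the change of variables is a bijection), and then verify that the seemingly different algebraic expressions for ``$q^\star$ meets $l_2$'' and ``\eqref{eq:eps_star} has a positive root'' are genuinely the same after simplification rather than merely consistent at isolated points. A secondary subtlety is handling endpoints: I need to rule out a spurious ``intersection'' at $\mathrm{tpp}=0$ or $\mathrm{tpp}=1$ and confirm the crossing, when it exists, is strictly interior to $(0,1)$ — this is where the strict inequality $\epsilon>\epsilon^\star$ (versus $\epsilon \ge \epsilon^\star$) enters. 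Once the change of variables and the endpoint analysis are pinned down, the identification with $u^\star$ is a routine substitution.
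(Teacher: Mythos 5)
Your overall strategy — pass to the threshold parameter $t$, write both curves explicitly, eliminate the free parameter, and identify the resulting scalar equation — is a reasonable and in fact viable route, and it is broadly similar in spirit to what the paper does (the paper verifies the claimed intersection point $u'$ directly using the closed-form relations $\delta = \frac{2\phi(t^\star)}{2\phi(t^\star)+t^\star(1-2\Phi(-t^\star))}$ and $\epsilon^\star = \frac{2\phi(t^\star)-2t^\star\Phi(-t^\star)}{2\phi(t^\star)+t^\star(1-2\Phi(-t^\star))}$ via its Lemma \ref{lem:t_star_fact}, rather than deriving $u'$ from scratch). However, there are two concrete errors in your execution that break the argument.

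First, your Step~2 conjecture is wrong: the intersection equation does \emph{not} collapse to \eqref{eq:eps_star}. Setting $q^\star(u)=1-\frac{\epsilon}{\delta}u$ with $q^\star(u)=\frac{2(1-\epsilon)\Phi(-t)}{2(1-\epsilon)\Phi(-t)+\epsilon u}$ gives, after clearing denominators, the simple relation $2(1-\epsilon)\Phi(-t)+\epsilon u = \delta$. Substituting this expression for $u$ into \eqref{eq:t_lower} and simplifying, all of the $\epsilon$-dependence cancels and one is left with $\delta\, t\,(1-2\Phi(-t)) = 2(1-\delta)\phi(t)$, i.e.\ precisely the first equation in \eqref{eq:esp_star_imp}, $\delta = \frac{2\phi(t)}{2\phi(t)+t(1-2\Phi(-t))}$. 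This is a condition on $t$ and $\delta$ alone; when $\delta<1$ it always has a unique positive root $t^\star(\delta)$, regardless of $\epsilon$. It is not equivalent to requiring that \eqref{eq:eps_star} (with your given $\epsilon$) have a positive root.

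Second, and as a consequence, the logic in your Step~3 does not hold up. You assert that ``a positive root in $t$ [to \eqref{eq:eps_star}] exists iff $\epsilon>\epsilon^\star(\delta)$,'' but the paper states the opposite: when $\epsilon>\epsilon^\star$ there is \emph{no} positive root of \eqref{eq:eps_star}. More importantly, the existence of a positive $t$-root is simply not the criterion for the intersection to lie in $(0,1)$. The correct criterion is whether the associated abscissa $u' = \frac{\delta - 2(1-\epsilon)\Phi(-t^\star)}{\epsilon}$ lies in $(0,1)$; plugging in $\Phi(-t^\star)=\frac{\delta-\epsilon^\star}{2(1-\epsilon^\star)}$ (part (2) of Lemma \ref{lem:t_star_fact}) gives $u' = 1-\frac{(1-\delta)(\epsilon-\epsilon^\star)}{\epsilon(1-\epsilon^\star)}$, which is $<1$ iff $\epsilon>\epsilon^\star$, and which then matches $u^\star$ from \eqref{eq:u_star}. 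Your ``endpoint'' discussion gestures in this direction but misdiagnoses the issue as one about roots of \eqref{eq:eps_star} rather than about whether $u'$ falls inside the unit interval.
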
 
This lemma provides an alternative view of the DT phase transition: the setting is above the DT phase transition if and only if the curve $q^\star$ and line $l_2$ intersect with each other. From this perspective, the maximum power is not a magic output of some mysterious machine, but is indeed the power when exactly $n$ discoveries are made by Lasso when it is on the lower boundary $q^\star$. The proof of this lemma can be found in Appendix \ref{sec:app_1}. We refer to Appendix \ref{sec:app_3} for more discussion on comparisons of our results with existing results.

\section{Proofs}

In this section, we establish part (a) of Theorem \ref{thm:lasso_region}. To this end, we first show the fact that under our assumptions, the TPP and FDP have uniform limits. As will be clear in subsection \ref{sec:3.1}, it is not hard to check that such limits of pairs (TPP, FDP) are indeed within $\Dlasso$.

To prove part (b) of Theorem \ref{thm:lasso_region}, we first show that the boundary of $\Dlasso$ can be achieved by a sequence of priors and noises. Then, we extend the achievability result from the boundary to the entire interior of the region by a homotopy argument. 


\subsection{Proof of Part (a) of Theorem \ref{thm:lasso_region}}\label{sec:3.1}

We first present the following lemma to derive uniform limits of TPP and FDP over $\lambda$'s.
We denote $\Pi^\star$ to be the conditional distribution of the prior given it is not zero, that is, the distribution $\Pi|\Pi\ne 0$. 

\begin{lemma}\label{lem:fdp_tpp_fix_lambda}
	(Lemma A.1 and A.2 in \cite{su2017false}, see also Theorem 1 in \cite{supp} and Theorem 1.5 in \cite{BM12}). Fix any $\delta,\epsilon$, $\Pi^\star$, $\sigma$, and $0 < \lambda_{\min} < \lambda_{\max}$. When $n/p \to \delta$ and $k/p \to \epsilon$, we have the following uniform convergence, 
	\begin{equation}\label{eq:fdp}
	\sup_{\lambda_{\min} <\lambda < \lambda_{\max}} \left| \FDP(\lambda;\Pi) - \fdp(\lambda;\delta,\epsilon,\Pi^\star,\sigma) \right|  \stackrel{\PP}{\longrightarrow} 0,
	\end{equation}
	\begin{equation}\label{eq:tpp}
	\sup_{\lambda_{\min} <\lambda < \lambda_{\max}} \left| \TPP(\lambda;\Pi)- \tpp(\lambda;\delta,\epsilon,\Pi^\star,\sigma) \right|  \stackrel{\PP}{\longrightarrow} 0,
	\end{equation}
	where the two deterministic functions are
	\begin{align}
	\fdp(\lambda;\delta,\epsilon,\Pi^\star,\sigma)~=&~ \frac{ 2(1-\epsilon) \Phi(-\alpha)}{ 2(1-\epsilon) \Phi(-\alpha)+ \epsilon \PP(|\Pi^\star+\tau W| > \alpha\tau)},\label{eq:fdp_infty} \\
	\tpp(\lambda;\delta,\epsilon,\Pi^\star,\sigma)~=&~  \PP(|\Pi^\star+\tau W| > \alpha\tau),\label{eq:tpp_infty}
	\end{align}
	and where $W\sim\mathcal{N}(0,1)$ independent of $\Pi$. In addition, $\tau >0$, $\alpha > 0$ is the unique solution\footnote{We note the first equation is known as the state evolution equation, and the second is the calibration equation. The notation $\eta_\alpha(\cdot)$ is the soft-thresholding operator defined as $\eta_\alpha(x) = \operatorname{sign}(x) (|x|-t)_+$. We note that the solution $\alpha$ also satisfies $\alpha>\alpha_0$, where $\alpha_0$ is the unique root to $t$ in $(1+t^2)\Phi(-t) - t\phi(t) = \frac{\delta}{2}.$} to
	\begin{equation}\label{basic}
	\begin{aligned}\
	\tau^2&=\sigma^2+\frac{1}{\delta}\E(\eta_{\alpha\tau}(\Pi+\tau W)-\Pi)^2~,\quad
	\lambda&=\left(1-\frac{1}{\delta}\P(|\Pi+\tau W| > \alpha\tau)\right)\alpha\tau.
	\end{aligned}
	\end{equation}
	
\end{lemma}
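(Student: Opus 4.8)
The two limiting functions together with the fixed-point system \eqref{basic} are exactly the objects produced by the state-evolution analysis of the Lasso, so the statement (which is quoted as Lemmas A.1--A.2 of \cite{su2017false}; see also \cite{BM12,supp}) should be proved by reduction to Approximate Message Passing (AMP). The plan has two stages: first obtain the convergence \eqref{eq:fdp}--\eqref{eq:tpp} for each \emph{fixed} $\lambda$, and then upgrade to uniformity over the compact window $[\lambda_{\min},\lambda_{\max}]$.

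For a fixed $\lambda$, I would run the AMP recursion calibrated so that its fixed point is $\widehat{\beta}(\lambda)$, and invoke the Bayati--Montanari state-evolution theorem: for every pseudo-Lipschitz $\psi$, $\tfrac{1}{p}\sum_j \psi(\widehat{\beta}_j(\lambda),\beta_j)\stackrel{\PP}{\to}\E\,\psi(\eta_{\alpha\tau}(\Pi+\tau W),\Pi)$, with $(\alpha,\tau)$ the solution of \eqref{basic} (existence, uniqueness, and the bound $\alpha>\alpha_0$ come from the same theory and I would simply cite them). Since the counts entering $\FDP$ and $\TPP$ involve the indicator $\mathbf{1}\{x\ne 0\}$, which is not pseudo-Lipschitz, I would sandwich it between Lipschitz functions and use that $\{\widehat{\beta}_j(\lambda)\ne0\}$ is, coordinate-wise, asymptotically the event $\{|\Pi+\tau W|>\alpha\tau\}$, whose probability is continuous in the threshold because $\Pi+\tau W$ has a density ($\tau>0$). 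This yields $\#\{j:\widehat{\beta}_j\ne0,\beta_j=0\}/p\to2(1-\epsilon)\Phi(-\alpha)$ and $\#\{j:\widehat{\beta}_j\ne0,\beta_j\ne0\}/p\to\epsilon\,\PP(|\Pi^\star+\tau W|>\alpha\tau)$, and taking ratios produces \eqref{eq:fdp_infty}--\eqref{eq:tpp_infty} at that $\lambda$.

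For the uniform upgrade I would combine three facts. (i) $\lambda\mapsto(\alpha(\lambda),\tau(\lambda))$ is Lipschitz on $[\lambda_{\min},\lambda_{\max}]$, by the implicit function theorem applied to \eqref{basic} (nonsingularity of the Jacobian in this range is where $\alpha>\alpha_0$ enters), so $\fdp$ and $\tpp$ are Lipschitz in $\lambda$ there. (ii) The pointwise convergence above holds simultaneously on a finite grid $\lambda_{\min}=\ell_0<\cdots<\ell_m=\lambda_{\max}$. (iii) A finite-$n$ modulus-of-continuity estimate: introducing the effective observation $r(\lambda):=\widehat{\beta}(\lambda)+c(\lambda)\,\X^\top(\y-\X\widehat{\beta}(\lambda))$ with the explicit scalar $c(\lambda)$ that makes $\{\widehat{\beta}_j(\lambda)\ne0\}=\{|r_j(\lambda)|>\alpha(\lambda)\tau(\lambda)\}$ hold (this is just the Lasso KKT condition), and using that $\lambda\mapsto\widehat{\beta}(\lambda)$, hence $\lambda\mapsto r(\lambda)$, is Lipschitz in $\ell_2$ with a constant that stays bounded on $[\lambda_{\min},\lambda_{\max}]$, a Glivenko--Cantelli-type bound shows that the empirical distributions of the coordinates of $r(\lambda)$ (restricted to $\{\beta_j=0\}$ and to $\{\beta_j\ne0\}$ separately) move by $o(1)$ across one grid cell. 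Chaining (i)--(iii) — the finite-$n$ $\FDP(\lambda)$ is $o(1)$-close to its value at the left grid point, which is $o_{\PP}(1)$-close to $\fdp(\ell_i)$, which is $O(\mathrm{mesh})$-close to $\fdp(\lambda)$ — and letting the mesh shrink slowly as $n\to\infty$ delivers \eqref{eq:fdp}, and identically \eqref{eq:tpp}.

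The main obstacle is step (iii). At finite $n$, $\FDP(\lambda)$ and $\TPP(\lambda)$ are \emph{not} monotone in $\lambda$ — variables enter and leave the Lasso path — so there is no direct sandwiching between grid values; one must go through the continuous surrogate $r(\lambda)$ and therefore needs a genuine, dimension-free Lipschitz bound for the path on $[\lambda_{\min},\lambda_{\max}]$. That bound in turn rests on a high-probability lower bound for the smallest singular value of $\X$ restricted to the active set, which for i.i.d.\ Gaussian designs is available precisely because the number of discoveries never exceeds $n$ (constraint (4) of Definition \ref{def:2.1}), keeping the active set strictly sub-$n$ away from the extreme end of the path. The remaining task — coupling the random, mesh-dependent error from (iii) with the probabilistic pointwise error from (ii) by taking $m=m(n)\to\infty$ at a compatible rate — is careful bookkeeping but introduces no idea beyond the above.
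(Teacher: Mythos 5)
The paper itself offers no proof of this lemma: it is stated as an imported result, explicitly attributed to Lemmas A.1 and A.2 of \cite{su2017false}, which in turn rest on Theorem 1 of \cite{supp} and Theorem 1.5 of \cite{BM12}. There is therefore no in-paper argument to compare against, and your job here is really to reconstruct the cited proof. Your sketch does follow the route those references take: fixed-$\lambda$ convergence via the Bayati--Montanari state-evolution theorem, Lipschitz sandwiching of the non-pseudo-Lipschitz indicator using that $\Pi+\tau W$ has a continuous law, and then a compactness/equicontinuity upgrade over $[\lambda_{\min},\lambda_{\max}]$ through the KKT surrogate and the Lipschitz regularity of the Lasso path. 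You also correctly identify the real technical crux — a dimension-free modulus-of-continuity bound for the path on the compact window — which is precisely what the uniformity lemmas in \cite{supp} establish, so the gap you flag is a genuine step rather than an oversight. One small calibration remark: at finite $n$ the KKT condition gives directly $\widehat{\beta}_j(\lambda)\ne 0 \iff |\widehat{\beta}_j(\lambda)+\X_j^\top(\y-\X\widehat{\beta}(\lambda))|>\lambda$, with no extra scalar $c(\lambda)$; the $\alpha(\lambda)\tau(\lambda)$ threshold and the Onsager-corrected residual only appear in the asymptotic AMP description, and it is the calibration equation in \eqref{basic} that ties the two thresholds together — worth keeping the two scales distinct to avoid circularity when you actually write out step (iii).
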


The guarantee of uniform convergence of TPP and FDP along the Lasso path allows us to directly deal with the two deterministic functions $\tpp$ and $\fdp$, instead of considering TPP and FDP as random variables for each finite $n$ and $p$, which depends on the realizations of the signal, the noise, and the design matrix. 
We now focus on the properties of ($\tpp$, $\fdp$). 

To prove the part (a) of Theorem \ref{thm:lasso_region}, we prove any $(\tpp, \fdp)$ pair satisfies all the four constraints of Definition \ref{def:2.1}. The constraint (1) is trivial, since TPP is bounded between $0$ and $1$ by definition, and so is $\tpp$. We now prove the constraints from Definition \ref{def:2.1} in the order (4)(2)(3). 
We start by proving constraint (4), using the simple fact from the KKT condition that ``Lasso never selects more than $n$ variables''.

\begin{lemma}\label{lem:AMP_torch}
	For any $\epsilon, \delta,\Pi$, and $\sigma$, we have $\frac{\epsilon}{\delta}\tpp + \fdp \le 1$.
\end{lemma}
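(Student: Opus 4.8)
The plan is to express both $\tpp$ and $\fdp$ in terms of the AMP quantities from Lemma~\ref{lem:fdp_tpp_fix_lambda} and then show the claimed inequality reduces to a statement about the number of discoveries not exceeding $n$. Write $P := \PP(|\Pi^\star + \tau W| > \alpha\tau)$ for the ``detection probability'' of a nonnull coordinate and $Q := \Phi(-\alpha)$ (up to the factor of $2$) for the false-selection probability of a null coordinate; then by \eqref{eq:fdp_infty} and \eqref{eq:tpp_infty} we have $\tpp = P$ and $\fdp = \frac{2(1-\epsilon)Q}{2(1-\epsilon)Q + \epsilon P}$. A direct computation gives $\frac{\epsilon}{\delta}\tpp + \fdp = \frac{\epsilon P}{\delta} + \frac{2(1-\epsilon)Q}{2(1-\epsilon)Q + \epsilon P}$, so the claim $\frac{\epsilon}{\delta}\tpp + \fdp \le 1$ is equivalent to $\frac{\epsilon P}{\delta} \le \frac{\epsilon P}{2(1-\epsilon)Q + \epsilon P}$, i.e.\ (when $P>0$) to $2(1-\epsilon)\Phi(-\alpha) + \epsilon P \le \delta$.

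The key observation is that the left-hand side of that last inequality is exactly the asymptotic fraction of selected variables: the total number of discoveries is $\#\{i : \widehat\beta_i(\lambda)\ne 0\}$, which divided by $p$ converges to $2(1-\epsilon)\Phi(-\alpha) + \epsilon P$ (the null coordinates contribute $2(1-\epsilon)\Phi(-\alpha)$ and the nonnull ones contribute $\epsilon P$, consistent with the denominator of \eqref{eq:fdp_infty}). So I would first argue that the support size of the Lasso solution is at most $n$: since $\widehat\beta(\lambda)$ minimizes a strictly convex-in-the-fit objective and the KKT/subgradient conditions pin the residual, a standard fact (e.g.\ via the structure of the Lasso as $\bX$ in general position, or the elementary argument that on its support the Lasso solves a least-squares-type system whose design submatrix must have full column rank) forces $\#\{i:\widehat\beta_i(\lambda)\ne 0\}\le \mathrm{rank}(\bX) \le n$ almost surely. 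Dividing by $p$ and passing to the limit $n/p\to\delta$ yields $2(1-\epsilon)\Phi(-\alpha) + \epsilon P \le \delta$, which is the inequality we needed. The edge case $P = 0$ (hence $\tpp = 0$) must be handled separately, but there the inequality $0 + \fdp \le 1$ is immediate from constraint~(1)/(2).

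I would then reverse the algebra: from $2(1-\epsilon)\Phi(-\alpha) + \epsilon P \le \delta$ multiply through by $\frac{\epsilon P}{\delta(2(1-\epsilon)\Phi(-\alpha)+\epsilon P)}$ to get $\frac{\epsilon P}{\delta}\cdot\frac{2(1-\epsilon)\Phi(-\alpha)+\epsilon P}{2(1-\epsilon)\Phi(-\alpha)+\epsilon P} \ge \frac{\epsilon P}{\delta}\cdot\frac{\delta}{2(1-\epsilon)\Phi(-\alpha)+\epsilon P}$ — more cleanly, just note $\frac{\epsilon P}{\delta} + \fdp \le \frac{\epsilon P}{2(1-\epsilon)\Phi(-\alpha)+\epsilon P} + \fdp = \tpp\cdot\fdp/\,\big((1-\epsilon)\cdot\text{\dots}\big)$; in any case the two forms are elementarily equivalent, so $\frac{\epsilon}{\delta}\tpp + \fdp \le 1$. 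The main obstacle, and the only nontrivial input, is justifying rigorously that the Lasso support has size at most $n$ (equivalently $\le \mathrm{rank}(\bX)$) and that this passes cleanly to the $\tpp$/$\fdp$ limit — the rest is bookkeeping with \eqref{eq:fdp_infty}–\eqref{eq:tpp_infty}. One clean way to get the limit statement without re-deriving support-size asymptotics from scratch is to invoke the AMP characterization directly: $2(1-\epsilon)\Phi(-\alpha)+\epsilon\PP(|\Pi+\tau W|>\alpha\tau)$ appears in \eqref{basic} as $\PP(|\Pi+\tau W|>\alpha\tau)$'s role in the calibration equation, and feasibility of that equation (needing the coefficient $1 - \frac1\delta\PP(|\Pi+\tau W|>\alpha\tau)$ to keep $\lambda>0$, together with the known bound $\alpha>\alpha_0$) forces $\PP(|\Pi+\tau W|>\alpha\tau)\le \delta$; I would check which of these two routes the earlier lemmas make available and use that one.
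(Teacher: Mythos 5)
Your proof is correct, and it takes a genuinely different route from the paper's. The paper works entirely at the finite-sample level: writing $V$ for false discoveries, $D$ for total discoveries, and $k$ for sparsity, it observes that $\frac{\epsilon}{\delta}\TPP+\FDP = \frac{D-V}{n}+\frac{V}{D} =: f_{V,n}(D)$, then shows by an elementary calculus check that this function of $D$ on $[V,n]$ attains its maximum value $1$ at the endpoints, so $f_{V,n}(D)\le 1$ always holds (because $V\le D\le n$, the support-size bound); it then passes to the limit via the uniform convergence lemma. You instead move directly to the AMP limit quantities and reduce the claim, by straightforward algebra with \eqref{eq:fdp_infty}--\eqref{eq:tpp_infty}, to the single inequality $2(1-\epsilon)\Phi(-\alpha)+\epsilon \PP(|\Pi^\star+\tau W|>\alpha\tau)\le\delta$, i.e.\ $\PP(|\Pi+\tau W|>\alpha\tau)\le\delta$. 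Your second proposed route for this inequality --- reading it off the calibration equation in \eqref{basic}, since $\lambda>0$ and $\alpha\tau>0$ force $1-\frac{1}{\delta}\PP(|\Pi+\tau W|>\alpha\tau)>0$ --- is clean, self-contained within the AMP characterization, and in fact gives the strict inequality; it is the tidier of the two and I would lead with it rather than with the support-size argument, since the latter requires the extra step (which you state but do not establish) that $\#\{i:\widehat\beta_i(\lambda)\ne 0\}/p\to 2(1-\epsilon)\Phi(-\alpha)+\epsilon\PP(|\Pi^\star+\tau W|>\alpha\tau)$. Both your proof and the paper's ultimately rest on the same fact --- the Lasso never selects more than $n$ variables --- but the paper encodes it via a pointwise finite-sample bound, while your preferred route encodes it via the sign constraint in the calibration equation; each is a reasonable expository choice.
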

\textit{Proof of Lemma \ref{lem:AMP_torch}.}~
	Let $V = \#\{i:\widehat{\beta}_i(\lambda)\ne 0, \beta_i = 0\}$ be the number of false discoveries,  and $D = \#\{i:\widehat{\beta}_i(\lambda)\ne 0\}$ be the number of total discoveries. Recall the sparsity is $k=\#\{i:\beta_i\ne 0\}$. By definition, we have $\FDP = \frac{V}{D}$ and $\TPP = \frac{D-V}{k}$. So, $\frac{\epsilon}{\delta} \TPP + \FDP = \frac{\epsilon}{\delta} \frac{D-V}{\epsilon p} + \frac{V}{D}
	= \frac{D-V}{n} + \frac{V}{D}:=f_{V,n}(D)$. We view $f_{V,n}(D)$ as a function of $D$, and treat $V,n$ as fixed. Note that by definition, $V\le D$, and the number of discoveries made by the Lasso is less than or equal to $n$, so $V\le D\le n$ always holds. It is not hard to see that, at the two endpoints $D= n$ and $D = V$, $f_{V,n}(D)$ attains its maximum $f_{V,n}(n) = 1=f_{V,n}(V)$. Hence, for any $V, n$, and $D$, $f_{V,n}(D) \le 1$.
	By the uniform convergence of TPP and FDP in \eqref{eq:fdp}\eqref{eq:tpp}, we finally have $\frac{\epsilon}{\delta}\tpp + \fdp \le 1$.\qed
	

The next lemma proves the constraint (2) in Definition \ref{def:2.1} of $\Dlasso$.
\begin{lemma}\label{lem:lasso_not_stupid}
	For any $\epsilon, \delta,\Pi$, and $\sigma$, we have $\fdp\le 1 - \epsilon$.
\end{lemma}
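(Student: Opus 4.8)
The plan is to bound the deterministic limit $\fdp$ given in closed form by \eqref{eq:fdp_infty}, which together with the uniform convergence \eqref{eq:fdp} is all that is needed (and in fact the bound obtained holds for $\fdp(\lambda;\cdot)$ at every $\lambda\in(0,\infty)$, not just in the limit). Abbreviate $a:=2(1-\epsilon)\Phi(-\alpha)$ and $b:=\epsilon\,\PP(|\Pi^\star+\tau W|>\alpha\tau)$, where $(\alpha,\tau)$ are the positive numbers attached to $\lambda$ in Lemma \ref{lem:fdp_tpp_fix_lambda}. Since $\alpha>0$ and $W$ is standard normal (so $\Pi^\star+\tau W$ has full support), both $a$ and $b$ are strictly positive, hence $\fdp=a/(a+b)$ is well defined. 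Clearing denominators and cancelling the common factor $\epsilon(1-\epsilon)>0$, the claim $\fdp\le 1-\epsilon$ reduces to the single inequality
\begin{equation*}
\PP\big(|\Pi^\star+\tau W|>\alpha\tau\big)\ \ge\ 2\Phi(-\alpha).
\end{equation*}

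To establish this, I would first note $2\Phi(-\alpha)=\PP(|W|>\alpha)=\PP(|\tau W|>\alpha\tau)$, so the statement is exactly that replacing the centered Gaussian $\tau W$ by $\tau W+\Pi^\star$, with $\Pi^\star$ independent of $W$, cannot decrease the probability of landing outside the symmetric interval $[-\alpha\tau,\alpha\tau]$. Conditioning on $\Pi^\star$, it suffices to show $g(x):=\PP(|\tau W+x|>\alpha\tau)$ satisfies $g(x)\ge g(0)$ for every real $x$, and then to take the expectation over $x=\Pi^\star$. The function $g$ is even by the symmetry of $\tau W$, and writing $g(x)=\Phi\big(-(\alpha\tau-x)/\tau\big)+\Phi\big(-(\alpha\tau+x)/\tau\big)$ one computes $g'(x)=\frac{1}{\tau}\big[\phi\big((\alpha\tau-x)/\tau\big)-\phi\big((\alpha\tau+x)/\tau\big)\big]\ge 0$ for $x\ge 0$, because $|\alpha\tau-x|\le \alpha\tau+x$ when $x\ge 0$ and $\phi$ decreases in the absolute value of its argument. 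Thus $g$ is minimized at $x=0$ with $g(0)=2\Phi(-\alpha)$, which is the desired bound. (Equivalently, this is the one-dimensional instance of Anderson's inequality: a centered Gaussian is a symmetric unimodal law, so an independent shift only pushes mass out of any interval centered at the origin.)

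I do not anticipate a genuine obstacle here: the entire content is the elementary anti-concentration fact just described, which is the analytic incarnation of the statement that the Lasso does no worse than random guessing — a random size-$D$ selection would incur false discovery proportion roughly $1-\epsilon$. The only points needing a line of care are verifying that the denominator of \eqref{eq:fdp_infty} is strictly positive, so that the algebraic reduction is legitimate, and observing that the resulting bound is uniform over $\lambda\in(0,\infty)$, so that \eqref{eq:fdp} upgrades it to the asymptotic statement about the actual Lasso path.
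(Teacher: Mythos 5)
Your proof is correct and follows essentially the same route as the paper's: both reduce the bound on $\fdp$ (via the closed form \eqref{eq:fdp_infty} and the uniform convergence of Lemma \ref{lem:fdp_tpp_fix_lambda}) to the single inequality $\PP(|\Pi^\star+\tau W|>\alpha\tau)\ge 2\Phi(-\alpha)$, justified by the unimodality/symmetry of the Gaussian. Your version merely spells out the conditioning on $\Pi^\star$ and the derivative computation for $g$ that the paper compresses into the phrase ``the standard normal distribution is uni-modal at the origin.''
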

To see why this lemma should be intuitively true, recall that $\epsilon = k/p$, and if we select signals uniformly at random, each variable that we select is false with probability $\frac{p-k}{p} = 1-\epsilon$. As a result of randomly selecting variables, we end up with FDP$= 1-\epsilon$ on average. 
It is natural for the Lasso to produce a better result. Therefore, this lemma is a sanity check on the Lasso, claiming that the Lasso performs no worse than a random guess. Consequently, $1-\epsilon$ serves as a simple upper bound for FDP.

\textit{Proof on Lemma \ref{lem:lasso_not_stupid}.}~
	Observe the probability $\PP(|\Pi^\star+\tau W|>\alpha\tau) = \PP(|\frac{\Pi^\star}{\tau}+ W|>\alpha)\ge \PP(| W|>\alpha) = 2\Phi(-\alpha)$, where the inequality holds as the standard normal distribution is uni-modal at the origin and $\alpha>0$. Therefore, by \eqref{eq:fdp_infty} we have 
	$$\fdp = \frac{ 2(1-\epsilon) \Phi(-\alpha)}{ 2(1-\epsilon) \Phi(-\alpha)+ \epsilon \PP(|\Pi^\star+\tau W| > \alpha\tau)}
	\le \frac{ 2(1-\epsilon) \Phi(-\alpha)}{ 2(1-\epsilon) \Phi(-\alpha)+ \epsilon 2\Phi(-\alpha)}
	\le 1-\epsilon.\qed$$

The proof of constraint (3) is involved, the goal is to show there exists some $q^*$ such that $\fdp\geq q^\star(\tpp)$ holds. We defer the explicit expression of $q^\star$ to \eqref{eq:q_lower}, and the formal proof of this inequality to Lemma \ref{lem:q_lower} in Appendix \ref{sec:app_1}. Take it as given for a while, and we can prove the following:
\begin{proof}[Proof of Theorem \ref{thm:lasso_region}(a)]
	By the uniform convergence asserted in Lemma \ref{lem:fdp_tpp_fix_lambda}, it suffices to show the limits $(\tpp, \fdp)$ are in $\Dlasso$. Given the lemmas above, it is immediate to verify $(\tpp, \fdp)$ satisfies all the constraints:
	(1) $0\le\tpp\le1$, by its definition;
	(2) $0\le\fdp\le 1-\epsilon$, by Lemma \ref{lem:lasso_not_stupid};
	(3) $\fdp\ge q^\star(\tpp)$, by Lemma \ref{lem:q_lower};
	(4) $\fdp+\frac{\epsilon}{\delta}\tpp \le 1$, by Lemma \ref{lem:AMP_torch}.
\end{proof}

\subsection{Proof of Part (b) of Theorem \ref{thm:lasso_region}}
In the previous subsection, we have shown that, asymptotically, all (TPP, FDP) pairs along the Lasso path locate inside the region $\Dlasso$. We now proceed to prove that \textit{every} point in $\Dlasso$ is indeed asymptotically achievable by some Lasso solution.

We note that for each prior $\Pi$, each specific noise level $\sigma$, and each fixed $\lambda$, the pair (TPP, FDP) is asymptotically a fixed point specified as the $(\tpp, \fdp)$ in Lemma \ref{lem:fdp_tpp_fix_lambda}. Therefore, given $\epsilon$ and $\delta$, the entire achievable region is composed by the trajectory of $(\tpp, \fdp)$ resulting from the variation of the three parameters: $\Pi, \sigma$ and $\lambda$. To simplify the analysis of the trajectory of $(\tpp, \fdp)$, we will always fix some of these parameters.
We first ``fix'' the penalty parameter $\lambda$ and consider the variation of the noise and the prior. Two extreme scenarios are easy to analyze: when $\lambda$ is large enough and when $\lambda\to 0$. When $\lambda$ is large enough, the Lasso simply makes no discovery\footnote{We note that, this is the limiting regime when $\tpp\to0^+$, i.e. the moment when we are about to have an infinitesimal positive power. At this moment, the FDP can be non-zero, depending on the possibility of the first variable being a false variable. For notational convenience, we abuse the notation a little bit, and use $\lambda\to\infty$ to denote this limiting regime in the following.} for any fixed prior and noise, and hence the trajectory of $(\tpp, \fdp)$ is a vertical line at $\tpp=0$. When $\lambda\to0$, there is almost zero shrinkage, so the Lasso tends to make the maximum possible amount of discoveries. It is not hard to show that when $n$ is larger than $p$, $\tpp\to 1$ almost surely, while when $n$ is less than $p$, the Lasso selects at most $n$ variables. Therefore, with different sampling ratio $\delta = n/p$ and sparsity ratio $\epsilon = k/p$, one expects the pairs $(\tpp, \fdp)$ to have different trajectories when we vary the prior and the noise. 
Another useful extreme case corresponds to ``fix'' the prior to be some (sequence of) $\Pi$, such that when we vary $\sigma$ from $0$ to $\infty$, the corresponding Lasso path is close to the lower boundary of $\Dlasso$ (i.e., the curve $q^\star$) and the upper boundary of $\Dlasso$ (i.e., $\fdp = 1-\epsilon$).

We will prove that the trajectories in the above discussion jointly constitute the boundary of $\Dlasso$, whose achievability is guaranteed asymptotically. Therefore, it is just a stone's throw to prove the achievability of the interior of $\Dlasso$ by the homotopy theory.
We start with the following lemma, which analyzes the trajectory of $(\tpp, \fdp)$ when $\lambda\to\infty$ and $\lambda\to0$, separately. Recall that we define $\epsilon^\star$ in equation (\ref{eq:eps_star}).

\begin{lemma}\label{prop:end_points}
	For any prior $\Pi$ and any noise level $\sigma\ge 0$, let $q^\Pi$ be the corresponding TPP--FDP tradeoff curve. The following statements hold: 

	1. As $\lambda\to\infty$, we have $\lim_{\lambda\to\infty}\tpp(\lambda)\to 0$, for any $\delta>0$ and $0<\epsilon\le 1$;
	
	2. When $\delta>1$, as $\lambda\to 0$, we have  $\lim_{\lambda\to 0}(\tpp(\lambda), \fdp(\lambda))$ lying on the vertical line $\tpp = 1$;
	
	3. When $\delta\le 1$ and $\epsilon\ge \epsilon^\star(\delta)$, as $\lambda\to 0$, we have $\lim_{\lambda\to 0}(\tpp(\lambda), \fdp(\lambda))$ lying on the line $\frac{\epsilon}{\delta} \tpp + \fdp = 1$;
	
	4. When $\delta\le 1$ and $\epsilon< \epsilon^\star(\delta)$,  as $\lambda\to 0$, we have  $\lim_{\lambda\to 0}(\tpp(\lambda),\fdp(\lambda))$ lying on the poly-line of $\tpp = 1$ and  $\frac{\epsilon}{\delta} \tpp + \fdp = 1$ .
\end{lemma}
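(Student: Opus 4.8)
\emph{Proof sketch.}
The plan is to reduce all four statements to the behaviour of the pair $(\alpha,\tau)=(\alpha(\lambda),\tau(\lambda))$ solving the state evolution/calibration system \eqref{basic} as $\lambda\to\infty$ and as $\lambda\to0$, and then to read $(\tpp,\fdp)$ off the closed forms \eqref{eq:fdp_infty}--\eqref{eq:tpp_infty}. Two elementary facts are used repeatedly. First, since $\lambda,\alpha,\tau>0$, the calibration equation forces the prefactor $1-\tfrac1\delta\PP(|\Pi+\tau W|>\alpha\tau)$ in \eqref{basic} to lie in $(0,1]$, so $0<\lambda\le\alpha\tau$. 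Second, for $W\sim\Normal(0,1)$ one has $\EE\,\eta_t(W)^2=2[(1+t^2)\Phi(-t)-t\phi(t)]$, a strictly decreasing function of $t\ge0$ that equals $\delta$ exactly at $t=\alpha_0$ (the constant in the footnote of Lemma~\ref{lem:fdp_tpp_fix_lambda}), and the Mills bound $\Phi(-t)<\phi(t)/t$ rearranges to $2\Phi(-\alpha_0)>\EE\,\eta_{\alpha_0}(W)^2=\delta$. I first record that $\tau$ is bounded on the whole path, $M:=\sup_{\lambda>0}\tau(\lambda)<\infty$ (standard; cf.\ \cite{BM12}): if $\tau\to\infty$ along a subsequence, then dividing \eqref{basic} by $\tau^2$ and using $\eta_{c\theta}(cx)=c\,\eta_\theta(x)$, dominated convergence (dominator $2W^2+8\Pi^2$) gives $\tfrac1\delta\EE\,\eta_{\alpha_\infty}(W)^2=1$ for a subsequential limit $\alpha_\infty\in[\alpha_0,\infty]$, which forces $\alpha_\infty=\alpha_0$; but then the prefactor tends to $1-\tfrac1\delta\PP(|W|>\alpha_0)=1-2\Phi(-\alpha_0)/\delta<0$, a contradiction (for $\delta\neq1$; the value $\delta=1$ is recovered by continuity in $\delta$).

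\emph{Part 1.} From $\lambda\le\alpha\tau\le M\alpha$ we get $\alpha\to\infty$ and $\alpha\tau\to\infty$, so $\tpp=\PP(|\Pi^\star+\tau W|>\alpha\tau)\le\PP(|\Pi^\star|>\tfrac12\alpha\tau)+2\Phi(-\tfrac12\alpha)\to0$ by Chebyshev and $\EE\Pi^{\star 2}<\infty$. \emph{Part 2.} For $\delta>1$ the prefactor is $\ge1-\tfrac1\delta>0$, so $\alpha\tau\le\tfrac{\delta}{\delta-1}\lambda\to0$; feeding this into \eqref{basic} (dominated convergence, dominator $(2|\Pi|+M|W|)^2$, $\tau\le M$) gives $\tau\to\tau_\infty$ with $\tau_\infty^2=\delta\sigma^2/(\delta-1)$, and since $\alpha\tau\to0$ while $\Pi^\star+\tau_\infty W\neq0$ a.s.\ (it has a density if $\tau_\infty>0$, and equals $\Pi^\star\neq0$ a.s.\ if $\tau_\infty=0$), \eqref{eq:tpp_infty} gives $\tpp\to1$; the limit is on $\{\tpp=1\}$.

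\emph{Parts 3 and 4.} Fix $\lambda_m\downarrow0$. Using $\tau\le M$ and the prefactor in $(0,1]$, pass to a subsequence with $\tau\to\tau_\infty$ and the prefactor $\to\pi_\infty\in[0,1]$, so $\PP(|\Pi+\tau W|>\alpha\tau)\to\delta(1-\pi_\infty)$. If $\pi_\infty>0$, then $\alpha\tau\to0$ and, exactly as in Part~2, $\tau_\infty^2(1-1/\delta)=\sigma^2$; since $\delta\le1$ makes the left side $\le0$, this forces $\sigma=0$, $\tau_\infty=0$, whence $\tpp\to\PP(|\Pi^\star|>0)=1$ and the limit is on $\{\tpp=1\}$. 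If $\pi_\infty=0$, then $\PP(|\Pi+\tau W|>\alpha\tau)\to\delta$; because the denominator of \eqref{eq:fdp_infty} equals $\PP(|\Pi+\tau W|>\alpha\tau)$, combining \eqref{eq:fdp_infty}--\eqref{eq:tpp_infty} yields $\tfrac\epsilon\delta\tpp+\fdp=1+\epsilon\,\tpp\big(\tfrac1\delta-\tfrac1{\PP(|\Pi+\tau W|>\alpha\tau)}\big)\to1$. Hence every subsequential limit lies on the poly-line $\{\tpp=1\}\cup\{\tfrac\epsilon\delta\tpp+\fdp=1\}$, giving Part~4. For Part~3 (so $\delta<1$, $\epsilon\ge\epsilon^\star(\delta)$): if $\epsilon>\epsilon^\star$ then $u^\star<1$ by \eqref{eq:u_star} and $\tpp\le u^\star$ (the maximum power, \cite[Lemma~C.2]{su2017false}), ruling out $\tpp\to1$, so only the line $\tfrac\epsilon\delta\tpp+\fdp=1$ remains; the borderline $\epsilon=\epsilon^\star$ is covered since there $q^\star$ and $l_2$ meet at $\tpp=1$ (the limiting case of Lemma~\ref{lem:intersection} as $\epsilon\downarrow\epsilon^\star$), so any admissible limit with $\tpp=1$ already sits on $l_2$.

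\emph{Where the difficulty lies.} The only step requiring real care is the boundedness of $\tau$, i.e.\ excluding the branch $\tau\to\infty$, $\alpha\to\alpha_0$; this is exactly where the constraint $\alpha>\alpha_0$ and positivity of the calibration prefactor must be combined, via the Mills bound. The rest is bookkeeping, with one conceptual point: in Parts~3 and 4 the equation $\tau_\infty^2(1-1/\delta)=\sigma^2$ is solvable with $\alpha\tau\to0$ only when $\sigma=0$, which is precisely why the position of $\epsilon$ relative to the noiseless transition point $\epsilon^\star$ decides whether the endpoint can reach $\{\tpp=1\}$ (exact recovery) or is pinned to the saturation line $\tfrac\epsilon\delta\tpp+\fdp=1$.
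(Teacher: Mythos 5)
Your proposal follows the same overall plan as the paper — analyze the state evolution and calibration equations \eqref{basic} as $\lambda\to\infty$ and $\lambda\to 0$, then read off $(\tpp,\fdp)$ from \eqref{eq:fdp_infty}--\eqref{eq:tpp_infty} — and the case split in Parts~3--4 (prefactor $\to 0$ versus $\not\to 0$, equivalently $\alpha\tau\to 0$ versus $\alpha\tau$ bounded away from zero) is in substance the paper's dichotomy (a)/(b). But two ingredients are handled by genuinely different means. For Part~1 you prove $\sup_\lambda\tau<\infty$ by contradiction (via the scaling $\eta_{c\theta}(cx)=c\,\eta_\theta(x)$, dominated convergence, and the Mills ratio forcing the calibration prefactor negative if $\alpha\to\alpha_0$), and deduce $\alpha\to\infty$ from $\lambda\le M\alpha$; the paper instead simply cites Lemma~\ref{lem:alpha_lambda} (Corollary~1.7 in \cite{BM12}) for $\alpha\to\infty$ and Lemma~\ref{lem:tau_lambda} (quasi-convexity of $\tau$) to ensure $\tau$ has a limit in $(0,\infty]$. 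Your route is more self-contained, the paper's is shorter. The more interesting divergence is in Part~3. The paper excludes the branch $\tau\to 0$ by proving, from scratch, Lemma~\ref{lem:tau>0} (via Lemma~\ref{lem:ratio>1} and a direct state-evolution contradiction) whenever $\delta<1$ and $\epsilon>\epsilon^\star$. You instead observe that the branch $\pi_\infty>0$ forces $\sigma=0$, $\tau_\infty=0$, hence $\tpp\to 1$, which is incompatible with the maximum-power bound $\tpp\le u^\star<1$ of \cite[Lemma~C.2]{su2017false}. This sidesteps the paper's Lemmas~\ref{lem:ratio>1}--\ref{lem:tau>0} entirely by importing the DT-transition bound as a black box; the approaches are logically equivalent (the bound $u^\star<1$ and Lemma~\ref{lem:tau>0} are two faces of the same phenomenon) but yours is shorter at the cost of a heavier external citation. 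One remark: your borderline argument at $\epsilon=\epsilon^\star$ is correct but terse — what actually pins the $\tpp=1$ limit onto $l_2$ is that the two-sided constraints $\fdp\ge q^\star(\tpp)$ (Lemma~\ref{lem:q_lower}) and $\tfrac{\epsilon}{\delta}\tpp+\fdp\le 1$ (Lemma~\ref{lem:AMP_torch}) coincide at $\tpp=1$ when $\epsilon=\epsilon^\star$, so the value of $\fdp$ is squeezed to $1-\epsilon/\delta$; spelling that out would close the gap (the paper's own invocation of Lemma~\ref{lem:tau>0} at $\epsilon=\epsilon^\star$ is similarly elliptical, since that lemma is stated only for strict inequality).
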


Next, we need to find some priors to approach the upper and lower boundary when $\sigma$ varies. We first show in the following lemma that there exists such a sequence of priors, so that when $\sigma = 0$, the Lasso tradeoff curves approximate the lower boundary $q^\star$ in \eqref{eq:q_lower}.

\begin{lemma}[Lemma 4.4 in \cite{wang2020price}]
\label{lem:heterogeneous}
    Suppose $\sigma = 0$, then there exists a sequence of priors $\Pi^{(m)}$, such that as $m\to\infty$, $q^{\Pi^{(m)}}$ converges uniformly to $q^\star$.
\end{lemma}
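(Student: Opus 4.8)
\textbf{Proof proposal for Lemma~\ref{lem:heterogeneous}.}
The plan is to exhibit an explicit two-point-scale mixture prior and read off its limiting tradeoff curve from the fixed-point system \eqref{basic}--\eqref{eq:tpp_infty} in the noiseless case $\sigma=0$. Concretely, I would take $\Pi^{(m)}$ to be a mixture that, conditional on being nonzero, places mass on a ``large'' value $M$ and a ``small'' value $t^\star(u)\tau$-scale value, with the mixing weight tuned so that a prescribed fraction $u$ of true signals is detectable while the rest are effectively invisible. As $m\to\infty$ one sends $M\to\infty$ so the large signals are detected with probability tending to $1$ at any fixed threshold, and simultaneously pushes the small component down (or its weight appropriately) so that the detection probability of the small component can be dialed to any target. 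The point of the ``sequence'' rather than a single prior is exactly that a single distribution cannot simultaneously make one group perfectly visible and the other group perfectly invisible; only in the limit do both happen.

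The key steps, in order: (i) Write the state evolution equation \eqref{basic} with $\sigma=0$ for the candidate prior $\Pi^{(m)}$ and show that the solution $(\alpha_m,\tau_m)$ stays in a compact set bounded away from the degenerate boundary (using the footnote fact $\alpha>\alpha_0$), so we can pass to a convergent subsequence with limit $(\alpha_\infty,\tau_\infty)$. (ii) Show that along this limit the contribution of the large component $M\to\infty$ to $\E(\eta_{\alpha\tau}(\Pi+\tau W)-\Pi)^2$ vanishes (a detected strong signal is estimated essentially without bias in the relevant scaling), so the state evolution reduces to an equation only involving the small/invisible component; this is what lets $\tau_\infty$, hence $\alpha_\infty=t^\star(u)$, be pinned down by the same equation \eqref{eq:t_lower} that defines $q^\star$. (iii) Compute $\tpp$ and $\fdp$ from \eqref{eq:tpp_infty}--\eqref{eq:fdp_infty} in the limit: the TPP splits as (weight of large)$\cdot 1$ + (weight of small)$\cdot\PP(|W|>\alpha_\infty)$ plus corrections, which matches the form $\epsilon u$ in the denominator of \eqref{eq:q_lower}, and the FDP numerator $2(1-\epsilon)\Phi(-\alpha_\infty)$ falls out directly; so the limiting point is $(u, q^\star(u))$. (iv) Upgrade pointwise convergence in $u$ (equivalently in $\lambda$) to uniform convergence of the whole curve $q^{\Pi^{(m)}}$ to $q^\star$, using monotonicity of the curves and continuity of $q^\star$ (a Dini-type argument), together with the uniform-over-$\lambda$ control from Lemma~\ref{lem:fdp_tpp_fix_lambda}.

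The main obstacle I expect is step (ii) together with the uniformity in step (iv): one must show the reduction of the state evolution is valid \emph{simultaneously} for all detection levels $u\in(0,1)$ (i.e., all $\lambda$ in a compact range), not just for a fixed $u$, so that the limiting curve is genuinely $q^\star$ on its whole domain rather than matched only at isolated points. This requires careful, uniform estimates on how fast $M=M(m)\to\infty$ must grow relative to how the weights and the threshold $\alpha\tau$ move as $\lambda$ ranges over its interval, and controlling the error terms in the TPP/FDP expansions uniformly. Since this lemma is quoted verbatim as Lemma~4.8 of \cite{wang2020price}, I would structure the argument to mirror that construction and then invoke the stability/uniform-convergence machinery already available from Lemma~\ref{lem:fdp_tpp_fix_lambda} to close the gap between pointwise and uniform convergence.
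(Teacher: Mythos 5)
The paper itself does not prove this lemma---it is cited verbatim from Lemma~4.8 of \cite{wang2020price}---so there is no in-house argument to compare against. Evaluating your sketch on its own terms: the overall architecture (a sequence of two-scale priors, one mass point $M_m\to\infty$, the remaining nonzero mass pushed toward $0$, with mixing weight dialed to hit any target $u$; then read the limiting $(\tpp,\fdp)$ off the state-evolution equations; then upgrade to uniformity by monotonicity and continuity) is the right construction and mirrors what is done in \cite{su2017false,wang2020price}.

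However, your key step~(ii) contains a genuine error that would derail the derivation if carried out literally. You claim that as $M\to\infty$, ``the contribution of the large component to $\E(\eta_{\alpha\tau}(\Pi+\tau W)-\Pi)^2$ vanishes'' because ``a detected strong signal is estimated essentially without bias.'' This is false for soft thresholding: for $M\to\infty$ one has $\eta_{\alpha\tau}(M+\tau W)=M+\tau W-\alpha\tau$ almost surely, so $\E(\eta_{\alpha\tau}(M+\tau W)-M)^2\to\tau^2(1+\alpha^2)$, a strictly positive constant, not zero. Consequently the state evolution does \emph{not} reduce to an equation in the small/invisible component alone. In fact, if the large-signal mass has weight $\epsilon'$ within the nonzero part, the noiseless state evolution limits to
\begin{equation*}
\delta=\epsilon\epsilon'(1+\alpha^2)+(1-\epsilon\epsilon')\,\E\bigl[\eta_\alpha(W)^2\bigr],
\end{equation*}
and it is exactly the balance between the $\epsilon\epsilon'(1+\alpha^2)$ term (which you claim vanishes) and the $\E[\eta_\alpha(W)^2]$ term that, combined with $\tpp\to\epsilon'+(1-\epsilon')2\Phi(-\alpha)$, algebraically rearranges into equation~\eqref{eq:t_lower} and hence pins down $\alpha=t^\star(u)$ and $q^\star(u)$. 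Dropping the large-component contribution would give a different equation and the wrong curve. Your steps~(i), (iii), and~(iv) are fine at the level of a sketch; step~(ii) needs to be replaced by this correct accounting of the large-signal contribution before the argument closes.
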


This lemma finds a sequence of priors that achieves the lower boundary when $\sigma = 0$. We now need to show when $\sigma \to \infty$, the tradeoff curves of the same sequence of priors approach the upper boundary $\fdp = 1-\epsilon$. This is intuitive: in the presence of infinite noises, $\fdp$ should be $1-\epsilon$ for any prior, since the signal to noise ratio is infinitely small, and any discovery of the Lasso is equivalent to the discovery of a random guess. Formally, we have the following lemma.
\begin{lemma}\label{lem:sigma_infty}
    For any prior $\Pi$, when the noise level is $\sigma = \infty$, $\fdp \equiv 1-\epsilon$.
\end{lemma}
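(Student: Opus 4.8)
The plan is to pass to the deterministic limits of Lemma~\ref{lem:fdp_tpp_fix_lambda} and send $\sigma\to\infty$. I would parametrize the Lasso path by the threshold level $\alpha$ rather than by $\lambda$: once $\alpha$ is fixed, the first line of \eqref{basic} becomes a fixed-point equation for $\tau$ alone, and the calibration line then recovers $\lambda$ — this is the standard reparametrization behind Lemma~\ref{lem:fdp_tpp_fix_lambda}. Fix such an $\alpha$ (a point of the tradeoff curve) and let $\sigma\to\infty$. Because the second summand in $\tau^2=\sigma^2+\frac1\delta\,\E\big(\eta_{\alpha\tau}(\Pi+\tau W)-\Pi\big)^2$ is nonnegative, $\tau\ge\sigma$, so $\tau\to\infty$.

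The whole content is then the limit of $\tpp$ itself. By \eqref{eq:tpp_infty},
\[
\tpp=\PP\big(|\Pi^\star+\tau W|>\alpha\tau\big)=\PP\big(|\Pi^\star/\tau+W|>\alpha\big)\;\longrightarrow\;\PP(|W|>\alpha)=2\Phi(-\alpha).
\]
One clean justification is dominated convergence: since $\E\Pi^{\star2}<\infty$, $\Pi^\star$ is finite a.s., hence $\Pi^\star/\tau+W\to W$ a.s., and because $W$ has a density the event $\{|W|=\alpha\}$ is null, so the bounded indicators $\mathbf{1}\{|\Pi^\star/\tau+W|>\alpha\}$ converge a.s.\ to $\mathbf{1}\{|W|>\alpha\}$. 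Equivalently, conditioning on $\Pi^\star$ and using $\|\phi\|_\infty=1/\sqrt{2\pi}$ gives the explicit rate
\[
\big|\,\PP(|\Pi^\star+\tau W|>\alpha\tau)-2\Phi(-\alpha)\,\big|\;\le\;\sqrt{\tfrac2\pi}\,\frac{\E|\Pi^\star|}{\tau},
\]
with $\E|\Pi^\star|\le(\E\Pi^{\star2})^{1/2}<\infty$; this is the only place the moment hypothesis enters. Since $\Phi(-\alpha)$ is a fixed positive constant for our fixed $\alpha$, plugging this into \eqref{eq:fdp_infty} gives
\[
\fdp=\frac{2(1-\epsilon)\Phi(-\alpha)}{2(1-\epsilon)\Phi(-\alpha)+\epsilon\,\PP(|\Pi^\star+\tau W|>\alpha\tau)}\;\longrightarrow\;\frac{2(1-\epsilon)\Phi(-\alpha)}{2(1-\epsilon)\Phi(-\alpha)+2\epsilon\Phi(-\alpha)}=1-\epsilon.
\]
Since $\alpha$ was an arbitrary point of the path, the limiting tradeoff curve is the constant $\fdp\equiv1-\epsilon$, consistent with the always-valid bound $\fdp\le1-\epsilon$ of Lemma~\ref{lem:lasso_not_stupid}.

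This statement is not hard, and the only point that merits attention is the parametrization. If one held $\lambda$ fixed while letting $\sigma\to\infty$, the threshold $\alpha=\alpha(\lambda,\sigma)$ would also move, and one would then have to check separately that $\alpha$ stays bounded; otherwise $\Phi(-\alpha)$ could shrink at a rate comparable to the $O(\E|\Pi^\star|/\tau)$ error above and the cancellation in \eqref{eq:fdp_infty} would be lost. Fixing $\alpha$ sidesteps this; the only item to record is the routine AMP fact that for every $\alpha$ on the path the fixed-point equation for $\tau$ is solvable for all $\sigma\ge0$ and yields a positive $\lambda$ (for $\delta<1$ this forces $2\Phi(-\alpha)<\delta$ in the limit, so the endpoint of the limiting curve lands exactly on the line $l_2$). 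Everything else is elementary.
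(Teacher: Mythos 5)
The paper states Lemma~\ref{lem:sigma_infty} and gives only the informal gloss preceding it (``the signal to noise ratio is infinitely small, and any discovery of the Lasso is equivalent to the discovery of a random guess''); no formal argument appears in the main text or the appendix. Your proof supplies the missing rigor, and it is correct. The key moves — reparametrizing the path by the threshold $\alpha$ so that the state-evolution equation $\tau^2=\sigma^2+\frac1\delta\,\E(\eta_{\alpha\tau}(\Pi+\tau W)-\Pi)^2$ pins down $\tau$ as a function of $(\alpha,\sigma)$, observing $\tau\ge\sigma$ so $\tau\to\infty$, and then passing to the limit $\PP(|\Pi^\star/\tau+W|>\alpha)\to 2\Phi(-\alpha)$ either by dominated convergence or by the explicit Lipschitz bound $\sqrt{2/\pi}\,\E|\Pi^\star|/\tau$ — are all sound, and the Lipschitz-rate computation is correct (two applications of the mean value theorem with $\|\phi\|_\infty=1/\sqrt{2\pi}$). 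Plugging into \eqref{eq:fdp_infty} gives exactly $\fdp\to 1-\epsilon$, uniformly over any compact $\alpha$-range bounded away from the boundary.

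The caveat you raise in the closing paragraph is a genuine subtlety and worth keeping: if one freezes $\lambda$ instead of $\alpha$ while letting $\sigma\to\infty$, the induced $\alpha(\lambda,\sigma)$ need not stay bounded, and the cancellation in \eqref{eq:fdp_infty} could degrade. Your remark that the admissible $\alpha$'s yield a positive $\lambda$ and that (for $\delta<1$) positivity of $\lambda$ forces $2\Phi(-\alpha)<\delta$ in the $\tau\to\infty$ limit is right, and it is exactly what makes the limiting curve $\mathcal{C}_2=\{(x,1-\epsilon):0\le x\le\min\{1,\delta\}\}$ terminate on $l_2$, consistent with the paper's later assertion about $x^\star=\min\{1,\delta\}$. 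One small stylistic note: you write ``for every $\alpha$ on the path the fixed-point equation for $\tau$ is solvable for all $\sigma\ge 0$,'' but the set of $\alpha$'s on the path itself depends on $\sigma$ — what you actually use (and what is true) is that for every $\alpha>\alpha_0$ and every $\sigma$ the fixed-point equation has a solution, and as $\sigma\to\infty$ the admissible interval shrinks to $\alpha\ge\Phi^{-1}(1-\delta/2)$ when $\delta<1$. This does not affect the conclusion $\fdp\equiv 1-\epsilon$, which holds at every admissible $\alpha$, but stating it this way avoids a circularity in the parametrization.
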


Combining Lemma \ref{prop:end_points}, Lemma \ref{lem:heterogeneous}, and Lemma \ref{lem:sigma_infty}, it is easy to check that in those extreme cases, the $(\tpp, \fdp)$ points form the entire boundary of $\Dlasso$.
Now we introduce a homotopy lemma to bridge from the achievability of the boundary of $\Dlasso$ to the achievability of the interior of $\Dlasso$. The idea of homotopy is pretty intuitive: suppose there are two curves, Curve $A$ and Curve $B$, and a continuous transformation $f$ move $A$ to $B$. It is easy to imagine that the trajectories of the two endpoints of Curve $A$ during the transformation $f$ form two other curves, say Curve $C$, and Curve $D$. Then there is a region surrounded by Curve $A, B, C$, and $D$. It is a region defined by Curve $A, B$, and the transformation $f$. The homotopy theory guarantees that every point in this region is passed by the transforming curve during the transformation.
We formalize this idea in the following lemma. Its proof is given in Appendix \ref{sec:app_2}.

\begin{lemma}[Intermediate value theorem on the plane]\label{lem:homotopy}
	If a continuous curve in $\mathbb{R}^2$ is parameterized by $f:[l, r]\times [0, 1]\to \mathbb{R}^2$ and if the four curves:
	$\mathcal{C}_1 = \{f(\lambda, 0):l\le \lambda\le r\}$, $\mathcal{C}_2 = \{f(\lambda, 1):l\le \lambda\le r\}$,
		$\mathcal{C}_3 = \{f(l, t):0\le t\le 1\}$, $\mathcal{C}_4 = \{f(r, t):0\le t\le 1\}$,
	join together as a simple closed curve \footnote{We actually want to specify the orientation of the curve $\mathcal{C}$ to be positively oriented, i.e., counter-clockwise oriented, as the convention for Jordan's curve. So if we assume $\mathcal{C}_1$, $\mathcal{C}_2$, $\mathcal{C}_3$ and $\mathcal{C}_4$ are all positively oriented, then $\mathcal{C}$ should be $\mathcal{C}:= \mathcal{C}_1\cup \mathcal{C}_4\cup \mathcal{C}_2\cup \mathcal{C}_3$. More details can be found in Appendix \ref{sec:app_2}.}, $\mathcal{C}:= \mathcal{C}_1\cup \mathcal{C}_2\cup \mathcal{C}_3\cup \mathcal{C}_4$, then $\mathcal{C}$ encloses an interior area $\mathcal{D}$, and $\forall (x, y)\in \mathcal{D}$, $\exists (\lambda, t) \in \mathcal{I}\times [0, 1]$ such that  $f(\lambda, t) = (x, y)$. In other words, every point inside the boundary curve $C$ is realizable by some $f(\lambda, t)$.
\end{lemma}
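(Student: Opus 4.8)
The plan is to reduce the statement to a standard fact about the degree (or winding number) of continuous maps: a point $(x,y)$ lies in the interior of a Jordan curve $\mathcal{C}$ if and only if the winding number of $\mathcal{C}$ around $(x,y)$ is nonzero, and if $(x,y)$ is \emph{not} in the image of $f$, then $f$ restricted to the boundary of the rectangle $R := [l,r]\times[0,1]$ extends continuously to all of $R$, forcing that winding number to be zero — a contradiction. I would first set up notation: let $R = [l,r]\times[0,1]$ with boundary $\partial R$ traversed counterclockwise, so that $f|_{\partial R}$ parameterizes $\mathcal{C}$ (with the orientation $\mathcal{C}_1\cup\mathcal{C}_4\cup\mathcal{C}_2^{-}\cup\mathcal{C}_3^{-}$ as noted in the footnote; the exact concatenation only affects orientation, not the argument). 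By the Jordan curve theorem, $\mathcal{C}$ separates the plane into a bounded interior region $\mathcal{D}$ and an unbounded exterior, and for any $(x,y)\in\mathcal{D}$ the winding number $\mathrm{wind}(\mathcal{C},(x,y)) = \pm 1 \ne 0$.

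The core step is the following: suppose, for contradiction, that some $(x_0,y_0)\in\mathcal{D}$ is not equal to $f(\lambda,t)$ for any $(\lambda,t)\in R$. Then the map $g:R\to\mathbb{R}^2\setminus\{(x_0,y_0)\}$ given by $g = f$ is well defined and continuous, and further $g/\|g - (x_0,y_0)\|$ maps $R$ continuously into the unit circle $S^1$. Since $R$ is contractible (convex), this map is null-homotopic, and hence its restriction to $\partial R$ is null-homotopic in $S^1$; equivalently, the winding number of $f|_{\partial R}$ around $(x_0,y_0)$ is zero. But $f|_{\partial R}$ traces out $\mathcal{C}$, so $\mathrm{wind}(\mathcal{C},(x_0,y_0)) = 0$, contradicting $(x_0,y_0)\in\mathcal{D}$. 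Therefore every point of $\mathcal{D}$ is in the image of $f$, which is the claim. I would phrase the winding-number computation either via the homotopy-lifting property of the covering $\mathbb{R}\to S^1$, or by citing that the degree of a continuous map on the boundary of a disk is a homotopy invariant that vanishes when the map extends over the disk.

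The main obstacle — more bookkeeping than genuine difficulty — is matching the orientation of the concatenation $\mathcal{C}_1\cup\mathcal{C}_2\cup\mathcal{C}_3\cup\mathcal{C}_4$ as literally written in the statement with the boundary traversal of $R$, since $\mathcal{C}_2$ and $\mathcal{C}_3$ need to be reversed (this is exactly what the footnote addresses). I would handle this by simply fixing the parameterization of $\partial R$ once and observing that reversing a sub-arc changes the sign of the winding number but not whether it is zero, so the degree argument is unaffected. A secondary point to state carefully is the hypothesis that $\mathcal{C}$ is a \emph{simple} closed curve: this is what licenses invoking the Jordan curve theorem to get a well-defined interior $\mathcal{D}$ with nonzero winding number; without simplicity the statement can fail. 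Everything else — continuity of $f$, compactness of $R$, contractibility — is immediate, so the proof is short modulo these topological citations.
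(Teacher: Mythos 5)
Your proof is correct and takes essentially the same approach as the paper: a contradiction argument via homotopy invariance of the winding number, using contractibility of the rectangle to force the degree of $f|_{\partial R}$ around a missed point to be zero, against the Jordan--Schoenflies fact that the winding number of a simple closed curve around an interior point is $\pm 1$. (One small slip: the normalized map should be $\bigl(g-(x_0,y_0)\bigr)/\|g-(x_0,y_0)\|$, not $g/\|g-(x_0,y_0)\|$, but the intent is clear and the argument is unaffected.)
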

\begin{proof}[Proof of Theorem \ref{thm:lasso_region}(b)]
	Fix any $\delta$ and $\epsilon$. By Lemma \ref{lem:fdp_tpp_fix_lambda}, to prove the asymptotic achievability of $\Dlasso$ by $(\TPP, \FDP)$, we only need to prove every point of $\Dlasso$ can be achieved by some $(\tpp, \fdp)$. Note that $(\tpp, \fdp)$ is a function of $\lambda, \sigma$ and $\Pi$. So we can denote $g_1:(\lambda, \sigma, \Pi)\mapsto\tpp$, $g_2:(\lambda, \sigma, \Pi)\mapsto\fdp$, and $g = (g_1, g_2).$
	By Lemma \ref{lem:heterogeneous}, there exists a sequence of priors $\Pi^{(m)}$ such that the lower boundary $q^\star$ is the uniform limit of $q^{\Pi^{(m)}}$ when $\sigma = 0$. Combining this with our definition of $g$, we know that $\lim_{m\to\infty} g(\lambda, 0, \Pi^{(m)})$ is exactly the curve $(u, q^\star(u))$. By Lemma \ref{lem:sigma_infty}, we have $g_2(\lambda, \infty, \Pi^{(m)}) = 1-\epsilon$ for any $m$, and thus $\lim_{m\to\infty} g_2(\lambda, \infty, \Pi^{(m)}) = 1-\epsilon$, which is the upper boundary. We can view the lower boundary as curve $\mathcal{C}_1$ and the upper boundary as curve $\mathcal{C}_2$ in the Lemma \ref{lem:homotopy}. Given this, our goal is to find a transformation $f$ such that $f$ transforms $\mathcal{C}_1$ to $\mathcal{C}_2$ and encloses exactly the region $\Dlasso$.
	We define such a transformation $f$ as $f(\lambda, t) = \lim_{m\to\infty} g(\lambda, \tan(t \times \frac{\pi}{2}), \Pi^{(m)})$. From the uniform convergence, we know $f$ itself is continuous. 
	It is direct to verify that $\mathcal{C}_1 =\{f(\lambda, 0):l\le \lambda\le r\}$ is the lower boundary and $\mathcal{C}_2 = \{f(\lambda, 1):l\le \lambda\le r\}$ is the upper boundary.

	Now, consider the tradeoff curve of $q_t(\lambda) \equiv f(\lambda, t)$ where $\lambda\in [l, r]$. Notice that for each $t$, the curve $q_t$ is continuous and bounded on any $[l, r]$, and from Lemma \ref{prop:end_points} we know $q_t$ has a well-defined limit as $\lambda\to0^+$, and $\lambda\to\infty$. Therefore, we can make a continuous extension of $q_t$ from $\lambda\in [l, r]$ to $\lambda\in [0, \infty]$ using the natural compactification. So, without loss of generality, we can think of $[r, l] = [0, \infty]$ as a compact interval on the extended real line. 
	Let $\mathcal{C}_3 =\{ f(0, t):t\in [0,1]\}$, and $\mathcal{C}_4 = \{f(\infty, t): t\in [0,1]\}$.
	Observe that the curve $\mathcal{C}_2$ corresponds to the case that $\sigma \to \infty$, or effectively $\Pi^\star\to 0$. Therefore by Lemma \ref{lem:sigma_infty}, it is just the line $\{(x, 1-\epsilon):x\in[0, x^\star]\}$, where $x^\star = \min \{1, \delta\}$ is the intersection of $\fdp = 1-\epsilon$ and $\fdp+ \frac{\epsilon}{\delta}\tpp = 1$. 
	By Lemma \ref{lem:heterogeneous}, curve $\mathcal{C}_2$ is just the curve $q^\star$ in the range $[0, u^\star]$.
	By the part (1) of Lemma \ref{prop:end_points}, $f(\infty, t)$ is always on the segment joining ($0,0$) and ($0, 1-\epsilon$), so $\mathcal{C}_3$ is just this segment. 
Similarly, by the part (2-4) of Lemma \ref{prop:end_points}, $f(0
, t)$ is always on the poly-line joining from $(x^\star, 1-\epsilon)$ down to the endpoint of curve $\mathcal{C}_2(u^\star)$. 

We observe that $\mathcal{C} = \mathcal{C}_1\cup \mathcal{C}_2\cup \mathcal{C}_3\cup \mathcal{C}_4$ is the boundary of the region $\Dlasso$. Therefore, every point in $\Dlasso$ is achievable by some $f(\lambda, t)$ by the homotopy Lemma \ref{lem:homotopy}. 
\end{proof}

\section{Simulations}
To better illustrate our theoretical results and understand the proof of Theorem \ref{thm:lasso_region}(b), we present the following simulations where we fix the signals $\bbeta$ and vary the sampling ratio and the magnitude of the noise $\bm z$. By Theorem  \ref{thm:lasso_region}, the Lasso path indeed `swipes' the complete achievable region enclosed by the upper and lower boundaries in \eqref{eq:l}, and \eqref{eq:q_lower}, when $\sigma$ varies from $0$ to $\infty$. In the simulations, we fix $p = 1000, k = 300$, and set $n= 1000, 700,$ and $500$, respectively. Consequently, the sparsity ratio $\epsilon$ is fixed to $0.3$ while the sampling ratio $\delta$ is $1, 0.7,$ and $0.5$. We note that these are the same parameters as we used in Figure \ref{fig:su2017} and Figure \ref{fig:dt}. Across all simulations, we fix $\bm\beta$ to be a $300$-sparse vector with 5 different levels of magnitudes: $\beta_j=0.01, 0.1, 1, 10$ or $100$, and each level contains $k/5 = 60$ variables. When $\sigma=0$, the Lasso path is close to the lower boundary. As $\sigma$ increases, the Lasso path gradually moves upward and becomes worse. When $\sigma$ is sufficiently large, the Lasso tradeoff curve approaches the upper boundary and behaves similarly to the random guess. We plot the Lasso tradeoff curves for $8$ levels of $\sigma$ in Figure \ref{fig:dt-visual}. From the results, it is not hard to see the alignment of our theoretical results and the real simulations: our theoretical achievable TPP--FDP regions $\Dlasso$ indeed enclose all Lasso paths (up to small random errors) and the boundaries are tight. In Appendix \ref{sec:app_3}, we also conduct experiments with non-Gaussian or non-i.i.d. design matrix. Though it is hard to quantify the exact Lasso diagram for general design theoretically, empirically we find our diagram is \textit{still} correct upto small differences. The R codes for the simulations and for plotting Lasso tradeoff Diagrams are available at \url{https://github.com/HuaWang-wharton/CompleteLassoDiagram}.

\begin{figure}[ht]
	\begin{subfigure}{.315\textwidth}
		\centering
		\includegraphics[width=\linewidth]{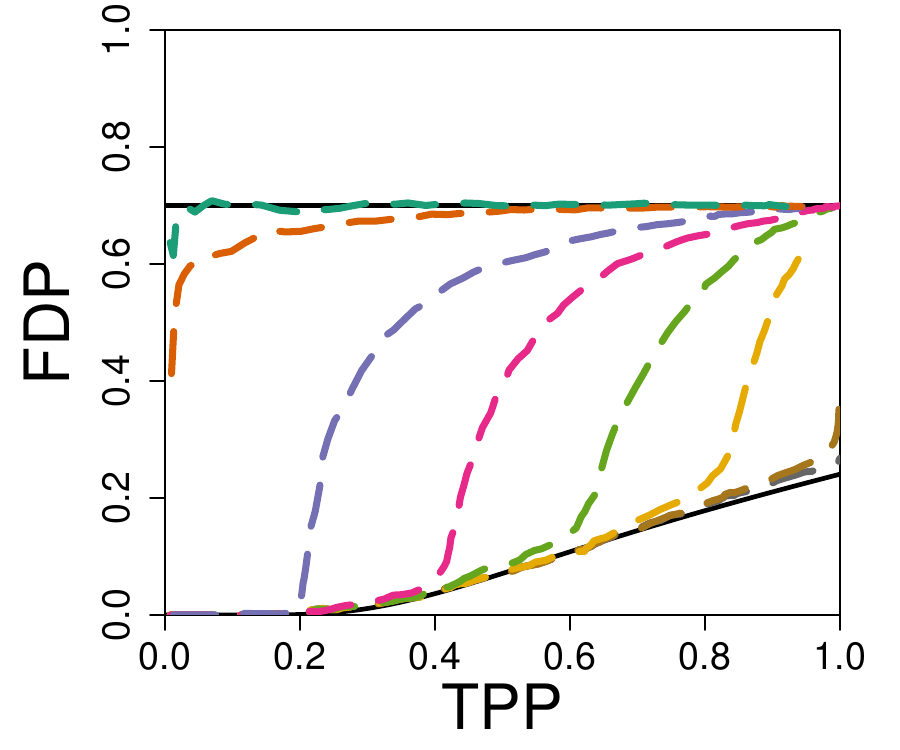}
		\label{fig:dt-1}
	\end{subfigure}
	\begin{subfigure}{.315\textwidth}
		\centering
		\includegraphics[width=\linewidth]{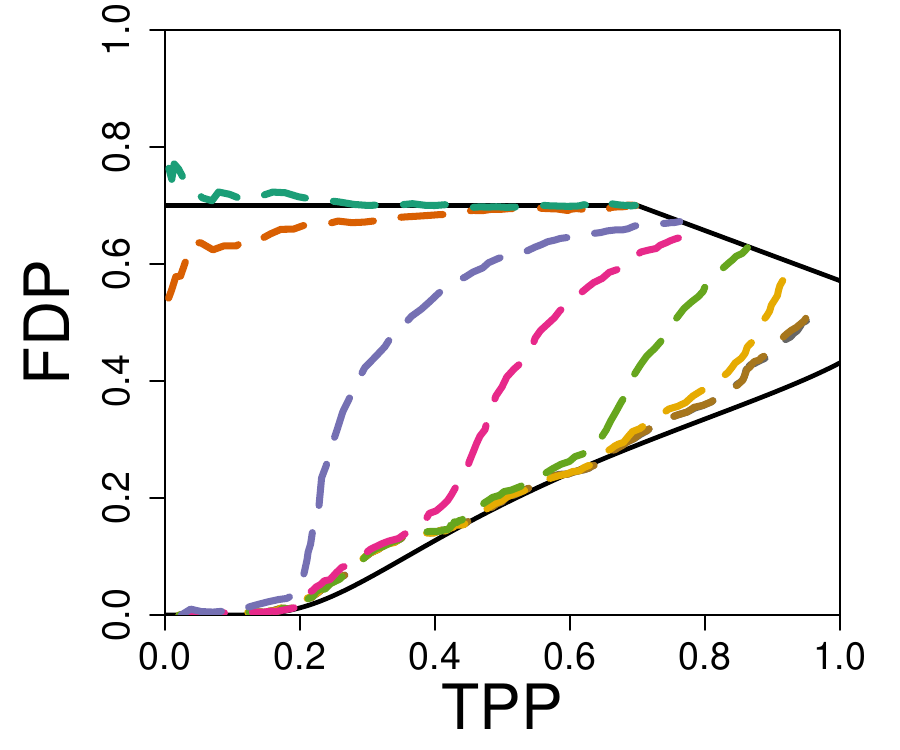}
		\label{fig:dt-2}
	\end{subfigure}
	\begin{subfigure}{.37\textwidth}
		\centering
		\includegraphics[width=\linewidth,height=0.7\linewidth]{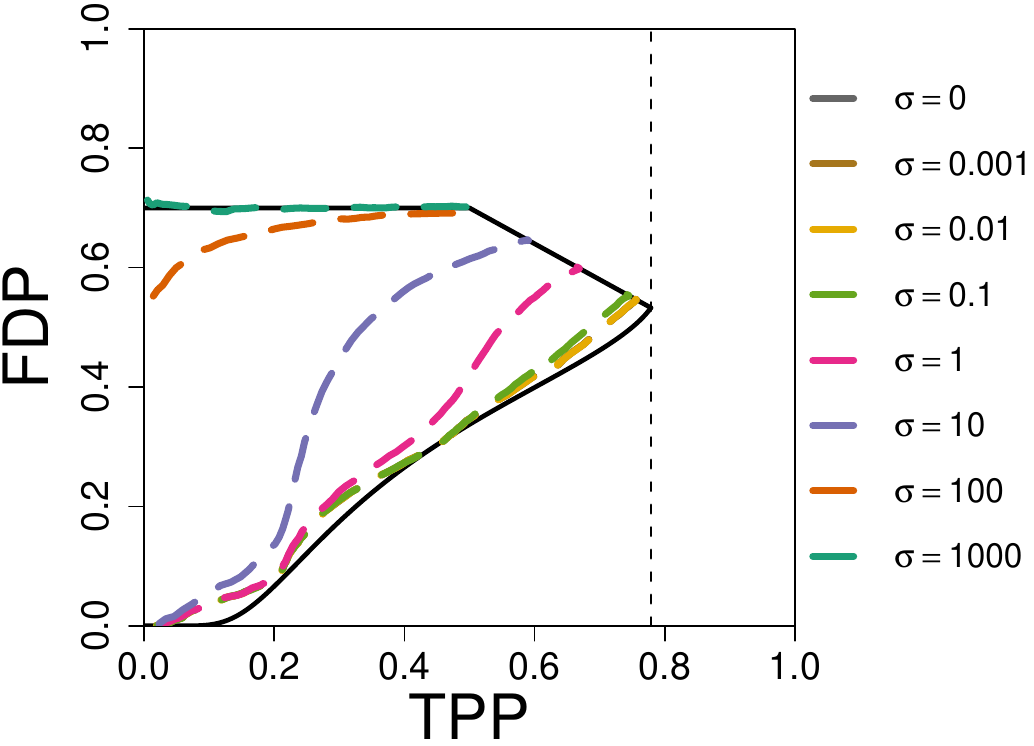} 
		\label{fig:dt-3}
	\end{subfigure}
	\caption{Simulations to show the achievability of the complete Lasso tradeoff diagrams. The lower and upper boundaries are achieved when $\sigma$ varies from $0$ to $1000$ with the heterogeneous prior $\bbeta$. We fix $p = 1000, k = 300$, and set $n = 1000$ (Left), $n = 700$ (Middle), or $n = 500$ (Right). The FDP was obtained by averaging over $100$ independent trials.}
	\label{fig:dt-visual}
\end{figure}

\vspace{-1em}
\section{Conclusions and Future Works}
Our result provides the first \textit{complete} discussion of the tradeoff between the TPP and FDP for the Lasso method under all possible circumstances. In contrast to the previous works, we focus on quantifying the exact achievable region of (TPP, FDP) pairs asymptotically, and therefore resolved the unanswered question of determining the achievability of points in the tradeoff diagram. Notably, we discover that even for the case below the DT phase transition, there is a finer sub-classification of the case $n<p$ and the case $n\ge p$ (compare the Left and Middle panels in Figure \ref{fig:dt} to those in Figure \ref{fig:su2017}). Furthermore, comparing Figure \ref{fig:dt} to Figure \ref{fig:dt-visual}, we confirm that our result is asymptotically exact and aligns with simulations with moderate sample size. 

In closing, we introduce several directions for future research. First, it would be of interest to extend the results to other penalized regression-based models, including but not limited to the SLOPE \cite{slope}, the SCAD \cite{fan2001variable}, the group Lasso \cite{friedman2010note} and the sparse group Lasso \cite{friedman2010note, simon2013sparse}. We believe that the homotopy tool can be applied in the search of the exact tradeoff diagrams of these methods. Such diagrams are of great theoretical interest and would surely enhance our understanding of the advantages and limitations of different methods. Second, it is interesting and important to leverage our understanding of the complete diagram to better analyze practical problems and guide more informed fine-tuning of parameters. As illustrated in Figure \ref{fig:dt_good}, we can have a very narrow estimate of the false discoveries when we know Lasso has large power. A complete tradeoff diagram can be used to theoretically guide our analysis and to explain why certain methods have good empirical performance in certain scenarios. This can be a good scaffold to develop better methods.

{\small
\bibliographystyle{abbrv}
\bibliography{ref}
}
\appendix

\section*{Appendices}
\section{Basic Lemmas}\label{sec:app_1}
In this section, we prove Lemma \ref{lem:intersection} and Lemma \ref{prop:end_points}. They are basic facts about the complete Lasso tradeoff diagram, and their proofs are based on the technical tool of the approximate message passing theory. We devote Appendix \ref{sec:app_2} for a detailed discussion of the homotopy theory.

First of all, we present the definition of the function $q^\star$ in constraint (3) in Definition \ref{def:2.1}. We introduce the lower boundary $q^\star$ of any Lasso tradeoff curve as follows:
let $t^\star(u)$ be the largest solution to
\begin{equation}\label{eq:t_lower}
\frac{2(1 - \epsilon)\left[ (1+t^2)\Phi(-t) - t\phi(t) \right] + \epsilon(1 + t^2) - \delta}{\epsilon\left[ (1+t^2)(1-2\Phi(-t)) + 2t\phi(t) \right]} = \frac{1 - u}{1 - 2\Phi(-t)},
\end{equation}
where $\Phi(\cdot)$ and $\phi(\cdot)$ denote the cumulative density function and probability density function of the standard normal distribution, respectively. Then $q^\star$ is defined as
\begin{equation}\label{eq:q_lower}
q^\star(u) = \frac{2(1-\epsilon)\Phi(-t^\star(u))}{2(1-\epsilon)\Phi(-t^\star(u)) + \epsilon u}.
\end{equation}
With the definition of $q^\star$, we can now state the fundamental tradeoff between $\tpp$ and $\fdp$, which serves as a lower bound for FDP.
\begin{lemma}[Theorem 2.1 in \cite{su2017false}]
	\label{lem:q_lower}
	For any $\epsilon, \delta,\Pi$, and $\sigma$, we have $\fdp\geq q^\star(\tpp)$.
\end{lemma}

Next, we list two known facts for the reader's reference. Their proofs are essentially algebraic calculations from the state-evolution equation (The first equation in (\ref{basic})).
\begin{lemma}\label{lem:alpha_lambda}
	(Corollary 1.7 in \cite{BM12}) Fix $\delta, \epsilon, \Pi$, and $\sigma$. $\alpha$, defined in equation (\ref{basic}), is an increasing function of $\lambda$, and $\lim_{\lambda\to\infty}\alpha(\lambda) = \infty$.
\end{lemma}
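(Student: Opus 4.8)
The plan is to prove Lemma~\ref{lem:alpha_lambda} by reading the two equations in~\eqref{basic} as a composition of maps. For the fixed data $\delta,\epsilon,\Pi,\sigma$, the first (state evolution) equation determines, for each admissible threshold parameter $\alpha$, a noise level $\tau=\tau(\alpha)$, and the second (calibration) equation then outputs $\lambda=\lambda(\alpha):=\alpha\tau(\alpha)\big(1-\frac{1}{\delta}\mathbb{P}(|\Pi+\tau(\alpha)W|>\alpha\tau(\alpha))\big)$. The lemma is equivalent to the claim that $\alpha\mapsto\lambda(\alpha)$ is a continuous, strictly increasing bijection from its domain, an interval $(\underline{\alpha},\infty)$, onto $(0,\infty)$; granting this, $\alpha(\cdot)$ is exactly the inverse map, hence continuous and strictly increasing, and bijectivity forces $\alpha(\lambda)\to\infty$ as $\lambda\to\infty$.

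First I would make $\tau(\alpha)$ rigorous. Fixing $\alpha$ and using the homogeneity $\eta_{c\theta}(cx)=c\,\eta_\theta(x)$ of soft thresholding, one checks that $\tau^2\mapsto\sigma^2+\frac{1}{\delta}\mathbb{E}\big[(\eta_{\alpha\tau}(\Pi+\tau W)-\Pi)^2\big]$ is strictly concave and increasing on $[0,\infty)$, equals $\sigma^2$ at $\tau^2=0$, and has asymptotic slope $\frac{1}{\delta}\mathbb{E}[\eta_\alpha(W)^2]=\frac{2}{\delta}\big[(1+\alpha^2)\Phi(-\alpha)-\alpha\phi(\alpha)\big]$; since $t\mapsto(1+t^2)\Phi(-t)-t\phi(t)$ is strictly decreasing, this slope is $<1$ precisely when $\alpha>\alpha_0$, which is exactly the regime in which the graph eventually drops below the diagonal and so crosses it once. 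Hence there is a unique positive fixed point $\tau(\alpha)$, and since the crossing is transversal the implicit function theorem makes $\alpha\mapsto\tau(\alpha)$ a $C^1$ map; consequently $\lambda(\alpha)$ is $C^1$, and its domain is the sub-interval of $(\alpha_0,\infty)$ on which the bracket $1-\frac{1}{\delta}\mathbb{P}(|\Pi+\tau(\alpha)W|>\alpha\tau(\alpha))$ (equivalently $\lambda(\alpha)$) is strictly positive.

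The heart of the argument is the monotonicity $\lambda'(\alpha)>0$. Here I would differentiate $\lambda(\alpha)$, eliminate $\tau'(\alpha)$ by implicit differentiation of the state evolution equation, and collapse the Gaussian integrals using Stein's identity $\mathbb{E}[W\eta_{\alpha\tau}(\Pi+\tau W)]=\tau\,\mathbb{P}(|\Pi+\tau W|>\alpha\tau)$ together with the companion integration-by-parts formulas for the partial derivatives of $\mathbb{E}[(\eta_{\theta}(\Pi+\tau W)-\Pi)^2]$ in $\theta$ and in $\tau$. On the admissible range one has $\frac{1}{\delta}\mathbb{P}(|\Pi+\tau W|>\alpha\tau)<1$ --- this is literally the condition $\lambda>0$ --- and after simplification $\lambda'(\alpha)$ should reduce to a manifestly positive combination of $\alpha$, $\tau$, this sub-unit probability, and a nonnegative mean-squared-error derivative. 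This derivative computation, the one carried out in \cite{BM12}, is the step I expect to be the main obstacle, since it is the only place one cannot replace the work by a soft monotonicity or compactness argument.

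It remains to pin down the two ends of the $\alpha$-range. The top end, which is all that is strictly needed for the ``$\alpha(\lambda)\to\infty$'' half of the lemma, is clean: as $\alpha\to\infty$ the threshold $\alpha\tau(\alpha)$ blows up while, because $\mathbb{E}\Pi^2<\infty$, dominated convergence gives $\tau(\alpha)^2\to\sigma^2+\frac{1}{\delta}\mathbb{E}\Pi^2$ (bounded) and $\mathbb{P}(|\Pi+\tau(\alpha)W|>\alpha\tau(\alpha))\to0$, so $\lambda(\alpha)=\alpha\tau(\alpha)(1-o(1))\to\infty$. At the bottom end, examining the calibration equation as $\alpha$ decreases to $\underline{\alpha}$ --- where, depending on $\delta$, either the prefactor $\alpha$ or the bracket $1-\frac{1}{\delta}\mathbb{P}(\cdots)$ tends to $0$ --- one checks $\lambda(\alpha)\to0$. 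Continuity together with the intermediate value theorem then gives surjectivity onto $(0,\infty)$, and combined with $\lambda'>0$ this yields the increasing bijection $\alpha:(0,\infty)\to(\underline{\alpha},\infty)$, which proves both assertions of the lemma.
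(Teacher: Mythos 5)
The paper gives no proof of this lemma---it is cited wholesale as Corollary 1.7 of \cite{BM12}---and your sketch is a faithful reconstruction of that reference's approach: view $\lambda$ as a function of $\alpha$ via the state-evolution fixed point $\tau(\alpha)$ and the calibration equation, show $\alpha\mapsto\lambda(\alpha)$ is continuous, strictly increasing, and surjective onto $(0,\infty)$, then invert. You correctly single out the monotonicity $\lambda'(\alpha)>0$ as the one genuinely hard step and defer it to the derivative computation in \cite{BM12}, which is precisely what the paper does by citing.
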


\begin{lemma}\label{lem:tau_lambda}
	(Theorem 3.3 in \cite{mousavi2018consistent}) Fix $\delta, \epsilon, \Pi$, and $\sigma$. The parameter $\tau$, defined in equation (\ref{basic}), is a continuously differentiable (positive) function of $\lambda$, and $\frac{\mathrm{d}\tau^2}{\mathrm{d}\lambda}$ has exactly a one-time sign change from negative to positive, and therefore is quasi-convex.
\end{lemma}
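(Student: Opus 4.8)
\textbf{Proof proposal for Lemma \ref{lem:tau_lambda} (quasi-convexity of $\tau^2$ along the Lasso path).}

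The plan is to work directly with the state-evolution fixed-point characterization in \eqref{basic} and study how $\tau^2$ moves as $\lambda$ increases. Since Lemma \ref{lem:alpha_lambda} already tells us that $\alpha$ is a strictly increasing function of $\lambda$ with $\alpha(\lambda)\to\infty$, and that $\lambda\mapsto\alpha$ is a smooth bijection onto $(\alpha_0,\infty)$, I would first reparametrize everything by $\alpha$ rather than $\lambda$; establishing the sign-change structure of $\mathrm{d}\tau^2/\mathrm{d}\lambda$ is equivalent to establishing it for $\mathrm{d}\tau^2/\mathrm{d}\alpha$. Writing the first equation of \eqref{basic} as $\tau^2=\sigma^2+\tfrac1\delta\,\E\big(\eta_{\alpha\tau}(\Pi+\tau W)-\Pi\big)^2$, I would substitute $\Pi=\tau\pi$ in distribution (i.e.\ work with the rescaled prior) so the right-hand side becomes $\sigma^2+\tfrac{\tau^2}{\delta}\,F(\alpha)$ where $F(\alpha):=\E\big(\eta_{\alpha}(\pi+W)-\pi\big)^2$ depends on $\alpha$ only through the threshold — this is the standard trick that decouples the scale. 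Solving, $\tau^2=\dfrac{\sigma^2}{1-F(\alpha)/\delta}$, which is valid and positive precisely on the range of $\alpha$ where $F(\alpha)<\delta$ (this is exactly the region $\alpha>\alpha_0$ flagged in the footnote after \eqref{basic}, when $\sigma>0$; the $\sigma=0$ case is handled by a separate, simpler argument since then $\tau^2$ is pinned by $F(\alpha)=\delta$ and is monotone).

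Given the closed form $\tau^2=\sigma^2/(1-F(\alpha)/\delta)$, the sign of $\mathrm{d}\tau^2/\mathrm{d}\alpha$ is the sign of $F'(\alpha)$. So the crux reduces to: \emph{$F'(\alpha)$ has exactly one sign change, from negative to positive, as $\alpha$ ranges over the relevant interval.} Here $F(\alpha)=\E\big(\eta_\alpha(\pi+W)-\pi\big)^2$ is the mean-squared soft-thresholding error, a quantity whose behavior in the threshold is classical: as $\alpha\to 0$ it decreases (soft-thresholding with a tiny threshold barely perturbs the estimate, but increasing $\alpha$ from $0$ first reduces variance faster than it adds bias), and as $\alpha\to\infty$ it increases to $\E\pi^2$ (the estimator collapses to zero, so the error is dominated by the signal energy). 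I would prove the single-valley shape of $F$ by differentiating under the expectation: using the identity $\frac{\partial}{\partial\alpha}\E(\eta_\alpha(X)-\mu)^2 = 2\,\E\big[(\eta_\alpha(X)-\mu)\cdot\frac{\partial}{\partial\alpha}\eta_\alpha(X)\big]$ with $\frac{\partial}{\partial\alpha}\eta_\alpha(x)=-\operatorname{sign}(x)\mathbf{1}(|x|>\alpha)$, one gets an explicit formula for $F'(\alpha)$ as an integral against the Gaussian convolution of the prior; showing this integral is negative near $0$, positive near $\infty$, and crosses zero once is a convexity/log-concavity argument on the Gaussian-smoothed density. Alternatively — and this is probably the cleaner route for a short paper — I would simply cite that this is exactly the content of Theorem 3.3 in \cite{mousavi2018consistent}, whose proof already carries out this analysis, and present the reduction above as the conceptual explanation.

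The main obstacle is making the single-sign-change claim for $F'(\alpha)$ fully rigorous for an \emph{arbitrary} prior $\Pi$ with $\E\Pi^2<\infty$: the monotonicity of the MSE-vs-threshold curve is intuitively clear and true, but a clean proof requires controlling the Gaussian-smoothed density $\E\phi(\,\cdot\,-\pi)$ and its derivatives uniformly, and ruling out pathological multi-valley behavior for heavy-ish priors. A secondary subtlety is the boundary of the domain: when $\sigma=0$ the expression $\sigma^2/(1-F(\alpha)/\delta)$ is $0/0$ and the correct statement is that $\alpha$ is pinned to the value with $F(\alpha)=\delta$ as $\lambda$ varies, so $\tau$ is actually monotone (a degenerate case of quasi-convexity); I would dispatch this case first and then assume $\sigma>0$ throughout the main argument. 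Modulo these points — all of which are already resolved in \cite{mousavi2018consistent} — the lemma follows, and for the purposes of this paper it suffices to invoke that reference while recording the reparametrization $\tau^2=\sigma^2/(1-F(\alpha)/\delta)$ that makes the statement transparent.
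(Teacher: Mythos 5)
The paper does not prove Lemma~\ref{lem:tau_lambda} at all: it is quoted verbatim as Theorem~3.3 in \cite{mousavi2018consistent}, and the appendix explicitly defers ("their proofs are essentially algebraic calculations from the state-evolution equation"). So strictly speaking there is no in-paper argument to compare against, and invoking the reference, as you ultimately do, is exactly what the paper does.

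That said, the "conceptual explanation" you offer to accompany the citation contains a genuine error, and it is worth flagging because it would mislead a reader into thinking the lemma is nearly trivial. You claim that after substituting $\Pi = \tau\pi$ in distribution, the state evolution becomes
\begin{equation*}
\tau^2 \;=\; \sigma^2 + \frac{\tau^2}{\delta}\,F(\alpha), \qquad F(\alpha) := \E\bigl(\eta_\alpha(\pi+W)-\pi\bigr)^2,
\end{equation*}
with $F$ depending "on $\alpha$ only through the threshold," and hence $\tau^2 = \sigma^2/(1-F(\alpha)/\delta)$ is an explicit formula. This decoupling is false for a general fixed prior: $\Pi$ does not change as $\lambda$ varies, but $\tau$ does, so the rescaled variable $\pi = \Pi/\tau$ necessarily depends on $\tau$. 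Consequently $F$ is really $F(\alpha,\tau)$, the displayed equation is an \emph{implicit} fixed-point relation rather than a closed form, and the sign of $\mathrm{d}\tau^2/\mathrm{d}\alpha$ is not simply the sign of $\partial_\alpha F$. The only priors for which your decoupling would be legitimate are scale-invariant ones, which are excluded by $\E\Pi^2<\infty$. The quasi-convexity statement is precisely difficult because of this entanglement between the threshold parameter and the effective noise level through the rescaled prior; that is the content of the analysis in \cite{mousavi2018consistent}, which differentiates the fixed-point relation implicitly and controls the resulting terms. So the reduction you propose does not make the result "transparent," and if you want to keep an explanatory paragraph around the citation you would need to retain the $\tau$-dependence in $F$, or else restrict to the noiseless normalization where a different degeneracy (your $0/0$ remark) takes over.
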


The two lemmas above are properties of the parameters in the state evolution equation. We use them together with the following two lemmas to prove Lemma \ref{prop:end_points}.

\begin{lemma}\label{lem:ratio>1}
Suppose $\epsilon>\epsilon^\star(\delta)$, where $\epsilon^\star$ is defined in  (\ref{eq:eps_star}). We have
\begin{align}\label{eq:ratio>1}
\inf_{t>0}\frac{2(1-\epsilon)\left[\left(1+t^{2}\right) \Phi(-t)-t \phi(t)\right]+\epsilon\left(1+t^{2}\right)}{\delta} >1 .
\end{align}
\end{lemma}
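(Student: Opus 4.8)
\textbf{Proof proposal for Lemma \ref{lem:ratio>1}.}
The plan is to show that the infimum over $t>0$ of the function
\[
g(t) := \frac{2(1-\epsilon)\left[\left(1+t^{2}\right) \Phi(-t)-t \phi(t)\right]+\epsilon\left(1+t^{2}\right)}{\delta}
\]
is attained in the limit $t\to 0^+$, and that $g(0^+)>1$ is exactly equivalent to the condition $\epsilon>\epsilon^\star(\delta)$. First I would compute the limiting value: since $\Phi(0)=\tfrac12$ and $\phi(0)=\tfrac1{\sqrt{2\pi}}$, the numerator tends to $2(1-\epsilon)\cdot\tfrac12+\epsilon=(1-\epsilon)+\epsilon=1$ as $t\to0^+$, so $g(0^+)=1/\delta$. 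Hence if $\delta<1$ we immediately get $g(0^+)>1$; and this is consistent with the fact that $\epsilon^\star(\delta)$ only exists for $\delta<1$, so the hypothesis $\epsilon>\epsilon^\star(\delta)$ presupposes $\delta<1$. The real content is therefore not the value at $0$ but showing that $g$ has no interior value that drops to or below $1$ — i.e.\ that $\inf_{t>0}g(t)$ is governed by comparison against the DT threshold.

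The key step is to analyze the numerator $h(t):=2(1-\epsilon)[(1+t^2)\Phi(-t)-t\phi(t)]+\epsilon(1+t^2)$ as a function of $t$. Differentiating, and using the elementary identities $\frac{d}{dt}\Phi(-t)=-\phi(t)$ and $\phi'(t)=-t\phi(t)$, one finds that $h'(t)$ has a single sign change from negative to positive on $(0,\infty)$ (this is the same kind of quasi-convexity computation underlying equation \eqref{eq:eps_star} and the definition of $\alpha_0$ in \cite{su2017false}). Consequently $h$ — and hence $g$ — attains its minimum at a unique interior point $t_\epsilon$ rather than at the endpoints; as $t\to\infty$, $(1+t^2)\Phi(-t)-t\phi(t)\to 0$ while $\epsilon(1+t^2)\to\infty$, so $g(t)\to\infty$. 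I would then observe that the defining equation \eqref{eq:eps_star} for $\epsilon^\star$ says precisely that $h(t)=\delta$ has a (unique, double) root when $\epsilon=\epsilon^\star$, and no root when $\epsilon>\epsilon^\star$; combined with $h(0^+)=1$, $\delta<1$, and the quasi-convexity of $h$, "no root of $h(t)=\delta$" forces $h(t)>\delta$ for all $t>0$, i.e.\ $g(t)>1$ for all $t>0$. Since the minimum is attained at the interior point $t_\epsilon$, we actually get $\inf_{t>0}g(t)=g(t_\epsilon)>1$, which is the strict inequality claimed.

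The main obstacle I anticipate is the monotonicity/quasi-convexity bookkeeping for $h$ — pinning down rigorously that $h'$ changes sign exactly once and in the stated direction, so that "$h(t)=\delta$ has no solution" can be upgraded to "$h(t)>\delta$ everywhere" rather than merely "$h(t)\neq\delta$ everywhere." One clean way around delicate derivative estimates is a continuity/monotonicity-in-$\epsilon$ argument: the left-hand side of \eqref{eq:eps_star} is strictly increasing in $\epsilon$ for each fixed $t$ (the coefficient of $\epsilon$ is $(1+t^2)-2[(1+t^2)\Phi(-t)-t\phi(t)]=(1+t^2)(1-2\Phi(-t))+2t\phi(t)>0$), so $\min_t h(t)$ is strictly increasing in $\epsilon$; at $\epsilon=\epsilon^\star$ this minimum equals $\delta$ by the definition of the DT point, hence for $\epsilon>\epsilon^\star$ it is strictly larger than $\delta$. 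Dividing by $\delta$ gives $\inf_{t>0}g(t)>1$. I would present the proof via this monotonicity-in-$\epsilon$ route, falling back on the explicit sign analysis of $h'$ only to the extent needed to justify that the minimum over $t$ exists and is attained.
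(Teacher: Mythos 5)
Your proposal is correct and follows essentially the same route as the paper: both rest on the observation that the numerator is strictly increasing in $\epsilon$ at each fixed $t>0$ (you verify the coefficient of $\epsilon$ is positive; the paper equivalently shows $2[(1+t^2)\Phi(-t)-t\phi(t)]<1+t^2$ via monotone decrease of that bracket), combined with the boundary values $1$ at $t\to0^+$ and $\infty$ at $t\to\infty$ and the uniqueness of the positive root of \eqref{eq:eps_star} at $\epsilon=\epsilon^\star$. The paper phrases it as a contradiction (an interior value $\le\delta$ for $\epsilon>\epsilon^\star$ would force two roots of the $\epsilon^\star$-equation by the intermediate value theorem), while you phrase it directly ($\min_t$ at $\epsilon^\star$ equals $\delta$ and strictly increases with $\epsilon$); these are contrapositives of one another, so the content is the same, and your small caveat about ensuring the minimum is attained at an interior $t$ is handled automatically by the boundary values exceeding $\delta$.
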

Though it is not hard to prove this fact using a pure calculus tool, we give a simple proof that leverages the definition of $\epsilon^\star$.
\begin{proof}[Proof of Lemma \ref{lem:ratio>1}]
Let $h(t)= 2\left[\left(1+t^{2}\right) \Phi(-t)-t \phi(t)\right]$ and $g(t) = (1-\epsilon^\star)h(t) + \epsilon^\star (1+t^2)$ Notice that for any $t>0$, we have
\begin{align*}
\frac{\mathrm{d}}{\mathrm{d}t}h(t) = 2[t\Phi(-t) - (1+t^2)\phi(t)]\le 2[t\Phi(-t) - (1+t^2)(\frac{1}{t} - \frac{1}{t^3})\Phi(-t)] = -\frac{2}{t^3}\Phi(-t)<0,
\end{align*}
where the first inequality is due to the well-known fact $\phi(t)\ge (\frac{1}{t} - \frac{1}{t^3})\Phi(-t)$ for $t>0$. So $h(t)$ is strictly decreasing when $t>0$, and thus $h(t)< h(0) = 1 < 1 + t^2$ for all $t>0$. 

Now, suppose (\ref{eq:ratio>1}) does not hold, and thus we can find some $t$ such that 
$$
2(1-\epsilon)\left[\left(1+t^{2}\right) \Phi(-t)-t \phi(t)\right]+\epsilon\left(1+t^{2}\right) = \delta,
$$
or equivalently,
$$
(1-\epsilon)h(t) + \epsilon (1+t^2) = \delta.
$$
Since $\epsilon>\epsilon^\star$, we know 
$$
g(t) = (1-\epsilon^\star)h(t) + \epsilon^\star (1 + t^2) < \delta.
$$
But $g(0) = 1 >\delta$ and $\lim_{t\to\infty}g(t) = \infty$. By the continuity of $g(t)$, we know there is a root for $g(t) = \delta$ in $(0, t)$ and $(t, \infty)$, contradicting with the definition of $\epsilon^\star$, or the fact that $g(t) = \delta$ has a unique positive root.
\end{proof}

\begin{lemma}\label{lem:tau>0}
	for $\delta<1$ and $\epsilon>\epsilon^\star(\delta)$, we have $$\lim_{\lambda\to0^+}\tau >0.$$
\end{lemma}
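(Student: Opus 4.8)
The plan is to argue by contradiction, assuming $\lim_{\lambda\to0^+}\tau=0$ and contradicting Lemma~\ref{lem:ratio>1}. First I would dispose of the easy preliminaries. By Lemma~\ref{lem:tau_lambda} the function $\tau^2$ is monotone on a right neighbourhood of $\lambda=0$, so $\tau_0:=\lim_{\lambda\to0^+}\tau$ exists in $[0,\infty]$; and if $\sigma>0$ then the first line of \eqref{basic} already gives $\tau^2\ge\sigma^2>0$, so it suffices to treat $\sigma=0$. Next, by Lemma~\ref{lem:alpha_lambda} $\alpha(\cdot)$ is increasing, and by the footnote to Lemma~\ref{lem:fdp_tpp_fix_lambda} it stays above the positive number $\alpha_0$ solving $(1+t^2)\Phi(-t)-t\phi(t)=\delta/2$ — which is positive exactly because $\delta<1$. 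Hence $\alpha(\lambda)$ decreases, as $\lambda\to0^+$, to a finite limit $\alpha_\star\in[\alpha_0,\infty)$, and it is bounded on every $(0,\lambda_1]$.

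The core of the argument is to let $\lambda\to0^+$ in the state-evolution fixed-point equation $\delta\tau^2=\E\bigl(\eta_{\alpha\tau}(\Pi+\tau W)-\Pi\bigr)^2$ (with $\sigma=0$) after dividing through by $\tau^2$. Using that $\eta_\theta$ is positively homogeneous, the right-hand side becomes
\begin{equation*}
(1-\epsilon)\,\E\,\eta_\alpha(W)^2+\epsilon\,\E\Bigl(\eta_\alpha\bigl(\tfrac{\Pi^\star}{\tau}+W\bigr)-\tfrac{\Pi^\star}{\tau}\Bigr)^2 ,\qquad W\indep\Pi^\star .
\end{equation*}
The first summand is the explicit continuous function $2(1-\epsilon)\bigl[(1+\alpha^2)\Phi(-\alpha)-\alpha\phi(\alpha)\bigr]$ of $\alpha$. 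For the second I would use the deterministic bound $|\eta_\alpha(y)-y|\le\alpha$: this makes the integrand at most $2\alpha^2+2W^2$, dominated by the integrable $2\alpha(\lambda_1)^2+2W^2$ uniformly for $\lambda\in(0,\lambda_1]$; and pointwise, since $\Pi^\star\ne0$ almost surely and $\tau_0=0$, one has $|\Pi^\star/\tau|\to\infty$, whence $\eta_\alpha(\Pi^\star/\tau+W)-\Pi^\star/\tau\to W-\operatorname{sign}(\Pi^\star)\alpha_\star$ a.s. Dominated convergence then sends the second summand to $\E\bigl(W-\operatorname{sign}(\Pi^\star)\alpha_\star\bigr)^2=1+\alpha_\star^2$ (using $\E W=0$ and independence). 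Passing to the limit in the identity, whose left side is the constant $\delta$, would give
\begin{equation*}
\delta=2(1-\epsilon)\bigl[(1+\alpha_\star^2)\Phi(-\alpha_\star)-\alpha_\star\phi(\alpha_\star)\bigr]+\epsilon(1+\alpha_\star^2) ,
\end{equation*}
that is, the function minimized in \eqref{eq:ratio>1} takes the value $1$ at $t=\alpha_\star>0$. Since $\epsilon>\epsilon^\star(\delta)$, Lemma~\ref{lem:ratio>1} says that function is uniformly larger than $1$ on $(0,\infty)$ — the desired contradiction, so $\tau_0>0$.

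The hard part will be the dominated-convergence step for the $\{\Pi\ne0\}$ contribution: the domination has to be uniform in $\lambda$ near $0$, which is exactly why I would lean on the sharp bound $|\eta_\alpha(y)-y|\le\alpha$ together with the boundedness of $\alpha(\lambda)$ near $0$; the almost-sure limit itself is easy, resting only on $\Pi^\star\ne0$ a.s. (part of the mixture assumption) and on $\alpha(\lambda)\to\alpha_\star$. The remaining ingredients — the split over $\{\Pi=0\}$ versus $\{\Pi\ne0\}$ and the Gaussian moment $\E\,\eta_\alpha(W)^2=2[(1+\alpha^2)\Phi(-\alpha)-\alpha\phi(\alpha)]$ — are routine. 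A shorter alternative, avoiding Lemma~\ref{lem:ratio>1}, is to observe that $\tau_0=0$ would force $\tpp(\lambda)=\P(|\Pi^\star/\tau+W|>\alpha)\to1$, which contradicts the uniform bound $\tpp\le u^\star(\delta,\epsilon)<1$ that holds in this regime ($\delta<1$, $\epsilon>\epsilon^\star(\delta)$) by \eqref{eq:u_star} and \cite[Lemma C.2]{su2017false}.
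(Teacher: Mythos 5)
Your main argument is essentially the paper's own: both proceed by contradiction, both divide the state-evolution identity through by $\tau^2$ (or, in the paper's version, keep $\tau^2$ on both sides and derive $\tau^2>\tau^2$), and both land on the contradiction supplied by Lemma~\ref{lem:ratio>1}. The proofs are correct and structurally the same; where you differ is in rigor, and your version is actually cleaner at one delicate point. The paper writes the decisive step as $\inf_{\alpha>0}\liminf_{\tau\to 0}(\cdots)=1+\xi>1$ and then asserts that ``when $\tau$ is small enough'' the quantity exceeds $1+\xi/2$, which silently treats $\alpha$ as a free parameter even though $\alpha=\alpha(\lambda)$ and $\tau=\tau(\lambda)$ vary jointly. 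You instead first pin down the limit $\alpha_\star=\lim_{\lambda\to 0^+}\alpha(\lambda)\in[\alpha_0,\infty)$ using monotonicity and the footnoted lower bound $\alpha>\alpha_0>0$ (which requires $\delta<1$, exactly the hypothesis), and then pass to the limit along the actual trajectory with an explicit dominating function $2\alpha(\lambda_1)^2+2W^2$. This removes the interchange issue and makes the conclusion $\delta=2(1-\epsilon)[(1+\alpha_\star^2)\Phi(-\alpha_\star)-\alpha_\star\phi(\alpha_\star)]+\epsilon(1+\alpha_\star^2)$ an honest limit identity, contradicting Lemma~\ref{lem:ratio>1} directly. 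Your reduction to $\sigma=0$ at the start is also a nice simplification the paper doesn't bother to make explicit.

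Your suggested shortcut is a genuinely different route and it does work: if $\tau\to 0$ with $\alpha$ bounded then $\tpp\to 1$, while \cite[Lemma C.2]{su2017false} caps $\tpp$ at $u^\star(\delta,\epsilon)<1$ in this regime. It avoids Lemma~\ref{lem:ratio>1} entirely but trades that for a heavier external citation; the approach the paper (and your main argument) takes is more self-contained, deriving everything from the state-evolution equation and the elementary calculus fact in Lemma~\ref{lem:ratio>1}, which is what Appendix~A is built to do.
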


\begin{proof}[Proof of Lemma \ref{lem:tau>0}]
	We prove by contradiction and suppose $\lim_{\lambda\to0^+}\tau = 0$. By Lemma \ref{lem:tau_lambda}, we know the only regime for this to hold is when $\tau$ monotonically decreases as $\lambda$ decreases. By easy calculation, we have
	$$\lim_{M\to\infty}\EE[(\eta_\alpha(M+W)-M)^2] = 1+\alpha^2,$$
	and 
	$$
	\EE(\eta_\alpha(W)^2) = 2\left[\left(1+t^{2}\right) \Phi(-t)-t \phi(t)\right].
	$$
	We also note that for any $\Pi^\star\ne 0$,  $\frac{\Pi^\star}{\tau} \to\infty$ as $\tau\to 0$. Therefore, by (\ref{eq:ratio>1}) in Lemma \ref{lem:ratio>1}, there exists some $\eta>0$ such that
	\begin{align*}
	&\inf_{\alpha>0}\liminf_{\tau\to0} \frac{1}{\delta}\left( \epsilon\EE (\eta_\alpha (\frac{\Pi^\star}{\tau}+W)-\frac{\Pi^\star}{\tau}) + (1-\epsilon)\EE(\eta_\alpha(W)^2)
	\right) \\= & \inf_{\alpha>0}\frac{2(1-\epsilon)\left[\left(1+\alpha^{2}\right) \Phi(-\alpha)-\alpha \phi(\alpha)\right]+\epsilon\left(1+\alpha^{2}\right)}{\delta}\\
	= & ~1+\xi >1.
	\end{align*}	
	This inequality implies that when $\tau$ is small enough, we must have
	$$
	\frac{1}{\delta}\left( \epsilon\EE (\eta_\alpha (\frac{\Pi^\star}{\tau}+W)-\frac{\Pi^\star}{\tau}) + (1-\epsilon)\EE(\eta_\alpha(W)^2)
	\right)>1 + \frac{\xi}{2}.
	$$
	For this $\tau$, from the state evolution equation (\ref{basic}), we have
	\begin{align*}
	\tau^2 &= \sigma^2 + \tau^2 \frac{1}{\delta}\left( \epsilon\EE (\eta_\alpha (\frac{\Pi^\star}{\tau}+W)-\frac{\Pi^\star}{\tau}) + (1-\epsilon)\EE(\eta_\alpha(W)^2)
	\right)\\
	&>\sigma^2 + \tau^2 (1+\frac{\xi}{2})\\
	&\ge \tau^2,
	\end{align*}
	which is clearly a contradiction. 
\end{proof}

Given the lemmas above, we can now prove the Lemma \ref{prop:end_points}.
\begin{proof}[Proof of Lemma \ref{prop:end_points}]

To prove (1) in Lemma \ref{prop:end_points}, recall the second equation in (\ref{basic})
$$
\lambda=\left(1-\frac{1}{\delta}\P(|\Pi+\tau W| > \alpha\tau)\right)\alpha\tau.
$$
Note that the quantity $(1-\frac{1}{\delta}\P(|\Pi+\tau W| > \alpha\tau))$ is bounded between $0$ and $1$. So, when $\lambda\to\infty$ on the left-hand side, we must also have $\alpha\tau\to \infty$.
Further, we have that $\alpha\to\infty$ as stated in Lemma \ref{lem:alpha_lambda}. 
By (\ref{eq:tpp_infty}), we have
\begin{align*}
\lim_{\lambda\to\infty}\tpp ~=~ \lim_{\lambda\to\infty} \PP(|\Pi^\star+\tau W| > \alpha\tau)=\lim_{\lambda\to\infty} \PP\left( \biggl|\frac{\Pi^\star}{\tau}+W \biggr| > \alpha\right) =0.
\end{align*}
The last equality is due to Lemma \ref{lem:tau_lambda}, which implies that $\lim_{\lambda\to\infty}\tau(\lambda)$ exist in $\mathbb{R}_{>0}\cup\{\infty\}$.

To prove (2), notice that when $\delta>1$, we have $$(1-\frac{1}{\delta}\P(|\Pi+\tau W| > \alpha\tau))\ge 1-\frac{1}{\delta}>0.$$
By the second equation in (\ref{basic}) again, we know that as $\lambda\to0$ on the left-hand side, we must have
\begin{equation*}
\lim_{\lambda\to0^+}\alpha\tau = 0.
\end{equation*}
Since by definition $\Pi^\star\ne0$, we must have
\begin{equation*}
	\lim_{\lambda\to0^+}\tpp = \lim_{\lambda\to0^+}\PP(|\Pi^\star+\tau W|>\alpha\tau) =1 .
\end{equation*}

We now proceed to prove (3) and (4). Note that when $\delta<1$, $\alpha_0$ is always positive. Recall $\alpha_0$ is the solution\footnote{It is defined in the footnote below Lemma \ref{lem:fdp_tpp_fix_lambda}.} $f(t) = \frac{\delta}{2}$, where $f(t)= (1-t^2)\Phi(-t) - t\phi(t)$. 
Since $\frac{\mathrm{d} f(t)}{\mathrm{d}t} = 2t\Phi(-t)-2\phi(t)<0$ and $f(0) = \frac{1}{2}$, we know that the solution to $f(t) = \frac{\delta}{2}< \frac{1}{2}$ must be positive.

Now, for any $\Pi^\star\neq0$ and $\sigma$, we consider the following two cases:
\begin{enumerate}
	\item[(a)] When $\lim_{\lambda\to0^+}\tau = 0$, we have:
	\begin{equation*}
	\lim_{\lambda\to0^+}\tpp = \lim_{\lambda\to0^+}\PP(|\Pi^\star+\tau W|>\alpha\tau) =1.
	\end{equation*}
	\item[(b)] When $\lim_{\lambda\to0^+}\tau > 0$, we can rearrange the second equation in ($\ref{basic}$) and get
	\begin{align*}
	\frac{\lambda}{\alpha\tau} &= \left(1-\frac{1}{\delta}\P(|\Pi+\tau W| > \alpha\tau)\right) \\
	& = \left(1-\frac{1}{\delta}[\epsilon\P(|\Pi^\star+\tau W| > \alpha\tau) + (1-\epsilon)2\Phi(-\alpha)\right).
	\end{align*}
	Take the limit $\lambda\to0$, we have
	\begin{align}\label{eq:lim_to_0}
	 \lim_{\lambda\to0^+} \{ \epsilon\P(|\Pi^\star+\tau W| > \alpha\tau) + (1-\epsilon)2\Phi(-\alpha) \}  = \lim_{\lambda\to0^+}(1-\frac{\lambda}{\alpha\tau})\delta =\delta.
	\end{align}
	Combine (\ref{eq:tpp_infty}) and (\ref{eq:fdp_infty}), we have
	\begin{align*}
	\fdp=\frac{2(1-\epsilon) \Phi(-\alpha)}{2\left(1-\epsilon\right) \Phi(-\alpha)+\epsilon \tpp},
	\end{align*}
	or equivalently, 
	\begin{align*}
	\fdp + \frac{\epsilon\tpp}{ \bigl(\epsilon\P(|\Pi^\star+\tau W| > \alpha\tau) + (1-\epsilon)2\Phi(-\alpha)\bigr)} = 1.
	\end{align*}
	Let $\lambda\to0$ and by (\ref{eq:lim_to_0}), we finally obtain
	$\lim_{\lambda\to0^+}(\fdp + \frac{\epsilon}{\delta}\tpp )= 1$. 
\end{enumerate}

As we have shown in Lemma \ref{lem:tau>0}, when $\epsilon \ge \epsilon^\star(\delta)$ only case (b) happens, and thus we have proven (3); while when $\epsilon < \epsilon^\star(\delta)$, both cases (a) and (b) are possible, and thus we have proven (4). 
\end{proof}

Next, we prove Lemma \ref{lem:intersection}, which is a new characterization of the DT phase transition. It is useful to observe the following facts about  $t^\star$, the unique positive root of equation ($\ref{eq:eps_star}$) with $\epsilon = \epsilon^\star$.
\begin{lemma}\label{lem:t_star_fact}
    Let $t^\star$ be the unique positive root of equation ($\ref{eq:eps_star}$) with $\epsilon = \epsilon^\star$. We have
\begin{enumerate}[label=(\arabic*),ref=(\arabic*)]
	\item \label{level1}$\frac{\phi(\ts)}{\ts} = \frac{\delta}{2(1-\epsilon^\star)}$;
	\item \label{level2}$\Phi(-\ts) = \frac{\delta - \epsilon^\star}{2(1-\epsilon^\star)}$;
	\item \label{level3}$t^\star = t^\star(u')$. That is, $t^\star$ is also the solution to equation  $(\ref{eq:t_lower})$ with $u = u'$, where $u' = 1 -\frac{(1-\delta)\left(\epsilon-\epsilon^{\star}\right)}{\epsilon\left(1-\epsilon^{\star}\right)}.$
\end{enumerate}
\end{lemma}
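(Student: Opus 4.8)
\emph{Proof proposal.} The plan is to read parts (1) and (2) off the observation that, at $\epsilon=\epsilon^\star$, the number $t^\star$ is not merely a root but a \emph{critical point} (double root) of the equation defining $\epsilon^\star$, and then to obtain part (3) by substituting $t^\star$ into \eqref{eq:t_lower} and simplifying with the two identities just found.

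Write $F(t;\epsilon):=2(1-\epsilon)\big[(1+t^2)\Phi(-t)-t\phi(t)\big]+\epsilon(1+t^2)$, so that \eqref{eq:eps_star} reads $F(t;\epsilon)=\delta$. Using $\phi'(t)=-t\phi(t)$ one computes $\partial_t F(t;\epsilon)=4(1-\epsilon)\big[t\Phi(-t)-\phi(t)\big]+2\epsilon t$ and $\partial_t^2 F(t;\epsilon)=4(1-\epsilon)\Phi(-t)+2\epsilon>0$, so $F(\cdot;\epsilon^\star)$ is strictly convex on $[0,\infty)$. Since $F(0;\epsilon^\star)=1>\delta$ (recall $\delta<1$ in the regime where $\epsilon^\star$ is defined) and, by the definition of $\epsilon^\star$, the equation $F(\cdot;\epsilon^\star)=\delta$ has the \emph{unique} positive solution $t^\star$, strict convexity forces $t^\star$ to be the interior global minimizer of $F(\cdot;\epsilon^\star)$, hence $\partial_t F(t^\star;\epsilon^\star)=0$, i.e.
$$
t^\star\big[2(1-\epsilon^\star)\Phi(-t^\star)+\epsilon^\star\big]=2(1-\epsilon^\star)\phi(t^\star).
$$
Substituting this into $F(t^\star;\epsilon^\star)=\delta$, rewritten as $(1+t^{\star2})\big[2(1-\epsilon^\star)\Phi(-t^\star)+\epsilon^\star\big]-2(1-\epsilon^\star)t^\star\phi(t^\star)=\delta$, replaces the bracket by $2(1-\epsilon^\star)\phi(t^\star)/t^\star$; since $(1+t^{\star2})/t^\star-t^\star=1/t^\star$, everything collapses to $2(1-\epsilon^\star)\phi(t^\star)/t^\star=\delta$, which is part (1). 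Feeding part (1) back into the displayed relation gives $2(1-\epsilon^\star)\Phi(-t^\star)+\epsilon^\star=\delta$, which is part (2).

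For part (3), abbreviate $h(t):=2\big[(1+t^2)\Phi(-t)-t\phi(t)\big]$, so $F(t;\epsilon)=(1-\epsilon)h(t)+\epsilon(1+t^2)$ and the denominator in \eqref{eq:t_lower} equals $\epsilon\big[(1+t^2)-h(t)\big]$ (because $(1+t^2)(1-2\Phi(-t))+2t\phi(t)=(1+t^2)-h(t)$). Parts (1)--(2) give $h(t^\star)=\big[(\delta-\epsilon^\star)-\epsilon^\star t^{\star2}\big]/(1-\epsilon^\star)$, whence $F(t^\star;\epsilon)-\delta=(\epsilon-\epsilon^\star)(1+t^{\star2}-\delta)/(1-\epsilon^\star)$ and $(1+t^{\star2})-h(t^\star)=(1+t^{\star2}-\delta)/(1-\epsilon^\star)$. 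Hence the left side of \eqref{eq:t_lower} at $t=t^\star$ equals $(\epsilon-\epsilon^\star)/\epsilon$, while the right side equals $(1-u)(1-\epsilon^\star)/(1-\delta)$ since part (2) yields $1-2\Phi(-t^\star)=(1-\delta)/(1-\epsilon^\star)$; equating and solving for $u$ gives $u=1-(1-\delta)(\epsilon-\epsilon^\star)/\big(\epsilon(1-\epsilon^\star)\big)=u'$. So $t^\star$ solves \eqref{eq:t_lower} with $u=u'$; that it is the \emph{largest} such solution -- so $t^\star(u')=t^\star$ in the sense of the definition of $t^\star(\cdot)$ preceding \eqref{eq:q_lower} -- follows from the monotonicity structure of \eqref{eq:t_lower} in \cite{su2017false} together with the fact (Lemma \ref{lem:ratio>1}) that $u'=u^\star$ is the maximal achievable power when $\epsilon>\epsilon^\star$, so no admissible root of \eqref{eq:t_lower} can exceed $t^\star(u^\star)$.

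The only genuinely non-routine point is the convexity/tangency step: once one sees that $F(\cdot;\epsilon^\star)$ is strictly convex, the uniqueness built into the definition of $\epsilon^\star$ (together with $F(0;\epsilon^\star)=1>\delta$) immediately forces $t^\star$ to be the interior minimizer, hence $\partial_t F(t^\star;\epsilon^\star)=0$; after that, parts (1)--(3) are bookkeeping with the Gaussian identities $\phi'(t)=-t\phi(t)$ and $\frac{d}{dt}\big[(1+t^2)\Phi(-t)-t\phi(t)\big]=2t\Phi(-t)-2\phi(t)$ and the algebraic simplification $(1+t^2)/t-t=1/t$.
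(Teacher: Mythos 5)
Your proof is correct, and it takes a genuinely different route from the paper. The paper establishes parts (1) and (2) by simply quoting the parametric formulas for $\delta$ and $\epsilon^\star$ in terms of $t^\star$ from Lemma C.2 of \cite{su2017false}, namely
\[
\delta=\frac{2\phi(\ts)}{2\phi(\ts)+\ts(1-2\Phi(-\ts))},\qquad
\epsilon^{\star}=\frac{2\phi(\ts)-2\ts\Phi(-\ts)}{2\phi(\ts)+\ts(1-2\Phi(-\ts))},
\]
and reducing both sides algebraically. You instead derive (1) and (2) from first principles: you observe that $F(\cdot;\epsilon^\star)$ is strictly convex (since $\partial_t^2 F=4(1-\epsilon^\star)\Phi(-t)+2\epsilon^\star>0$), that $F(0;\epsilon^\star)=1>\delta$ and $F(t;\epsilon^\star)\to\infty$, and therefore that the uniqueness built into the definition of $\epsilon^\star$ forces $t^\star$ to be the interior minimizer, i.e.\ $\partial_t F(t^\star;\epsilon^\star)=0$. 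That tangency condition plus $F(t^\star;\epsilon^\star)=\delta$ then yield (1) and (2) by the algebraic collapse $(1+t^2)/t-t=1/t$. This is a self-contained argument that also \emph{explains} why the two identities hold (the DT transition is a double-root/tangency event), rather than relying on a closed-form parametrization imported from another paper; the cost is the small extra analytic work of verifying convexity. For part (3) you and the paper do essentially the same substitution, though you route the computation through the previously derived values of $\phi(\ts)$ and $\Phi(-\ts)$ while the paper plugs $\ts$ directly into \eqref{eq:t_lower} and simplifies. One minor remark: your closing sentence that $t^\star$ is the \emph{largest} root of \eqref{eq:t_lower} at $u=u'$ is asserted rather loosely (an appeal to ``monotonicity structure'' and to Lemma \ref{lem:ratio>1}); the paper, to be fair, also does not verify largeness and only checks that $t^\star$ is a root, so this does not make your argument weaker than the original, but if you want to tighten it you would need the monotonicity facts about the left-hand side of \eqref{eq:t_lower} from \cite{su2017false} rather than Lemma \ref{lem:ratio>1}, which concerns a different quantity.
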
 

\begin{proof}[Proof of Lemma \ref{lem:t_star_fact}]
By Lemma C.2 in \cite{su2017false}, $\delta$ and $\epsilon^\star(\delta)$ satisfies the following function form of $\ts$
\begin{align}\label{eq:esp_star_imp}
\delta &=\frac{2 \phi(\ts)}{2 \phi(\ts)+\ts(1 - 2 \Phi(-\ts))}, \\
\epsilon^{\star} &=\frac{2 \phi(\ts)-2 \ts \Phi(-\ts)}{2 \phi(\ts)+\ts(1 - 2 \Phi(-\ts))}.
\end{align}

To prove \ref{level1}, we observe
\begin{equation*}
	\frac{\delta}{2(1-\epsilon^\star)} = \frac{\frac{2 \phi(\ts)}{2 \phi(\ts)+\ts(1 - 2 \Phi(-\ts))}}{2\left(1-\frac{2 \phi(\ts)-2 \ts \Phi(-\ts)}{2 \phi(\ts)+\ts(1 - 2 \Phi(-\ts))}\right)} =  \frac{\phi(\ts)}{\ts}.
\end{equation*}

Similarly, we can prove \ref{level2} by
$$
\frac{\delta - \epsilon^\star}{2(1-\epsilon^\star)} = \frac{\frac{2 \phi(\ts)}{2 \phi(\ts)+\ts(1 - 2 \Phi(-\ts))} - \frac{2 \phi(\ts)-2 \ts \Phi(-\ts)}{2 \phi(\ts)+\ts(1 - 2 \Phi(-\ts))}}{2\left(1 - \frac{2 \phi(\ts)-2 \ts \Phi(-\ts)}{2 \phi(\ts)+\ts(1 - 2 \Phi(-\ts))}\right)} = \Phi(-t^\star).
$$

To prove \ref{level3}, we plug in $t = t^\star$ into equation (\ref{eq:t_lower}) with $u = u'$. The right-hand side of the equation becomes
$$
\frac{1-u'}{1-2\Phi(-t^\star)} = \frac{\frac{(1-\delta)\left(\epsilon-\epsilon^{\star}\right)}{\epsilon\left(1-\epsilon^{\star}\right)}}{1-\frac{\delta - \epsilon^\star}{2(1-\epsilon^\star)}} = \frac{\epsilon-\epsilon^\star}{\epsilon}.
$$
So to verify $t^\star$ is the solution to the equation, we only need to prove
$$
\frac{2(1 - \epsilon)\left[ (1+ t^{\star2})\Phi(-\ts) - \ts\phi(\ts) \right] + \epsilon(1 + t^{\star2}) - \delta}{\epsilon\left[ (1+t^{\star2})(1-2\Phi(-\ts)) + 2\ts\phi(\ts) \right]} = \frac{1 - u'}{1 - 2\Phi(-\ts)} = \frac{\epsilon-\epsilon^\star}{\epsilon},
$$
which is equivalent to show
$$
2(1 - \epsilon)\left[ (1+ t^{\star2})\Phi(-\ts) - \ts\phi(\ts) \right] + \epsilon(1 +  t^{\star2}) - \delta = (\epsilon-\epsilon^\star)\left[ (1+ t^{\star2})(1-2\Phi(-\ts)) + 2\ts\phi(\ts) \right],
$$
or 
$$
2(1-\epsilon^\star)\left[\left(1+t^{2}\right) \Phi(-t)-t \phi(t)\right]+\epsilon^\star\left(1+t^{2}\right) - \delta =0.
$$
This is true by the definition of $t^\star$, which is the solution to ($\ref{eq:eps_star}$).
\end{proof}
Given this result, we can now prove Lemma \ref{lem:intersection}.
\begin{proof}[Proof of Lemma \ref{lem:intersection}]
We first observe that although $q^\star$ is originally defined on the interval $(0,1)$, it can be extended to a function on $(0, \infty)$ with exactly the same form in (\ref{eq:q_lower}). We consider this extended function and denote it still as $q^\star$ for convenience of notation. Since $q^\star$ is monotone, there is only one intersection point with $l_2$ on the $(0, \infty)$. We will verify the following fact:
\begin{equation}\label{eq:lem_intersection_fact}
    \text{The extended curve $q^\star(u)$ intersects with $l_2$ at }u = u' = 1 -\frac{(1-\delta)\left(\epsilon-\epsilon^{\star}\right)}{\epsilon\left(1-\epsilon^{\star}\right)}.
\end{equation}
  Suppose we are given this fact, then we know  $\epsilon\ge\epsilon^\star$ implies $u'\ge1$, so the original $q^\star$ does not intersect with $l_2$ in $(0, 1)$; and when $\epsilon<\epsilon^\star$, we have $0<u'<1$, so $q^\star$ intersects with $l_2$ in $(0, 1)$.

Now, to verify Fact (\ref{eq:lem_intersection_fact}), we use Lemma \ref{lem:t_star_fact} to calculate the function values of both $q^\star$ and $l_2$ at $u = u'$. The function value of $l_2$ at $\TPP = u'$ is 
$$1 - \frac{\epsilon}{\delta} u' = 1 - \frac{\epsilon^\star + \delta \epsilon - \epsilon\epsilon^\star-\delta\epsilon^\star}{\delta(1-\epsilon^\star)} = \frac{(\delta-\epsilon^\star)(1-\epsilon)}{\delta(1-\epsilon^\star)}. $$
Since $t^\star = t^\star(u')$ by Lemma \ref{lem:t_star_fact}, the value of $q^\star(u')$ is
\begin{align*}
q^\star(u') = \frac{2(1-\epsilon)\Phi(-t^\star(u'))}{2(1-\epsilon)\Phi(-t^\star(u')) + \epsilon u'} = \frac{2(1-\epsilon)\frac{\delta - \epsilon^\star}{2(1-\epsilon^\star)}}{2(1-\epsilon)\frac{\delta - \epsilon^\star}{2(1-\epsilon^\star)} + \epsilon (1 -\frac{(1-\delta)\left(\epsilon-\epsilon^{\star}\right)}{\epsilon\left(1-\epsilon^{\star}\right)})}
= \frac{(\delta-\epsilon^\star)(1-\epsilon)}{\delta(1-\epsilon^\star)}.
\end{align*}
These two quantities are equal, and therefore we know that $q^\star$ and $l_2$ intersect at $u'$.
\end{proof}

\section{End-point Homotopy Lemmas}\label{sec:app_2}
In this section, we will give a brief review of some basic topological concepts and facts that allow us to prove Lemma \ref{lem:homotopy}. 

We first state the definition of the winding number.
\begin{definition}[Winding number]
	Any closed continuous curve on the $x-y$ plane that does not pass through the origin have a continuous polar form parameterization
	$$
	r = r(t), \quad \theta = \theta(t), \quad \text{for }0\le t \le 1. 
	$$
	Since the function $r(t)$ and $\theta(t)$ are continuous and the curve starts and ends at the same point, $\theta(0)$ and $\theta(1)$ must differ by an integer multiple of $2\pi$. We define the winding number of the curve with respect to the origin by:
	\begin{equation}
	\text{ winding number }= \frac{\theta(1) - \theta(0)}{2\pi}. 
	\end{equation}
	By translating the coordinate system, it extends to the definition of winding number around any point $p$.
\end{definition}

As an easy calculus exercise, one can prove that the winding number is well-defined and generic to the curve, meaning that the winding number is the same for any parameterization. In topological terminology, the winding number of the curve with respect to the origin is also known as the degree of continuous mapping from the curve to $S^1$, the unit circle in $\R^2$.

Now we define the homotopic function:
\begin{definition}
	Suppose $X$ and $Y$ are topological spaces, and $f$ and $g$ are two continuous functions from $X$ to $Y$. A homotopy between $f$ and $g$ is a continuous function $H:X\times[0,1]\to Y$, such that $H(x, 0) = f(x)$ and $H(x, 1) = g(x) $ for all $x\in X$. Such $f$ and $g$ are called to be homotopic.
\end{definition}

It is easy to see the homotopic relation between functions is an equivalence relation. This can extend to an equivalence relation between topological spaces.
\begin{definition}
	Two topological spaces $X$ and $Y$ are homotopy equivalent, if there exist continuous maps $f:X\to Y$ and $g:Y\to X$, such that $g\circ f$ is homotopic to the identity map on $Id_X$ and $g\circ f$ homotopic to $Id_Y$.
\end{definition}

A famous result given by \cite{brouwer1911abbildung} is the following lemma relates the homotopy equivalence and the winding number.
\begin{lemma}\label{thm:degree}[Degree is homotopy invariant]
	If $f, g: S^1\to R^2$ are homotopic, then the degree of $f$ and $g$ are the same, in other words, the winding numbers of the curves $f(S^1)$ and $g(S^1)$ are the same.
\end{lemma}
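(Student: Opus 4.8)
The plan is to prove the statement by \emph{lifting} the (normalized) homotopy along the angle covering map $p:\mathbb{R}\to S^1$, $p(\theta)=(\cos\theta,\sin\theta)$, and then observing that the relevant endpoint angle difference is a continuous, integer-valued function of the homotopy parameter, hence constant. Throughout I use the convention implicit in the definition of the winding number: the homotopy $H:S^1\times[0,1]\to\mathbb{R}^2$ joining $f=H(\cdot,0)$ and $g=H(\cdot,1)$ is assumed to take values in $\mathbb{R}^2\setminus\{0\}$. This hypothesis is genuinely needed, since in all of $\mathbb{R}^2$ every loop is null-homotopic while loops around the origin have nonzero winding number.

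First I would reduce to maps into the unit circle. Since $v\mapsto v/|v|$ is a continuous retraction of $\mathbb{R}^2\setminus\{0\}$ onto $S^1$ that does not alter the polar-angle function of any curve, replacing $H$ by $\bar H(s,t):=H(s,t)/|H(s,t)|$ changes neither winding number, and $\bar H:S^1\times[0,1]\to S^1$ is continuous. Fixing a surjective parametrization $\gamma:[0,1]\to S^1$ of the source circle with $\gamma(0)=\gamma(1)$, set $G(s,t):=\bar H(\gamma(s),t)$; then $G:[0,1]^2\to S^1$ is continuous and satisfies $G(0,t)=G(1,t)$ for every $t\in[0,1]$.

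Next I would apply the lifting lemma for the covering $p:\mathbb{R}\to S^1$: because the square $[0,1]^2$ is compact, simply connected and locally path-connected, $G$ admits a continuous lift $\widetilde G:[0,1]^2\to\mathbb{R}$ with $p\circ\widetilde G=G$. For a self-contained argument this is proved by compactness: by the Lebesgue number lemma applied to the open cover of $[0,1]^2$ by the sets $G^{-1}(U)$, with $U$ ranging over open half-circles of $S^1$ (each evenly covered, i.e.\ $p^{-1}(U)$ is a disjoint union of arcs on each of which $p$ is a homeomorphism), there is $N$ so large that $G$ maps every subsquare $Q_{ij}=[i/N,(i+1)/N]\times[j/N,(j+1)/N]$ into some half-circle; one then builds $\widetilde G$ on the $Q_{ij}$ one at a time in lexicographic order, at each step taking the unique continuous branch of $p^{-1}\circ G$ on $Q_{ij}$ matching the value already assigned at the shared corner, and checking agreement on each shared edge since the difference of the two candidate lifts there is a continuous $2\pi\mathbb{Z}$-valued function on a connected set, hence constant and equal to $0$. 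Writing $w(\cdot)$ for the winding number about the origin, the paper's definition then gives $w(f)=\frac{1}{2\pi}\bigl(\widetilde G(1,0)-\widetilde G(0,0)\bigr)$ and $w(g)=\frac{1}{2\pi}\bigl(\widetilde G(1,1)-\widetilde G(0,1)\bigr)$.

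Finally I would close with a connectedness argument. Define $\Delta:[0,1]\to\mathbb{R}$ by $\Delta(t):=\widetilde G(1,t)-\widetilde G(0,t)$, which is continuous. For each fixed $t$, $p(\widetilde G(1,t))=G(1,t)=G(0,t)=p(\widetilde G(0,t))$, so $\Delta(t)\in 2\pi\mathbb{Z}$; a continuous map from the connected interval $[0,1]$ into the discrete set $2\pi\mathbb{Z}$ is constant, hence $\Delta(0)=\Delta(1)$, i.e.\ $w(f)=w(g)$. I expect the only real obstacle to be the careful proof of the lifting lemma; the normalization step, the reduction to the square, and the final constancy step are routine. In the write-up I would either cite the lifting lemma as a standard covering-space fact or include the compactness/subdivision construction above, depending on how self-contained the appendix is meant to be.
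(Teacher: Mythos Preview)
Your proof is correct and is the standard covering-space argument for homotopy invariance of the winding number. You also correctly flag the implicit hypothesis that the homotopy must take values in $\mathbb{R}^2\setminus\{0\}$; without this the statement is false as written, since every loop in $\mathbb{R}^2$ is null-homotopic.

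The comparison with the paper is simple: the paper does not prove this lemma at all. It is stated as a classical result and attributed to Brouwer (1911), then used as a black box in the proof of the homotopy lemma (Lemma~\ref{lem:homotopy}). So your write-up is strictly more than what the paper offers. If you want to match the paper's level, a one-line citation suffices; if you want a self-contained appendix, your lifting argument is the right one. The only place to be careful in the self-contained version is the patching of local lifts on the $Q_{ij}$: you correctly argue agreement on shared edges via the ``continuous $2\pi\mathbb{Z}$-valued on a connected set'' trick, but make sure the order of construction guarantees that at each new square you have already defined the lift on a \emph{connected} portion of its boundary, so that the matching condition is unambiguous.
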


To prove Lemma \ref{lem:homotopy}, we need the following classical result on the topological characterization of a simple closed curve.
\begin{lemma}\label{thm:Jordan}[Jordan-Schoenflies theorem]
	Every simple closed curve on the plane separates the plane into two regions, one (the ``inside'') bounded and the other (the ``outside'') unbounded; further these two regions are homeomorphic (and thus homotopic) to the inside and outside of a standard circle $S^1$ on the plane. Specifically, the ``inside'' region is contractible, i.e., homotopy equivalent to a point.
\end{lemma}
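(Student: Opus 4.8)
The statement is the classical Jordan--Schoenflies theorem, and the plan is to prove it in the two standard stages: the topological \emph{separation} statement (the Jordan curve theorem), and then the stronger \emph{homeomorphism} statement (Schoenflies), from which the contractibility clause is immediate. Fix an injective continuous $\gamma\colon S^1\to\mathbb{R}^2$ with image $\mathcal{C}$; since $S^1$ is compact and $\mathbb{R}^2$ Hausdorff, $\gamma$ is a homeomorphism onto $\mathcal{C}$, so $\mathcal{C}$ is compact and connected, $\mathbb{R}^2\setminus\mathcal{C}$ is open, and (as $\mathcal{C}$ is bounded) there is exactly one unbounded component $U_\infty$. The natural invariant is the winding-number function: for $p\notin\mathcal{C}$ let $w(p)$ be the winding number of $\mathcal{C}$ about $p$ as defined above; a short continuity argument shows $p\mapsto w(p)$ is locally constant, hence constant on each component, and since $\mathcal{C}$ subtends total angle less than $2\pi$ as seen from a far-away point, $w\equiv 0$ on $U_\infty$. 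This $w$ is exactly the invariant through which Lemma~\ref{lem:homotopy} later detects the interior.

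\textbf{Separation.} The substance is that $\mathbb{R}^2\setminus\mathcal{C}$ has \emph{exactly two} components. I would obtain this by the Alexander-duality route. First, by induction on dimension --- the key step being a Mayer--Vietoris argument that forbids a nonzero reduced-homology class from being supported simultaneously on both halves of a bisected cell --- one shows that the complement in $S^2$ of any embedded arc is acyclic. Then, writing $\mathcal{C}=A_1\cup A_2$ as two arcs meeting exactly in two points $\{a,b\}$ and applying Mayer--Vietoris to the cover $\{S^2\setminus A_1,\,S^2\setminus A_2\}$ of $S^2\setminus\{a,b\}$, one gets $\widetilde H_*(S^2\setminus\mathcal{C})\cong\widetilde H_*(S^0)$: so $S^2\setminus\mathcal{C}$ has exactly two components, each with vanishing $H_1$ and hence (being open and planar) simply connected. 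Removing the point at infinity, which lies in the unbounded component, leaves $\mathbb{R}^2\setminus\mathcal{C}=U_\infty\sqcup\Omega$ with $\Omega$ bounded and $\partial\Omega=\partial U_\infty=\mathcal{C}$. (A homology-free alternative: deduce the Brouwer fixed point theorem from Lemma~\ref{thm:degree} --- a retraction $\overline{\mathbb{D}}\to S^1$ would homotope $\mathrm{id}_{S^1}$ to a constant through loops avoiding a point --- and then run Maehara's short Brouwer-based proof of the Jordan curve theorem.) Combined with the previous paragraph, $w\equiv 0$ on $U_\infty$ and, once the homeomorphism below is available, $w$ equals $\pm1$ on $\Omega$.

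\textbf{Homeomorphism and contractibility.} Now $\Omega$ is an open, connected, simply connected planar domain, so by the Riemann mapping theorem there is a biholomorphism $\varphi\colon\mathbb{D}\to\Omega$; in particular $\Omega$ is homeomorphic to an open disk and therefore contractible --- this is precisely the ``inside is homotopy equivalent to a point'' clause. Since $\partial\Omega=\mathcal{C}$ is a Jordan curve, Carath\'eodory's boundary-correspondence theorem extends $\varphi$ to a homeomorphism $\overline{\mathbb{D}}\to\overline{\Omega}$ carrying $S^1$ onto $\mathcal{C}$. Applying an inversion $z\mapsto 1/(z-p_0)$ with $p_0\in\Omega$ turns $U_\infty$ into a bounded simply connected Jordan domain, so the same Riemann-mapping/Carath\'eodory argument yields a homeomorphism of the complementary closed disk onto $\overline{U_\infty}$ taking $S^1$ onto $\mathcal{C}$. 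The two induced parametrizations of $\mathcal{C}$ may differ by a self-homeomorphism $h$ of $S^1$; precomposing the exterior map with a homeomorphism of the closed disk that restricts to $h$ on the boundary (Alexander's coning trick) makes the two agree on $\mathcal{C}$, and pasting them along the closed set $\mathcal{C}$ produces a homeomorphism of $\mathbb{R}^2$ (equivalently of $S^2$) sending $\mathcal{C}$ to the unit circle, $\Omega$ to the open unit disk, and $U_\infty$ to its exterior. This delivers every assertion of the lemma.

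\textbf{Main obstacle.} The genuinely hard content is the separation step: the inductive acyclicity-of-arc-complements argument (with its delicate ``class supported on both halves'' Mayer--Vietoris lemma), or, on the alternative route, the Brouwer fixed point theorem together with Maehara's argument. A close second is Carath\'eodory's theorem on the boundary behaviour of conformal maps, itself a substantial classical input; I would cite both rather than reprove them. (An alternative to the conformal argument in the third paragraph is to approximate $\mathcal{C}$ uniformly by polygonal Jordan curves, prove Schoenflies combinatorially for polygons by triangulation, and pass to the limit --- lighter in its inputs but more delicate in the limiting estimates. It is also worth noting that Lemma~\ref{lem:homotopy} uses only the separation statement and the contractibility of the inside, so the full homeomorphism conclusion, though stated here, slightly exceeds what the paper needs.)
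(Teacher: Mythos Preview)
The paper does not actually prove this lemma: it is stated as the classical Jordan--Schoenflies theorem and invoked without proof, purely as a black box in the subsequent proof of Lemma~\ref{lem:homotopy}. Your proposal, by contrast, supplies a correct and substantive outline of a real proof --- separation via the Alexander-duality/Mayer--Vietoris route (with the Maehara--Brouwer alternative), then the homeomorphism via Riemann mapping together with Carath\'eodory's boundary extension, glued over $\mathcal{C}$ by the Alexander coning trick --- which is one of the standard modern treatments and is mathematically sound as sketched. So you have done considerably more than the paper does here; your closing observation that Lemma~\ref{lem:homotopy} uses only the separation statement and the contractibility of the interior, not the full homeomorphism, is also accurate and matches how the paper deploys the result.
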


\begin{proof}[Proof of Lemma \ref{lem:homotopy}]
   We prove by contradiction and suppose there is some point $p\in \mathcal{D}$ that is not in the image of $f$. Denote the domain as $A = \mathcal{I}\times [0, 1]$. Consider a contraction map of the domain, that is, $g:A\to A$ such that $g(A) = \{q\}$ for some point $q\in A$. By the fact that the simply connected region $A$ is contractible, we know $g$ is homotopic to $Id_{A}$, the identity map on $A$. Thus, by the fact that composition of homotopic functions is still homotopic, we know $f = f \circ Id_A$ is homotopic to $f^\prime = f\circ g$, where $f^\prime$ maps the whole domain to one point $f(q)\in \mathcal{D}$.
	
	Now, we consider the confined maps of $f$ and $f^\prime$ on the boundary of domain $\partial A$. By assumption, $f(\partial A)\in \mathcal{D}$ is just the boundary curve $\mathcal{C}$, while $f\circ g( A)= \{f(q)\}\in \mathcal{D}$ is just a point in $\mathcal{D}$. Since those two continuous maps are homotopic, they have the same degree, i.e., they have an equal winding number with respect to point $p$. Note that the two winding numbers with respect to $p$ must be well-defined, since, by assumption, $p$ is not in the image of $f$, so $p\not\in\mathcal{C}$ and $q\ne p$.
	
	By Jordan-Schoenflies theorem \ref{thm:Jordan}, we know the winding number of $\mathcal{C}$ with respect to $p$ is $1$, since it equals to the winding number of $S^1$ with respect to the origin. On the other hand, the winding number of trivial curve $\{q\}$ with respect to $p$ is $0$, since $p \ne q$. This leads to a contradiction.
\end{proof}

\section{Additional Illustrations}\label{sec:app_3}
In this section, we present additional discussion of our work. 

We start with a more illustrative discussion on the difference between our findings in Theorem \ref{thm:lasso_region} with the results in \cite{su2017false}. We prove here that all asymptotically achievable points indeed constitute $\Dlasso$, while \cite{su2017false} only showed that all Lasso paths are above the curve $q^\star$ without further specification of the achievable region nor the unachievable one. Furthermore, when it is below the DT phase transition, we separate the Case 1 and Case 2 (the Left and Middle panels in Figure \ref{fig:dt}) which is not distinguished therein. Those two diagrams are very different. The case in the Middle panel guarantees that one cannot make too many mistakes with full power, since the FDP has a non-trivial upper bound.

To elaborate on the last point, we demonstrate three more tradeoff diagrams below, focusing on the case when $n\le p$. As it is clear from Figure \ref{fig:dt_good}, our complete Lasso tradeoff diagrams show that the achievable region is relatively narrow in its vertical direction when the TPP is large (close to $1$). In the left and right panel, we see that it is above the DT phase transition in both cases, and thus there is a single value of FDP (around $21\%$ and $16\%$) when the TPP achieves its maximum. From the middle panel, we see that the range of the FDP is also narrow ($36\%\sim41\%$) when the TPP is close to $1$. We want to emphasize that according to the result in \cite{su2017false}, the lower bounds of FDP in all cases ($21\%,36\%$, and $16\%$) are the best possible value achievable when the TPP is close to its maximum. However, our complete Lasso tradeoff diagram also guarantees that it is impossible to have a much worse FDP than the best possible ones when the TPP is large.
\begin{figure}[ht]
	\centering
	\begin{subfigure}{.33\textwidth}
		\centering\includegraphics[width=\linewidth,height=0.8\linewidth]{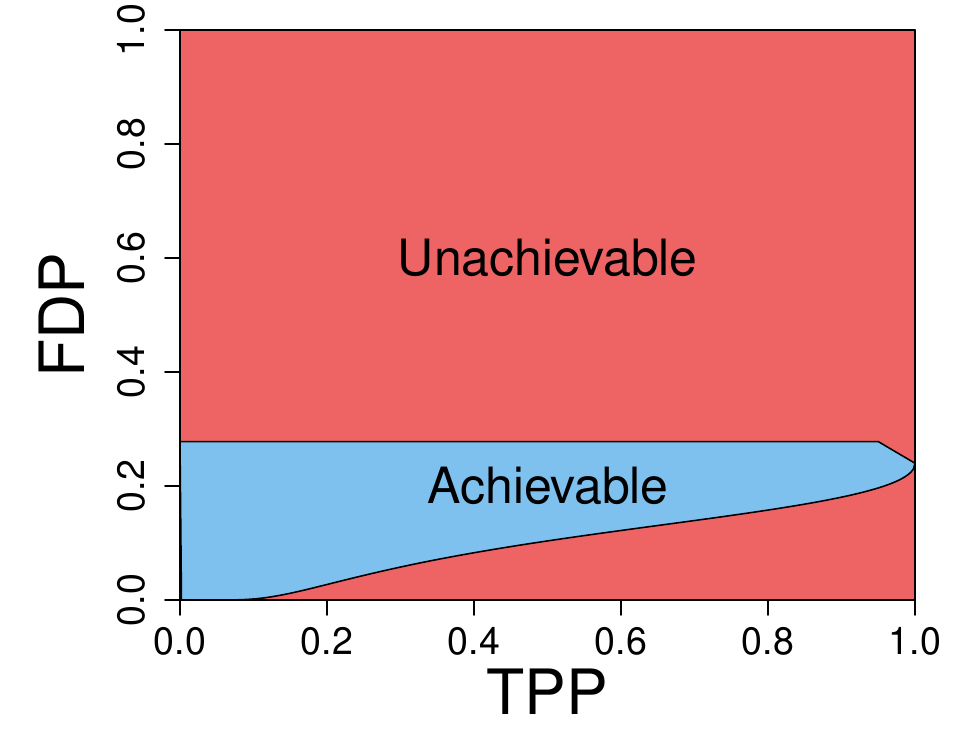}
	\end{subfigure}
	\hspace{-0.3cm}
	\begin{subfigure}{.33\textwidth}
		\centering\includegraphics[width=\linewidth,height=0.8\linewidth]{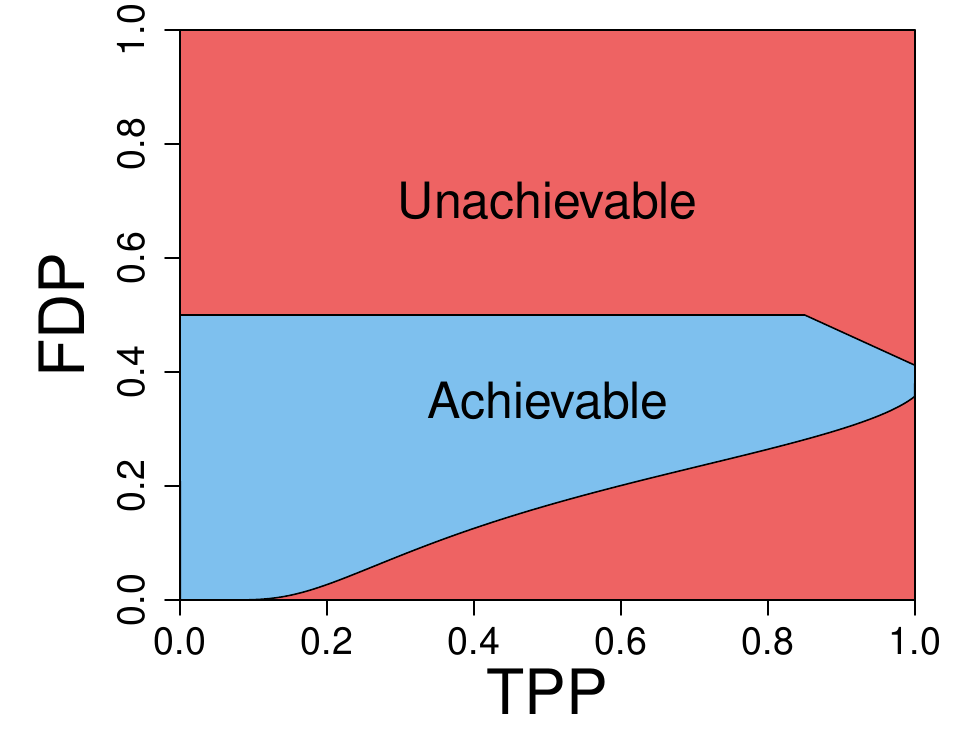}
	\end{subfigure}
	\hspace{-0.3cm}
	\begin{subfigure}{.33\textwidth}
		\centering\includegraphics[width=\linewidth,height=0.8\linewidth]{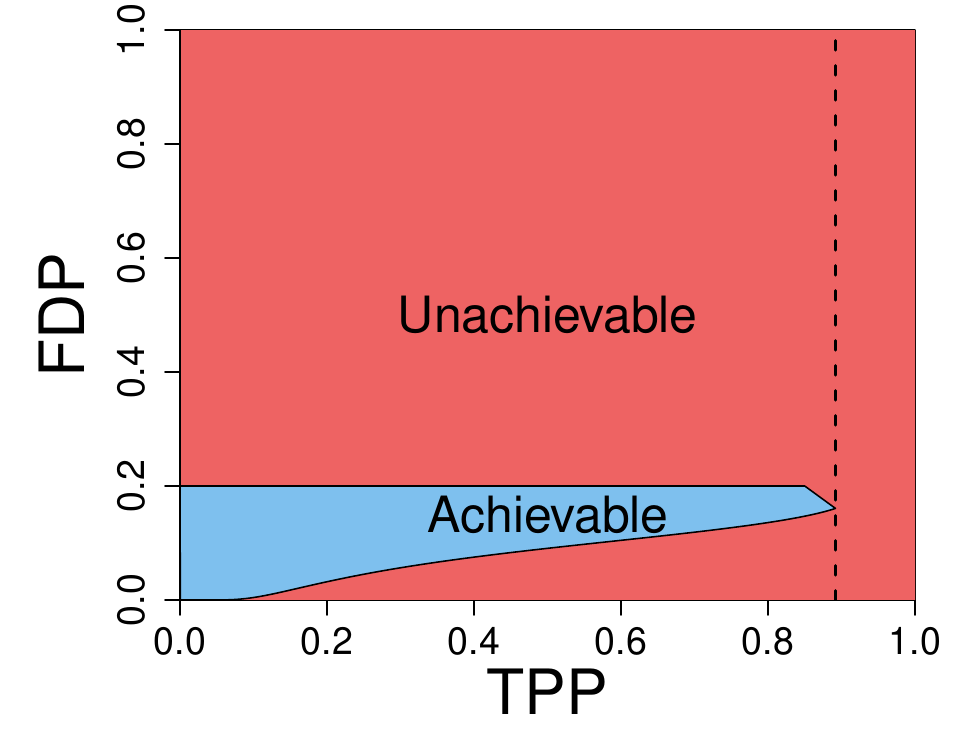}
	\end{subfigure}
	\caption{The complete Lasso tradeoff diagrams, in which high TPP guarantees low FDP. Left: sparsity ratio is $k/p=0.722$, and sampling ratio is $n/p=0.95$. The maximum FDP is around $0.21$ when the TPP is close to $1$. Middle: $k/p=0.5$, $n/p=0.85$. The maximum FDP is no more than $0.41$ when the TPP is close to $1$. Right: $k/p=0.8$, $n/p=0.85$.The maximum FDP is around $0.16$ when the TPP is close to $0.89$.}
	\label{fig:dt_good}
\end{figure}

Next, we show empirically that our Lasso diagram, though proved under Gaussian assumptions, is \textit{still} correct up to small differences on the lower boundary for a wide range of designs. For example, in Figure \ref{fig:differentdesign}, we illustrate the Lasso diagram for various designs: namely, Gaussian design with AR(0.05) covariance matrix (Left), Bernoulli design with each entry being i.i.d. Bern$(0.5)$ (Middle), and Cauchy design with each entry being Cauchy$(0, 1/n)$ (Right). In the Gaussian and Bernoulli case, our claimed region (enclosed by the black lines) is still almost exact. When the design comes from Cauchy distribution, where its mean or variance is not even well-defined, the simulation result has a higher lower boundary. This is easy to understand: the difficulty of the Cauchy design complicates the model selection problem, and the Lasso generally cannot achieve the best case as in the i.i.d. Gaussian case.
\begin{figure}[!ht]
	\centering
	\begin{subfigure}{.33\textwidth}
		\centering\includegraphics[width=\linewidth,height=0.8\linewidth]{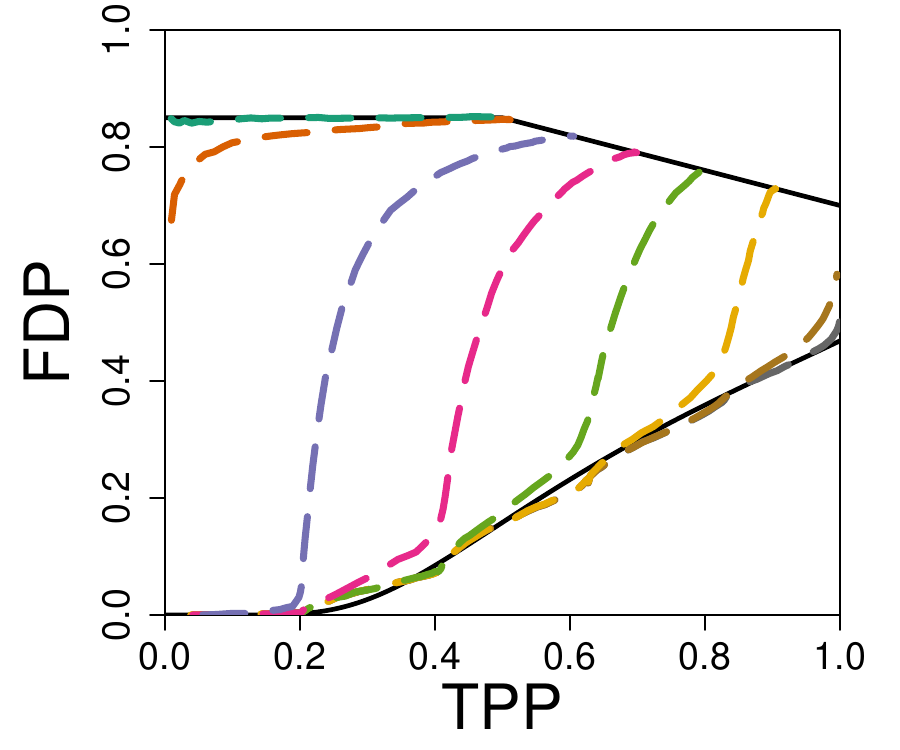}
	\end{subfigure}
	\hspace{-0.3cm}
	\begin{subfigure}{.33\textwidth}
		\centering\includegraphics[width=\linewidth,height=0.8\linewidth]{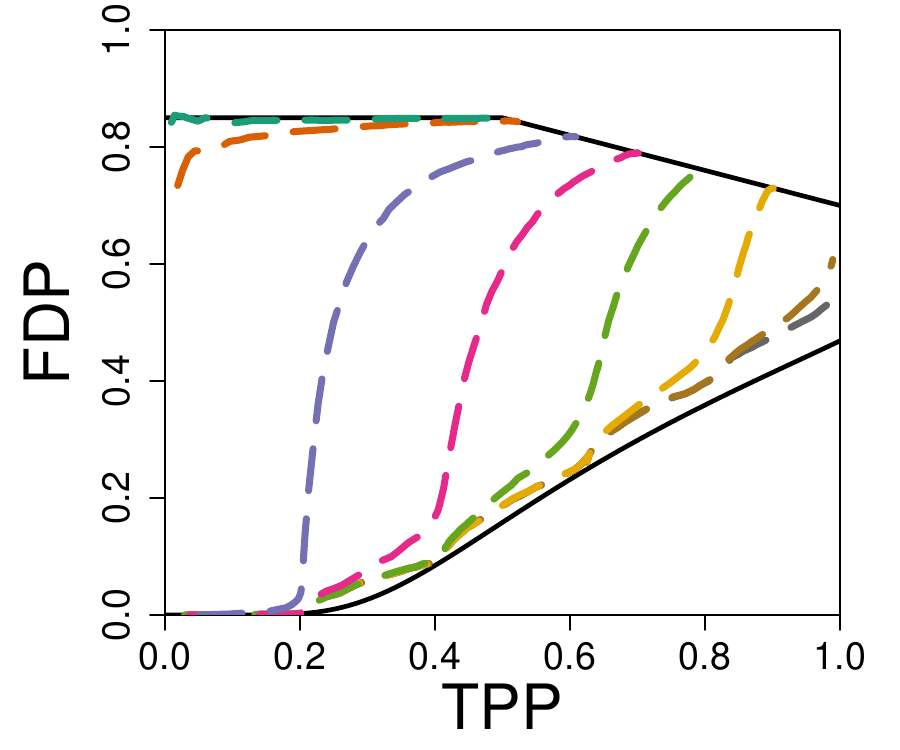}
	\end{subfigure}
	\hspace{-0.3cm}
	\begin{subfigure}{.33\textwidth}
		\centering\includegraphics[width=\linewidth,height=0.8\linewidth]{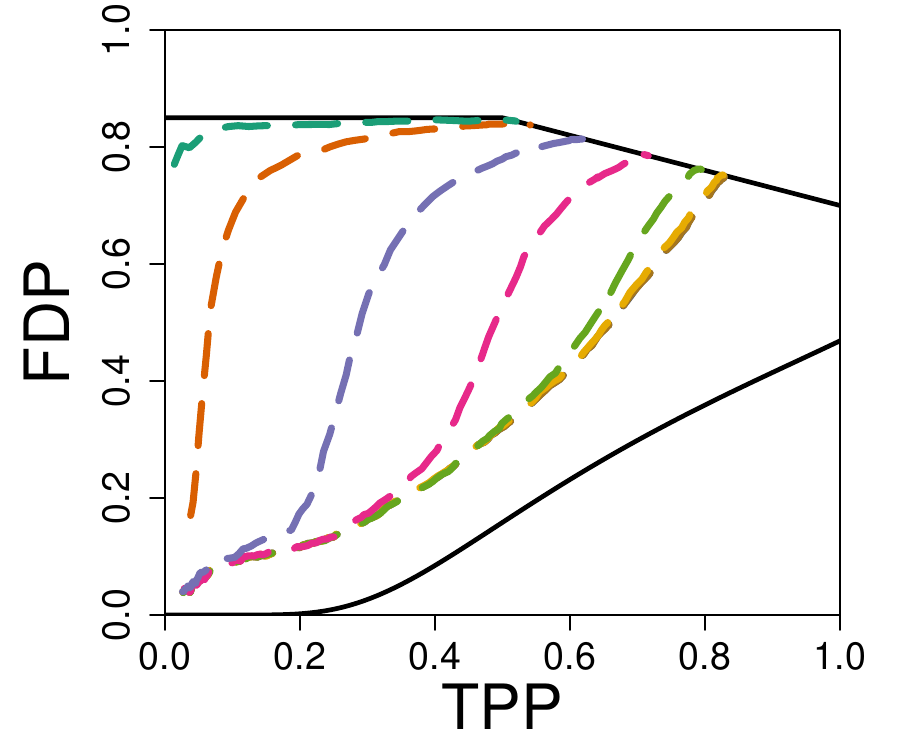}
	\end{subfigure}
	\caption{The Lasso tradeoff diagrams for non-i.i.d. or non-Gaussian designs. We fix sparsity ratio to be $k/p=0.3$, and sampling ratio to be $n/p=0.7$. The design matrix are: Gaussian design with AR(0.05) covariance matrix (Left), i.i.d. Bernoulli$(0.5)$ design (Middle), and i.i.d. Cauchy$(0, 1/n)$ design (Right).}
	\label{fig:differentdesign}
\end{figure}

Lastly, we present in Figure \ref{fig:levelplot} the level plot of the Lasso Tradeoff Diagram. In each diagram, we plot $\delta = n / p \, (x-\text{axis})$ versus $\epsilon = k / p \, (y-\text{axis})$, and fix FDP to be 0.2 (Left), 0.4 (Middle), and 0.6 (Right). The color of each point represents the largest TPP (since trivially, minimum TPP is 0) achievable (red for 0 and white for 1). We see that for large FDP, the TPP is always decrease with the sparsity ratio $\epsilon$, no matter beyond or below the DT phase transition. However, for small FDP, the maximum power first decreases with the increase of sparsity, and then increase with sparsity when above the DT phase transition. Our more refined result exactly characterizes this complication beyond DT transition. These plots, though being mathematically equivalent to Figure \ref{fig:dt}, complement to our tradeoff diagrams from a different perspective.

\begin{figure}[ht]
	\centering
	\begin{subfigure}{.33\textwidth}
		\centering\includegraphics[width=\linewidth,height=0.8\linewidth]{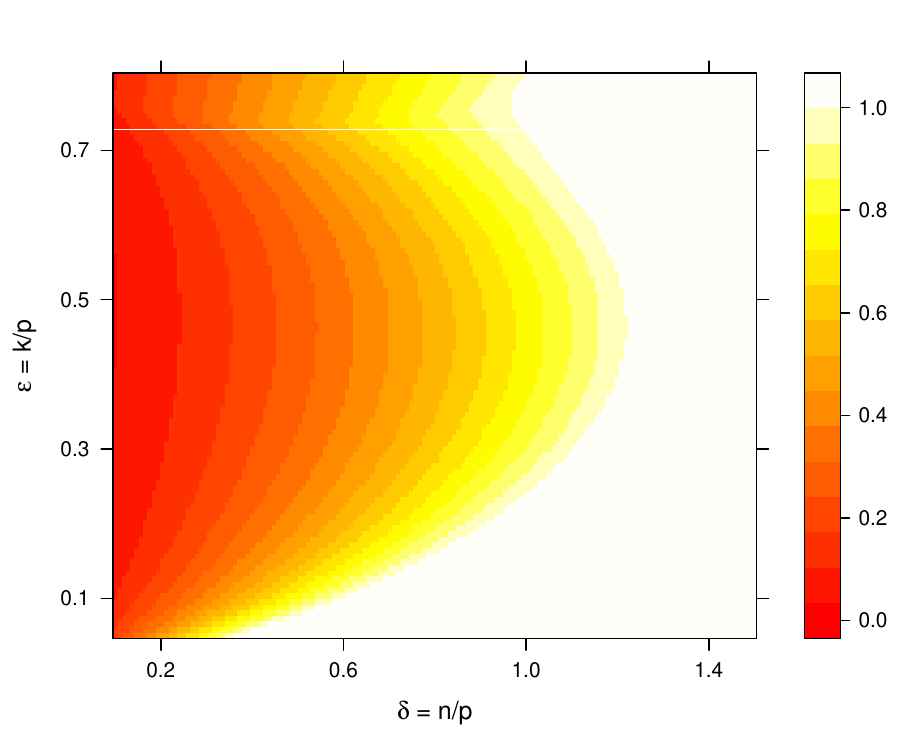}
	\end{subfigure}
	\hspace{-0.3cm}
	\begin{subfigure}{.33\textwidth}
		\centering\includegraphics[width=\linewidth,height=0.8\linewidth]{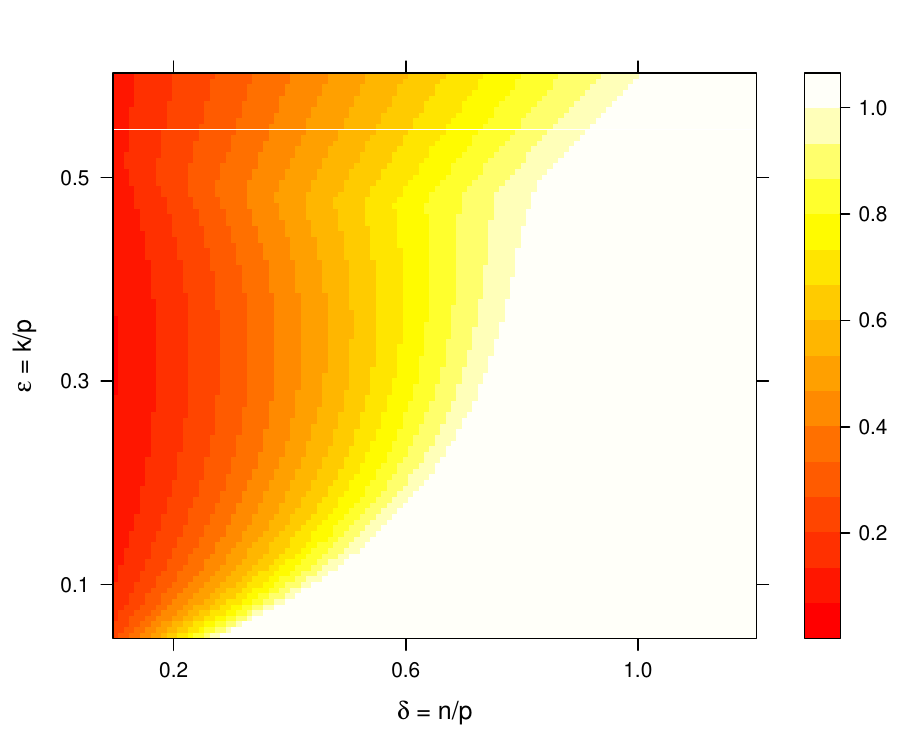}
	\end{subfigure}
	\hspace{-0.3cm}
	\begin{subfigure}{.33\textwidth}
		\centering\includegraphics[width=\linewidth,height=0.8\linewidth]{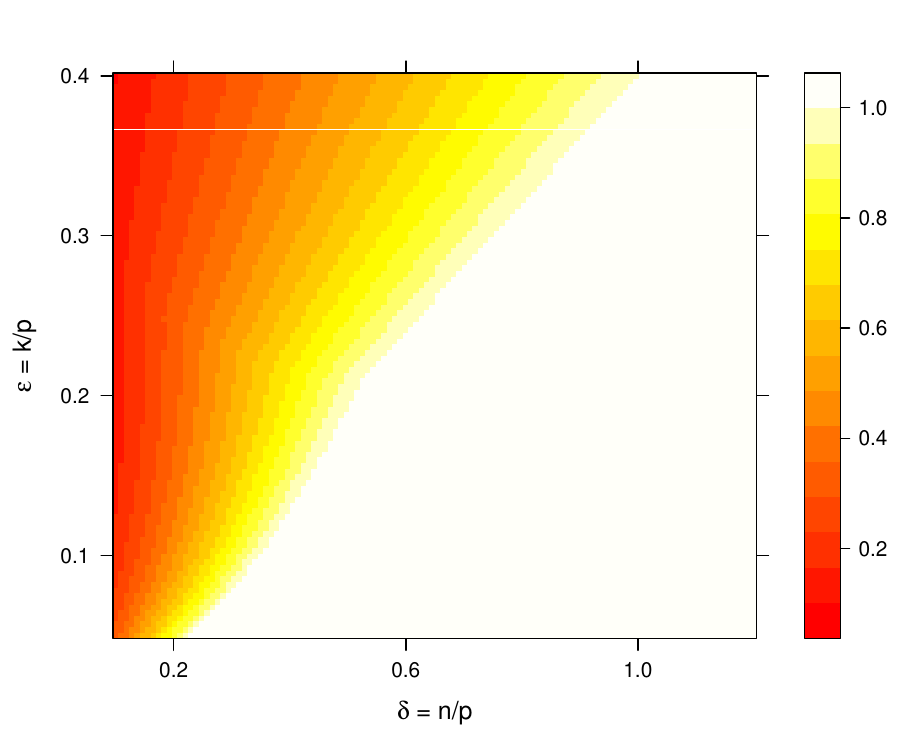}
	\end{subfigure}
	\caption{The levelplot of Lasso tradeoff diagrams. We plot $\delta = n / p \, (x-\text{axis})$ versus $\epsilon = k / p \, (y-\text{axis})$, and fix FDP to be 0.2 (Left), 0.4 (Middle), and 0.6 (Right).  The color of each point represents the largest TPP achievable.}
	\label{fig:levelplot}
\end{figure}

\end{document}